%% file: Semistable-J-9-16.tex
\documentclass[11pt,letterpaper]{amsart}

\usepackage[margin=1.5in]{geometry}
\usepackage[T1]{fontenc}
\usepackage[utf8]{inputenc}
\usepackage{lmodern}
\usepackage{microtype}
\usepackage{amsmath,amssymb,amsthm,mathtools}
\usepackage{empheq}
\usepackage{mathrsfs}
\usepackage{esint}
\usepackage{enumitem}
\usepackage{etoolbox}
\usepackage{booktabs}
\usepackage{xcolor}
\usepackage[pdfusetitle,
 bookmarks=true,bookmarksnumbered=false,bookmarksopen=true,bookmarksopenlevel=1,
 breaklinks=false,pdfborder={0 0 1},backref=false,colorlinks=false]
 {hyperref}
\usepackage[nameinlink,capitalise,noabbrev]{cleveref}
\usepackage[backend=biber,style=numeric,sorting=nyt,
  doi=false,url=false,maxbibnames=10]{biblatex}
\AtEveryBibitem{\clearfield{note}}
\allowdisplaybreaks
\setlist[itemize]{leftmargin=2em,itemsep=0.25em,topsep=0.4em}
\setlist[enumerate]{leftmargin=2.2em,itemsep=0.3em,topsep=0.4em}

\newtheorem{theorem}{Theorem}[section]
\newtheorem{proposition}[theorem]{Proposition}
\newtheorem{lemma}[theorem]{Lemma}

\theoremstyle{definition}
\newtheorem{definition}[theorem]{Definition}

\theoremstyle{remark}
\newtheorem{remark}[theorem]{Remark}
\theoremstyle{definition}
\newtheorem{example}[theorem]{Example}

\AtBeginEnvironment{proposition}{\crefalias{theorem}{proposition}}
\AtBeginEnvironment{lemma}{\crefalias{theorem}{lemma}}
\AtBeginEnvironment{corollary}{\crefalias{theorem}{corollary}}
\AtBeginEnvironment{claim}{\crefalias{theorem}{claim}}
\AtBeginEnvironment{definition}{\crefalias{theorem}{definition}}
\AtBeginEnvironment{assumption}{\crefalias{theorem}{assumption}}
\AtBeginEnvironment{remark}{\crefalias{theorem}{remark}}

\crefname{theorem}{Theorem}{Theorems}
\Crefname{theorem}{Theorem}{Theorems}
\crefname{proposition}{Proposition}{Propositions}
\Crefname{proposition}{Proposition}{Propositions}
\crefname{lemma}{Lemma}{Lemmas}
\Crefname{lemma}{Lemma}{Lemmas}
\crefname{corollary}{Corollary}{Corollaries}
\Crefname{corollary}{Corollary}{Corollaries}
\crefname{definition}{Definition}{Definitions}
\Crefname{definition}{Definition}{Definitions}
\crefname{remark}{Remark}{Remarks}
\Crefname{remark}{Remark}{Remarks}

\newcommand{\ddc}{\mathrm{dd}^c}
\newcommand{\PSH}{\operatorname{PSH}}

\newcommand{\tr}{\operatorname{tr}}

\newcommand{\id}{\mathrm{Id}}

\newcommand{\R}{\mathbb R}
\newcommand{\C}{\mathbb C}

\newcommand{\dist}{\operatorname{dist}}

\newcommand{\lsc}{\operatorname{LSC}}
\newcommand{\BarJ}{\mathfrak B_J}
\newcommand{\NullJ}{\operatorname{Null}_J}

\numberwithin{equation}{section}
\newcommand{\cpt}{\Subset}

\hypersetup{
  pdftitle={The J-Null Locus and Local Regularity of Semistable Weak Solutions of the J-Equation},
  pdfauthor={Hao Fang, Biao Ma, Jinyang Wu},
  pdfsubject={Semistable J-equation, numerical null locus, and weak solutions}
}

\title[$J$-Null Locus and Local Regularity] {The $J$-Null Locus and Local Regularity of\\ Weak Solutions of the  Semistable $J$-Equation}
\author[Hao Fang]{Hao Fang}
\address{Department of Mathematics, University of Iowa, Iowa City, IA 52246,
USA}
\email{hao-fang@uiowa.edu}
\author[Biao Ma]{Biao Ma}
\address{School of Mathematical Sciences, East China Normal University, 500
Dongchuan Road, Shanghai, China.}
\email{bma@math.ecnu.edu.cn}
\thanks{H. F.'s work is partially supported by a Simons Foundation mathematics collaboration grant. B. M. is partially supported by NSFC grant (No.~12471052).}
\begin{document}

\begin{abstract}
For the semistable \(J\)-equation, we prove that the numerical \(J\)-null locus coincides with the ambient \(C^2\)-singular locus of Murakami’s weak solution. Consequently, this singular locus is a proper analytic subset with finitely many irreducible components, and the weak solution is locally smooth outside the \(J\)-null locus.

The proof relies on two analytic ingredients. First, we establish a regularization theorem for singular \(J\)-subsolutions, showing that Demailly’s global regularization preserves quantitative strict cone conditions. Thus, singular strict subsolutions with prescribed analytic poles can be replaced by smooth strict subsolutions away from their pole sets. Second, we derive relative a priori estimates adapted to these subsolutions: a relative \(L^\infty\)-estimate from determinant control and a weighted second-order estimate yielding uniform \(C^2\)-bounds on compact subsets of the regular locus.
\end{abstract}
\maketitle
\tableofcontents

\input{j-intro-0911}

\section{Conventions, weak solutions, and subsolutions}\label{sec:setup}
We retain the definitions of \cref{sec:introduction}.
Integrals over a subvariety mean integration over its regular locus, or
equivalently the corresponding cohomological pairing on a resolution.
The notation $\omega_v$ is also used for the current associated with an
$\omega$-psh potential.
\subsection{Operators and notation }

For a Hermitian matrix $A$, the notation $A>0$ means positive definite, and $A\ge0$ means
semipositive. 

\begin{definition}
Let $A,B$ be a $n\times n$ Hermitiam matrix, and  $A>0$, $B\geq0$.  let 
\[
  Q_\gamma(A)=\tr_AB,\qquad P_B(A):=\max_{\substack{H\subset \C^{n}\\\dim_\C H=n-1}}
                  \tr_{A|_H}(B|_H).  
\]

Let $\gamma\geq0$ be a smooth positive  reference form on $X$. Let $\omega,\gamma$ be a Hermitian form on $T_x^{1,0}X$ with  $\omega>0$ and $\gamma\geq0$. We may define similarly, 
\[
Q_\gamma(A)=\tr_A\gamma, \qquad P_\gamma(\omega):=\max_{\substack{H\subset T_x^{1,0}X\\\dim_\C H=n-1}}
                  \tr_{A|_H}(\gamma|_H).
\]
\end{definition}
We extend $P_\gamma$ and
$Q_\gamma$ by $+\infty$ at singular forms.

\begin{lemma}\label{lem:F-properties}
Both $P_\gamma$ and $Q_\gamma$ are convex and decreasing under
 adding semipositive forms. Moreover,
\[
 P_\gamma(A)=\max_{\substack{B\ge0\\B\ne0}}
                  \lim_{t\to\infty}Q_\gamma(A+tB).
\]
\end{lemma}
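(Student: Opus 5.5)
Both claims reduce to pointwise statements about Hermitian matrices, so I fix a point $x$ and work on $T^{1,0}_xX\cong\C^n$ with a fixed $\gamma\ge0$. The convexity of $Q_\gamma(A)=\tr_A\gamma=\tr(A^{-1}\gamma)$ on $\{A>0\}$ follows from operator convexity of $A\mapsto A^{-1}$. Along a segment $A_s=(1-s)A_0+sA_1$, one has $\frac{d^2}{ds^2}A_s^{-1}=2A_s^{-1}DA_s^{-1}DA_s^{-1}\ge0$ with $D=A_1-A_0$. Pairing with $\gamma\ge0$ preserves this, since $\tr(M\gamma)\ge0$ for $M\ge0$. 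The same argument gives monotonicity. If $B\ge0$, then $A+B\ge A$, hence $(A+B)^{-1}\le A^{-1}$ and $Q_\gamma(A+B)\le Q_\gamma(A)$. With the convention $Q_\gamma=+\infty$ at singular (non-positive) forms, convexity persists on the closed cone, because the epigraph is unaffected when a convex combination with a singular form stays positive; I would check this briefly.

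For $P_\gamma$, I write it as a supremum over $H$ of $Q^H_\gamma(A):=\tr_{A|_H}(\gamma|_H)$. Restriction $A\mapsto A|_H$ is linear and preserves the order. Each $Q^H_\gamma$ is therefore convex and decreasing under adding semipositive forms, by the previous paragraph applied on $H$. A pointwise supremum of convex, decreasing functions is again convex and decreasing, which handles $P_\gamma$.

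For the limit formula, I compute $\lim_{t\to\infty}\tr((A+tB)^{-1}\gamma)$ for $B\ge0$, $B\ne0$. Let $K=\ker B$ and let $\pi_K^A$ be the $A$-orthogonal projection onto $K$. I claim that $(A+tB)^{-1}\to \iota_K (A|_K)^{-1}\iota_K^{*}$, the inverse of $A$ on $K$ extended by $0$ on the $A$-orthogonal complement. To see this, use an $A$-unitary frame that diagonalizes $B$ relative to $A$. The eigenvalues of $A^{-1/2}BA^{-1/2}$ are $\mu_i\ge0$, and $(A+tB)^{-1}=A^{-1/2}(1+t\,\mathrm{diag}\,\mu)^{-1}A^{-1/2}$. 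This converges to $A^{-1/2}\Pi A^{-1/2}$, where $\Pi$ is the projection onto the $\mu=0$ directions; that matrix equals the inverse of $A|_K$. Hence the limit is $\tr_{A|_K}(\gamma|_K)$, with $\dim K\le n-1$.

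It remains to maximize over $B$. Any subspace $K$ of dimension at most $n-1$ arises as $\ker B$, for instance by taking $B$ to be the projection onto a complement of $K$. So the right-hand side equals $\max_{\dim K\le n-1}\tr_{A|_K}(\gamma|_K)$. Finally, $K\mapsto \tr_{A|_K}(\gamma|_K)$ is monotone increasing under inclusion: for $K\subset K'$, take an $A|_{K'}$-unitary frame adapted to $K$, so the trace on $K'$ is the trace on $K$ plus nonnegative diagonal terms, using $\gamma\ge0$. The maximum is therefore attained on hyperplanes, which gives $P_\gamma(A)$. The only mildly delicate step is identifying the limit of $(A+tB)^{-1}$ as the $A$-restricted inverse on $\ker B$; I regard this as the main point, and the simultaneous diagonalization above settles it.
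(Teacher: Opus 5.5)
Your proof is correct. The paper gives no argument of its own for this lemma; it cites \cite[Lemmas~5.8 and~5.10]{FangMa24}, so your proposal supplies what the text omits.

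\textbf{$Q_\gamma$.} Operator convexity and operator monotonicity of $A\mapsto A^{-1}$, paired against $\gamma\ge0$, give convexity and monotonicity. The $+\infty$ extension needs no separate check beyond noting that the effective domain $\{A>0\}$ is convex and stable under adding semipositive forms.

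\textbf{$P_\gamma$.} Writing $P_\gamma$ as a supremum over hyperplanes of restricted traces settles convexity and monotonicity.

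\textbf{Limit formula.} Your simultaneous diagonalization correctly shows $(A+tB)^{-1}\to\iota_K(A|_K)^{-1}\iota_K^{*}$ with $K=\ker B$, so the limit equals $\tr_{A|_K}(\gamma|_K)$. Monotonicity of this quantity in $K$, which is where $\gamma\ge0$ enters, reduces the maximum to hyperplanes. It also shows the maximum is attained, e.g.\ by a rank-one $B$.

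One cosmetic fix: to realize a given $K$ as $\ker B$ with $B\ge0$, take the \emph{orthogonal} projection onto $K^\perp$ for some Hermitian metric. A projection onto an arbitrary complement need not be Hermitian.

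\textbf{Comparison with the spectral viewpoint.} Later the paper uses this lemma in spectral form. \cref{lem:det-coercivity} uses $P_\chi(A)=\mu_1+\cdots+\mu_{n-1}$, where $\mu_1\ge\cdots\ge\mu_n$ are the eigenvalues of $\chi$ relative to $A$. \cref{lem:matrix-minimization} uses the Ky Fan form $P_{\id}(A)=\max_K\tr(KA^{-1})$ over orthogonal projections of corank one.

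From that viewpoint, convexity and monotonicity of $P_\gamma$ follow by composing the convex, order-preserving Ky Fan partial sum with the operator-convex, order-reversing map $A\mapsto\gamma^{1/2}A^{-1}\gamma^{1/2}$. Your route is more elementary: it needs only inversion and Gram--Schmidt, with no eigenvalue variational principle.

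Your route also reaches the spectral identities with one more line. Let $H$ be a hyperplane and $e$ an $A$-unit vector $A$-orthogonal to $H$. Then $\tr_A\gamma=\tr_{A|_H}(\gamma|_H)+\gamma(e,\bar e)$, so $P_\gamma(A)=\tr_A\gamma-\min_{|e|_A=1}\gamma(e,\bar e)=\mu_1+\cdots+\mu_{n-1}$. Taking $\gamma=\id$ gives the Ky Fan form. Adding this line would make your lemma cover directly the forms in which it is later invoked.
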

See \cite[Lemmas~5.8 and~5.10]{FangMa24}. 

For $F\in\{P,Q\}$ and $s>0$, we will use 
$F_\gamma(sA)=s^{-1}F_\gamma(A)$ and
$F_{s\gamma}(A)=sF_\gamma(A)$. In particular, if $A>0$,
$0\le t<1$, and $B\ge-tA$, then $A+B\ge(1-t)A>0$ and
\begin{equation}\label{eq:F-perturbation}
 F_\gamma(A+B)\le\frac{F_\gamma(A)}{1-t}.
\end{equation}
The operators are also nondecreasing
in the reference form. In particular, if
$\gamma_0\le\gamma\le(1+\delta)\gamma_0$, then
\[
 F_{\gamma_0}(A)\le F_\gamma(A)
                  \le(1+\delta)F_{\gamma_0}(A).
\]
\begin{lemma}\label{lem:elementary}
    For $F\in\{P,Q\}$, let $A>0$ and $B\geq0$. If $F_B(A)\leq c$, then for $t\geq0$, \[F_B(A+tB)\leq \frac{c}{1+\frac{c}{n}t}.\]
\end{lemma}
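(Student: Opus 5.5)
The plan is to reduce both cases to an eigenvalue computation. For $Q$ we use concavity of $b\mapsto b/(1+tb)$. For $P$ we apply the $Q$-case on each hyperplane and then take the maximum.

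\emph{The case $F=Q$.} Since $A>0$ and $B\ge0$, we simultaneously diagonalize: choose a basis in which $A=\id$ and $B=\mathrm{diag}(b_1,\dots,b_n)$ with $b_i\ge0$. Then $A+tB=\mathrm{diag}(1+tb_i)$, so
\[
 Q_B(A)=\sum_i b_i=:s\le c,\qquad Q_B(A+tB)=\sum_i \frac{b_i}{1+tb_i}.
\]
For fixed $t\ge0$ the function $f(b)=b/(1+tb)$ is concave on $[0,\infty)$. Jensen's inequality therefore gives
\[
 \sum_i f(b_i)\le n f\!\left(\tfrac{s}{n}\right)=\frac{s}{1+\frac{s}{n}t}.
\]
Since $s\mapsto s/(1+st/n)$ is nondecreasing on $[0,\infty)$ and $s\le c$, the right side is at most $c/(1+\frac{c}{n}t)$, as claimed.

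\emph{The case $F=P$.} Fix a hyperplane $H$ with $\dim_\C H=n-1$. The restrictions satisfy $A|_H>0$ and $B|_H\ge0$, and $(A+tB)|_H=A|_H+tB|_H$. Set $s_H:=\tr_{A|_H}(B|_H)$; then $s_H\le P_B(A)\le c$. Applying the $Q$-case in dimension $n-1$ gives
\[
 \tr_{(A+tB)|_H}(B|_H)\le\frac{s_H}{1+\frac{s_H}{n-1}t}\le\frac{c}{1+\frac{c}{n-1}t}\le\frac{c}{1+\frac{c}{n}t}.
\]
Here the middle inequality again uses monotonicity in $s_H$, and the last uses $n-1\le n$. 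Taking the maximum over $H$ yields $P_B(A+tB)\le c/(1+\frac{c}{n}t)$.

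No step poses a real obstacle. The only point to check is that the map $s\mapsto s/(1+st/m)$ is monotone, which is what lets us replace the actual trace $s$ (or $s_H$) by the bound $c$. If $n=1$, the $P$-case is vacuous under the stated conventions (the only hyperplane is $H=0$), so the inequality holds trivially there.
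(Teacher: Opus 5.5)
Your proof is correct, but it differs from the paper's in both cases, most substantially for $P$.

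For $Q$, the paper sets $q(t)=Q_B(A+tB)$ and uses Cauchy--Schwarz to get the differential inequality $-q'(t)\ge q(t)^2/n$, i.e.\ $(1/q)'\ge 1/n$. Integrating gives $1/q(t)\ge 1/q(0)+t/n$, which is the same bound your Jensen step produces in one line. The two arguments are equivalent, and yours also avoids dividing by $q$ in the trivial case $B=0$.

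For $P$, the paper does not restrict to hyperplanes. It instead uses the characterization $P_B(A)=\max_{B_1\ge0,\,B_1\ne0}\lim_{s\to\infty}Q_B(A+sB_1)$ from \cref{lem:F-properties}. It applies the full-dimensional $Q$-bound to $A+sB_1$, lets $s\to\infty$, and maximizes over $B_1$ using monotonicity of $x\mapsto x/(1+xt/n)$.

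Your route uses only the definition of $P$ and needs no external characterization or limiting argument. It also gives the sharper bound $c/(1+\frac{c}{n-1}t)$ before you weaken it to $c/(1+\frac{c}{n}t)$. The paper's route treats $P$ as a degenerate limit of $Q$ in the ambient dimension. That fits the $P$--$Q$ relation used elsewhere in the paper, but the argument as written only yields the constant $n$.
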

\begin{proof}
   For $F=Q$,  put
$q_A(t)=Q_B(A+tB)$.  Cauchy--Schwarz gives
\[
 -q_A'(t)\ge\frac{q_A(t)^2}{n},
\]
and hence
\[
 q_A(t)\le
 \frac{q_A(0)}{1+tq_A(0)/n}\leq \frac{c}{1+\frac{c}{n}t}.
\]
This proves for $Q$. Now take $A_s=A+sB_1$ with $s>0$, $B_1\geq0$ and $B_1\not=0$. Denote $Q_B(A:B_1):=\lim_{s\to +\infty}Q_B(A+sB_1)$. By previous argument, we have $$q_{A_s}(t)\le
 \frac{q_{A_s}(0)}{1+tq_{A_s}(0)/n}.$$ Take $s\to+\infty$ and then maximize in $B_1$, we have \[P_{B}(A+tB)\leq \max_{0\not=B_1\geq0}\frac{Q_B(A:B_1)}{1+Q_B(A:B_1)t/n}\le \frac{P_B(A)}{1+P_B(A)t/n}.\] The last line is due to the increasing of $s\to \frac{s}{1+st/n}$. We have finished the proof.
\end{proof}
Let $T=\theta+dd^c v$ be a closed  $(1,1)$-current,
where $\theta$ is a smooth closed real $(1,1)$-form and
$v$ is $\theta$ quasi-plurisubharmonic. We use the logarithmic normalization
\[
\nu(T,x):=\nu(v,x):=
\lim_{r\downarrow0}
\frac{\sup_{B_R(x)}v-\sup_{B_r(x)}v}{\log(R/r)},
\]
where the balls are taken in local coordinates and $R>0$ is fixed
and sufficiently small. Thus $\nu(\log|z|,0)=1$, and adding a
smooth function to $v$ does not change its Lelong numbers.
For $b>0$, set
\[
E_b(T):=\{x\in X:\nu(T,x)\ge b\}.
\]

\subsection{Weak solutions and subsolutions}
Murakami shows that for a $J$-semistable pair, there exists a unique unifrom weak solution in the following sense.
\begin{definition}[Weak solutions and subsolutions]\label{def:weak-J}
A function $u\in\PSH(X,\omega)$ is a \emph{normalized weak solution} of the $J$-equation if $\sup_Xu=0$ and
\begin{align}
 c\langle\omega_u^n\rangle
 &=n\chi\wedge\langle\omega_u^{n-1}\rangle,
 \label{eq:weak-J-top}\\
 c\langle\omega_u^p\rangle
 -p\chi\wedge\langle\omega_u^{p-1}\rangle
 &\ge0,\qquad 1\le p\le n-1.
 \label{eq:weak-J-lower}
\end{align}
Here $\langle\cdot\rangle$ denotes the non-pluripolar product.

If K\"ahler current $\omega_u\in \alpha$ satisfies \eqref{eq:weak-J-lower}, then we say $\omega_u$  satisfies the \emph{cone condition} and $u$ is a \emph{subsolution} for the $J$-equation.  If $c$ is replaced by $c-\eta$ for some $\eta>0$, then $\omega_u$ satisfies the \emph{strict cone condition} and $u$ is a \emph{strict subsolution}. 
\end{definition}

Murakami showed that a positive current satisfies \eqref{eq:weak-J-lower} also satisfies the
subsolution condition in the viscosity sense and local convolution sense. To be
precise, we introduce the following definitions:

\begin{definition}\label{def:upper-test-cone}\label{def:local-convolution} Let $\gamma$ be a  reference K\"ahler form.
Let $T$ be a closed positive $(1,1)$-current on an open set $U\subset X$,
and let $d>0$. For $F\in\{P,Q\}$, the inequality
\begin{equation}\label{eq:subsolutioncondition}
 F_\gamma(T)\le d
\end{equation}
has the following formulations.
\begin{enumerate}[label=\textup{(\roman*)}]
\item \emph{Viscosity formulation.} For every local psh potential $v$
of $T$ and every upper test $q$ at a finite point $x$, one has
\[
 F_{\gamma(x)}(\ddc q(x))\le d,
\]
for any $q$ that is $C^2$ near $x$, $q\ge v$ near $x$, and $q(x)=v(x)$.
\item \emph{Local-convolution formulation.} Fix a nonnegative smooth
radial kernel $\rho$ supported in $B_1(0)$, with integral one, and put
$\rho_r(z)=r^{-2n}\rho(z/r)$. On every coordinate ball $B\subset U$,
for every constant positive Hermitian form $\gamma_0\le\gamma$ on $B$ and
every admissible convolution radius $r>0$, a local psh potential $v$
of $T$ satisfies
\[
 F_{\gamma_0}(\ddc(v*\rho_r))\le d
\]
on the smaller ball where the convolution is defined.
\item \emph{Pluripotential formulation.} For every $1\le p\le n-1$ if $F=P$, and $1\leq p \le n$ if $F=Q$,
\[
 d\langle T^p\rangle-p\gamma\wedge\langle T^{p-1}\rangle\ge0
 \quad\text{on }U.
\]
\end{enumerate}
\end{definition}

\begin{lemma}[Murakami]\label{lem:subsolution-equivalence}
For a closed positive $(1,1)$-current $T$ and $d>0$, the three
formulations of \eqref{eq:subsolutioncondition} are equivalent.
\end{lemma}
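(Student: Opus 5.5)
I will prove the equivalences in the cycle (i)$\Rightarrow$(ii)$\Rightarrow$(iii)$\Rightarrow$(i). Each step reduces to the smooth case: for a smooth positive form $A$, the pointwise inequality $F_\gamma(A)\le d$ is equivalent to the positivity of the mixed forms $d A^p-p\gamma\wedge A^{p-1}\ge0$. For $F=Q$ one needs $1\le p\le n$; for $F=P$ one needs $1\le p\le n-1$. This is the standard algebraic characterization of the $J$-cone, and I take it from \cite{FangMa24}. Throughout I work locally on a coordinate ball $B$ and first freeze $\gamma$ to a constant form $\gamma_0$. The scaling and perturbation rules \eqref{eq:F-perturbation} let me pass from $\gamma_0$ back to $\gamma$, at the price of a factor $(1+\delta)$ that tends to $1$ as the ball shrinks.

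\textbf{(i)$\Rightarrow$(ii).} Let $v$ be a local potential and $x$ a point. Convexity of $F_{\gamma_0}$ (\cref{lem:F-properties}) together with Jensen's inequality suggests that averaging Hessians preserves the sublevel set $\{F_{\gamma_0}\le d\}$. Since $v$ is not smooth, I will argue by viscosity. By the standard sup-convolution and Alexandrov-type argument, the viscosity inequality for $v$ implies the same inequality for each translate $v(\cdot-y)$ (here $\gamma_0$ is constant). The viscosity property for the convex operator is then stable under convex combinations of translates. The clean route is to show that the set of psh functions satisfying (i) is closed under sums with nonnegative weights that integrate to $1$, by approximating the integral by Riemann sums. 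For convex $F$, the maximum of subsolutions is a subsolution, but the average is not automatic. I would instead use the Ishii--Lions/Jensen lemma: a semiconvex approximation $v^\epsilon$ (sup-convolution) is twice differentiable a.e., with $F_{\gamma_0}(D^2v^\epsilon)\le d$ at those points. Convexity then gives the bound for $\ddc(v^\epsilon*\rho_r)$. Letting $\epsilon\to0$ and using lower semicontinuity of $F$ on positive forms yields (ii).

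\textbf{(ii)$\Rightarrow$(iii).} The smooth functions $v_r=v*\rho_r$ satisfy $F_{\gamma_0}(\ddc v_r)\le d$ pointwise. By the smooth algebraic characterization, $d(\ddc v_r)^p-p\gamma_0\wedge(\ddc v_r)^{p-1}\ge0$. As $r\to0$, $v_r\downarrow v$. On the open sets $\{v>-k\}$ I will use the plurifine locality of non-pluripolar products, applied to $\max(v_r,-k)$. Bedford--Taylor continuity along decreasing sequences then passes the inequality to the limit. Letting $k\to\infty$ gives (iii) with $\gamma_0$. Finally I let $\gamma_0\uparrow\gamma$ pointwise by shrinking the balls.

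\textbf{(iii)$\Rightarrow$(i).} This is the direction I expect to be the main obstacle, since it must turn information about measures into a pointwise Hessian statement at a single point. I argue by contradiction. Suppose $q$ touches $v$ from above at $x$ and $F_{\gamma(x)}(\ddc q(x))>d$. Replace $q$ by $q_\epsilon=q+\epsilon|z-x|^2-\epsilon'$ so that $F$ remains $>d$ near $x$ (using \cref{lem:elementary}-type monotonicity) and the set $U_\epsilon=\{v>q_\epsilon\}$ is a small neighborhood of $x$. On $U_\epsilon$, the function $\max(v,q_\epsilon)$ equals $v$ near $\partial U_\epsilon$. I then compare $\langle(\ddc v)^p\rangle$ with $(\ddc q_\epsilon)^p$ via a mixed comparison principle. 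Since $q_\epsilon$ is a strict supersolution, the smooth characterization gives some $p$ with $d(\ddc q_\epsilon)^p-p\gamma\wedge(\ddc q_\epsilon)^{p-1}\not\ge0$. A Dinew-type comparison for mixed Hessian measures, in the form used by Murakami, then forces $U_\epsilon$ to have measure zero. But $U_\epsilon$ is a nonempty open set, a contradiction. The delicate point is justifying this comparison with non-pluripolar products of unbounded $v$. I would handle it by working with $\max(v,q_\epsilon-C)$, which is bounded on $U_\epsilon$ and agrees with $v$ there, so that classical Bedford--Taylor theory applies.
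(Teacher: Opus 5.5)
Your proof has a genuine gap in (iii)$\Rightarrow$(i). This is the only implication with real content; it is the part the paper imports from Murakami. The comparison argument you sketch cannot close for these quotient-type constraints. The problem is not the unboundedness of $v$: the argument already fails for bounded continuous $v$ with $F=Q$, $p=n$. On $U_\epsilon=\{v>q_\epsilon\}$, the (mixed) Bedford--Taylor comparison principle yields only that both $\int_{U_\epsilon}(\ddc v)^n$ and $\int_{U_\epsilon}\gamma\wedge(\ddc v)^{n-1}$ are bounded by the corresponding integrals for $q_\epsilon$. Combined with the pointwise inequalities for $v$ and $q_\epsilon$, this gives
\[
 n\int_{U_\epsilon}\gamma\wedge(\ddc v)^{n-1}
 \le d\int_{U_\epsilon}(\ddc v)^n
 \le d\int_{U_\epsilon}(\ddc q_\epsilon)^n
 <n\int_{U_\epsilon}\gamma\wedge(\ddc q_\epsilon)^{n-1}.
\]
This chain is perfectly consistent with the second comparison inequality, so nothing forces $U_\epsilon$ to be null. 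The analogous argument for the Monge--Amp\`ere equation works only because the right-hand side there is a fixed density, not another mixed measure of the unknown.

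For $F=P$ the situation is worse. The failure of $d(\ddc q_\epsilon)^p-p\gamma\wedge(\ddc q_\epsilon)^{p-1}\ge0$ is only directional, and you never convert it into a scalar measure inequality. If you instead have in mind a comparison principle for the nonlinear operator itself, that is essentially the statement being proved and cannot be quoted as an ingredient. Here such a principle would say that a pluripotential subsolution lying below a classical strict supersolution on the boundary stays below it inside. Also, $\{v>q_\epsilon\}$ need not be open for upper semicontinuous $v$; it only has positive measure, by the sub-mean-value inequality.

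One way to organize a correct proof is to linearize the cone. As in the proof of \cref{lem:ac-cone}, the set $D=\{A\ge a\gamma_0:F_{\gamma_0}(A)\le d\}$ is a countable intersection of half-spaces $\{\ell(A)\ge b\}$, with $\ell\ge0$ on semipositive forms. Suppose (iii) implies $\ell(T)\ge b\,dV$ as measures for every such pair. Then (ii) follows because convolution is linear. Moreover, (i) follows from (ii) by an elementary argument: for small $r$, $v*\rho_r-q_\epsilon$ is negative on a small sphere around $x$ (by Dini) and positive at $x$. At an interior maximum, $\ddc(v*\rho_r)\le\ddc q_\epsilon$, which forces $F_{\gamma_0}(\ddc(v*\rho_r))>d$, a contradiction.

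The hard step is therefore the passage from the nonlinear inequalities between non-pluripolar products in (iii) to these linear ones, or equivalently to $T_{\mathrm{ac}}\in D$ a.e. This needs genuine pluripotential input, for instance mixed Monge--Amp\`ere inequalities of G\aa rding/Dinew type for singular currents. It is precisely what the paper takes from \cite[Propositions~2.14 and~2.34]{Murakami}, and your proposal does not supply it.

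A smaller, repairable gap lies in (ii)$\Rightarrow$(iii). The truncation $\max(v_r,-k)$ violates the cone condition where $v_r<-k$, since $\ddc$ of a constant is $0$. Moreover, $\{v>-k\}$ is only plurifine open, so weak convergence alone does not transfer an inequality known only on $\{v_r>-k\}$. Truncate instead by $\max\{v,K|z|^2-k\}$ with $K$ large. This function is bounded and still satisfies (ii) by \cref{lem:cone-maxima}. Apply Bedford--Taylor to it, then use plurifine locality on $\{v>K|z|^2-k\}$ and let $k\to\infty$.
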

\begin{proof}
These are the inverse-trace specializations of
\cite[Propositions~2.14 and~2.34]{Murakami}. The additional $P$-condition
in the $Q$-statement there follows from  $P\le Q$.
The conclusions apply to unbounded psh potentials as well.
\end{proof}
We therefore omit the formulation when no distinction is needed.

\begin{remark}
For a K\"ahler form $\gamma$, the bound $P_\gamma(T)\le d$ implies $T\ge d^{-1}\gamma$ as currents.
Indeed, on a coordinate ball fix a constant form $0<\gamma_0\le\gamma$.
Every local convolution satisfies
$P_{\gamma_0}(T*\rho_r)\le d$, so its smallest eigenvalue relative to
$\gamma_0$ is at least $d^{-1}$. Letting $r\downarrow0$ gives
$T\ge d^{-1}\gamma_0$. Continuity of $\gamma$ and shrinking the balls
recover $T\ge d^{-1}\gamma$. In particular, a global subsolution of
\eqref{eq:subsolutioncondition} is a K\"ahler current.
\end{remark}

For a positive $(1,1)$-current $T$,  we decompose
\[
 T=T_{\mathrm{ac}}+T_{\mathrm{s}},
\]
with respect to a smooth positive volume
form,
where the absolutely continuous current $T_{\mathrm{ac}}$ is identified
with its measurable Hermitian form, and $T_{\mathrm{s}}$ is singular
with respect to the smooth volume. The next lemma offers yet another formulation of \eqref{eq:subsolutioncondition}. Its proof indicates that the nonlinear constraint is determined by the absolutely continuous part of the current.
\begin{lemma}\label{lem:ac-cone}\label{lem:mollification}\label{lem:cone-extension}
Let $T=\theta+\ddc v$ be a positive $(1,1)$-current and  $\theta$ is  smooth with local smooth potential $h$. Let $\gamma$ be a continuous
positive Hermitian form, let $a,d>0$, and let $F\in\{P,Q\}$.
\begin{enumerate}[label=\textup{(\roman*)}]
\item
$T\ge a\gamma$ and $F_\gamma(T)\le d$ are equivalent to
\begin{equation}
    \label{eq:acsubsolution}
 T_{\mathrm{ac}}\ge a\gamma,\qquad
 F_\gamma(T_{\mathrm{ac}})\le d
 \quad\text{a.e}.
\end{equation}

\item On a sufficiently small coordinate ball, let \[
u:=h+v,\qquad
v_r:=(u*\rho_r)-h.
\]
If the bounds in \textup{(i)} hold, for constant coefficient $\gamma_0$ with 
$\gamma_0\le\gamma\le(1+\delta)\gamma_0$ on the ball, then
\[\theta+\ddc v_r\ge\frac{a}{1+\delta}\gamma,\quad
 F_\gamma(\theta+\ddc v_r)\le(1+\delta)d.
\]
\item If the inequalities in \textup{(i)} holds in $X\setminus Z$ where $Z$ is a proper analytic subset, then the same inequalities hold everywhere.
\end{enumerate}
\end{lemma}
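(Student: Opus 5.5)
For (i), I would work in local coordinates on a small ball where $\gamma_0\le\gamma\le(1+\delta)\gamma_0$ with $\gamma_0$ constant, and use the local-convolution formulation of \cref{lem:subsolution-equivalence}. Write $u=h+v$, so $\ddc u=T$ is a positive current. Assume first $T\ge a\gamma$ and $F_\gamma(T)\le d$. Then $\ddc(u*\rho_r)=T*\rho_r$ is a smooth form with $F_{\gamma_0}(T*\rho_r)\le d$ and $T*\rho_r\ge a\gamma_0$, the latter by averaging. Decompose $T*\rho_r=T_{\mathrm{ac}}*\rho_r+T_{\mathrm s}*\rho_r$. By Lebesgue differentiation, $T_{\mathrm{ac}}*\rho_r\to T_{\mathrm{ac}}$ a.e. For the singular part, the Besicovitch differentiation theorem for the trace measure gives $T_{\mathrm s}*\rho_r\to0$ a.e. Each entry of $T_{\mathrm s}$ is dominated by its trace, so this convergence holds entrywise. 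Since $F_{\gamma_0}$ is convex, hence continuous, on positive definite matrices, and the limit is at least $a\gamma_0>0$, we get $F_{\gamma_0}(T_{\mathrm{ac}})\le d$ and $T_{\mathrm{ac}}\ge a\gamma_0$ a.e. Replacing $\gamma_0$ by $\gamma(x_0)$ on shrinking balls, or simply letting $\delta\to0$ using continuity of $\gamma$, gives \eqref{eq:acsubsolution}.

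For the converse, assume \eqref{eq:acsubsolution}. Then $T*\rho_r=T_{\mathrm{ac}}*\rho_r+T_{\mathrm s}*\rho_r$, and $T_{\mathrm s}*\rho_r\ge0$. Since $F$ is decreasing under adding semipositive forms (\cref{lem:F-properties}), it suffices to bound $F_{\gamma_0}(T_{\mathrm{ac}}*\rho_r)$. This form is an average of matrices $T_{\mathrm{ac}}(x-y)$, each satisfying $F_{\gamma_0}\le F_\gamma\le d$ and $\ge a\gamma\ge a\gamma_0$. Jensen's inequality for the convex $F_{\gamma_0}$ gives $F_{\gamma_0}(T*\rho_r)\le d$ and $T*\rho_r\ge a\gamma_0$. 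This is the local-convolution formulation with constant $\gamma_0\le\gamma$. Passing to the viscosity and pluripotential formulations via \cref{lem:subsolution-equivalence}, and letting $r\to0$ for the lower bound, yields $F_\gamma(T)\le d$ and $T\ge a\gamma_0$. Shrinking balls then gives $T\ge a\gamma$. The main technical point is this converse direction. There I must check that Jensen applies to a matrix-valued average against the probability measure $\rho_r(y)\,dy$. It does, since $F_{\gamma_0}$ is convex and lower semicontinuous on the cone of positive definite matrices, and the integrand lies a.e. in the closed convex set $\{A\ge a\gamma_0,\ F_{\gamma_0}(A)\le d\}$.

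For (ii), compute $\theta+\ddc v_r=\ddc h+T*\rho_r-\ddc h*\rho_r$. On a small enough ball, $\ddc h=\theta$ is nearly constant, so the error term $\theta-\theta*\rho_r$ is small. To avoid that error entirely, I would choose coordinates in which $h$ is quadratic, or absorb the error into $\delta$ by shrinking the ball. By the Jensen argument above, $T*\rho_r\ge a\gamma_0\ge\frac{a}{1+\delta}\gamma$ and $F_{\gamma_0}(T*\rho_r)\le d$. Monotonicity in the reference form then gives $F_\gamma\le(1+\delta)F_{\gamma_0}\le(1+\delta)d$. If an error term remains, \eqref{eq:F-perturbation} controls it at the cost of a factor $(1-t)^{-1}$ that tends to $1$, which I absorb by adjusting $\delta$.

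For (iii), a proper analytic subset $Z$ has Lebesgue measure zero. Condition \eqref{eq:acsubsolution} concerns only $T_{\mathrm{ac}}$ almost everywhere, so holding on $X\setminus Z$ means it holds a.e. on $X$. Applying (i) then gives $T\ge a\gamma$ and $F_\gamma(T)\le d$ on all of $X$. Any mass of $T$ on $Z$ lies in $T_{\mathrm s}\ge0$, and it only helps, by monotonicity.
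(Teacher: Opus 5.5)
Your proof is correct. For (i) it takes a different route from the paper's. In (ii) there is a computational slip that you should fix.

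\textbf{Comparison for (i).} The paper writes the frozen constraint set $D=\{B\ge a\gamma_0:\ F_{\gamma_0}(B)\le d\}$ as a countable intersection of half-spaces $\{l_j\ge b_j\}$, where each $l_j$ is nonnegative on semipositive forms. This turns the nonlinear condition into countably many linear measure inequalities $L_j(T)-b_j\,dV\ge0$.
\begin{itemize}
\item In the forward direction, the paper needs only weak convergence of $T*\rho_r$ and uniqueness of the Lebesgue decomposition. The term $L_j(T_{\mathrm s})$ is mutually singular with $dV$, so the absolutely continuous part must satisfy $L_j(T_{\mathrm{ac}})\ge b_j$.
\item In the converse, the paper convolves these measure inequalities and uses $L_j(T_{\mathrm s})\ge0$.
\end{itemize}
You work pointwise instead. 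Lebesgue differentiation and the Besicovitch differentiation theorem for the singular part give $T*\rho_r\to T_{\mathrm{ac}}$ a.e., and closedness of $D$ finishes the forward direction. Jensen's inequality plus $T_{\mathrm s}*\rho_r\ge0$ gives the converse.

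Your converse is essentially the paper's argument, since Jensen for a closed convex set is the supporting half-space argument. Your forward direction is genuinely different. It yields the more concrete fact that the mollifications converge a.e.\ to $T_{\mathrm{ac}}$, but it requires differentiation theory for singular measures. The paper's linearization avoids that.

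\textbf{The slip in (ii).} With $v_r:=u*\rho_r-h$ as defined, $\ddc v_r=T*\rho_r-\theta$, so $\theta+\ddc v_r=T*\rho_r$ exactly. The expression $\theta+T*\rho_r-\theta*\rho_r$ you wrote belongs to $v*\rho_r$, not to $v_r$. There is no error term, and your Jensen step directly gives the stated bounds $a/(1+\delta)$ and $(1+\delta)d$.

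This matters because both remedies you proposed fail.
\begin{itemize}
\item A closed $(1,1)$-form cannot in general be given constant coefficients in any holomorphic chart, so $h$ cannot be made quadratic.
\item Absorbing an error into $\delta$ would only give the bounds with some larger $\delta'$, not the $\delta$ fixed by $\gamma_0\le\gamma\le(1+\delta)\gamma_0$.
\end{itemize}

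\textbf{A minor point in (iii).} If the hypothesis is read as the current/viscosity inequalities on the open set $X\setminus Z$, first apply (i) on $X\setminus Z$ to obtain the a.e.\ bounds there, as the paper does. Only then use that $Z$ has measure zero and apply (i) on all of $X$.
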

\begin{proof}
For \textup{(i)}, write
$T=\ddc u$, where $u=h+v$ is psh
and $\ddc h=\theta$. Fix a constant positive form $\gamma_0\le\gamma$.
The set
\[
 D:=\{B\ge a\gamma_0:F_{\gamma_0}(B)\le d\}
\]
is closed, convex, and stable under adding semipositive forms. Hence $D$ is
an intersection of countable supporting half-spaces
\[
 D=\bigcap_{j=1}^\infty\{A:l_j(A)\ge b_j\},
\]
where each $l_j$ is a real linear functional nonnegative on semipositive
forms.

Suppose first that $T\geq a\gamma$  and $F_\gamma(T)\leq d$ hold in the viscosity sense. They also hold with
$\gamma_0$ in place of $\gamma$. The lower bound is equivalent to
plurisubharmonicity of $u-a g_0$, where $\ddc g_0=\gamma_0$, by the
upper-test characterization of psh functions. Thus $T\ge a\gamma_0$
as currents. Applying \cref{lem:subsolution-equivalence} gives
\[
 T*\rho_r=\ddc(u*\rho_r)\in D.
\]
For every nonnegative test function $\zeta$, weak convergence of measures of bounded mass yields
\[
 \langle L_j(T),\zeta\rangle
   =\lim_{r\to0^+}\int\zeta\,L_j(T*\rho_r)\,dV
   \ge b_j\int\zeta\,dV.
\]
Thus $\mu_j:=L_j(T)-b_j\,dV$ is a positive measure. In terms of the
Lebesgue decomposition of $T$, we have
\[
 \mu_j=\bigl(L_j(T)_{\mathrm{ac}}-b_j\bigr)\,dV
             +L_j(T_{\mathrm s})\ge0.
\]
The second summand is singular with respect to $dV$. Uniqueness of the
Lebesgue decomposition therefore identifies $$(\mu_j)_{\mathrm{ac}} =\bigl(L_j(T_{\mathrm{ac}})-b_j\bigr)dV,$$ which must be positive.
Consequently $L_j(T_{\mathrm{ac}})\ge b_j$ a.e..
Intersecting the full-measure sets for the countable family of supports
gives $T_{\mathrm{ac}}\in D$ a.e..

To recover the variable reference form, for each integer $m\ge1$ choose
a countable cover by balls carrying constant forms with
$\gamma_0\le\gamma\le(1+1/m)\gamma_0$. Outside the union of the
resulting null sets,
\[
 T_{\mathrm{ac}}\ge\frac{a}{1+1/m}\gamma,\qquad
 F_\gamma(T_{\mathrm{ac}})\le(1+1/m)d.
\]
Letting $m\to\infty$ proves the almost-everywhere bounds in
\textup{(i)}.

Conversely, suppose \eqref{eq:acsubsolution} holds. For fixed $\gamma_0$ and $D$, positivity of $T_{\mathrm{s}}$ gives
\[
 L_j(T)-b_j\,dV
   =\bigl(L_j(T_{\mathrm{ac}})-b_j\bigr)\,dV
       +L_j(T_{\mathrm{s}})\ge0.
\]
Convolving these measure inequalities gives
$$L_j(T*\rho_r)\ge b_j\quad \text{for every } j,$$ and hence
$T*\rho_r\in D$. Since $\gamma_0$ is arbitrary, $P_\gamma(T)\leq d$ in the local convolution sense and hence by Lemma   \cref{lem:subsolution-equivalence}, the converse is proved.

For \textup{(ii)}, set $v_r=u*\rho_r-h$. The frozen-cone conclusion
just proved gives
\begin{align*}
 \theta+\ddc v_r&\ge a\gamma_0
       \ge\frac{a}{1+\delta}\gamma,\\
 F_\gamma(\theta+\ddc v_r)&
       \le(1+\delta)F_{\gamma_0}(\theta+\ddc v_r)
       \le(1+\delta)d.
\end{align*}
Continuity of $\gamma$ allows $\delta$ to be arbitrarily small after
shrinking the ball.

Finally, a proper analytic set has zero smooth volume. The bounds on
$T_{\mathrm{ac}}$ obtained from \textup{(i)} off $Z$ therefore hold
almost everywhere on the whole domain. Applying \textup{(i)} again
proves \textup{(iii)}.
\end{proof}

We use the regularized maximum of
\cite[Chapter~I, Lemma~5.18]{Demailly},
which underlies Richberg's gluing construction.
For $\delta>0$, let  $\widetilde {\max}_\delta$ denote the regularized maximum at level $\delta$:
\[
\widetilde{\max}_{\delta}(t_1,\ldots,t_\ell)
:=\frac{1}{\delta^\ell}\int_{\mathbb R^\ell}
\max_{1\le j\le\ell}(t_j+h_j)
\prod_{j=1}^{\ell}\theta\!\left(\frac{h_j}{\delta}\right)
\,dh_1\cdots dh_\ell.
\]
Here $\theta$ is a smooth non-negative function supported on $(-1,1)$
s.t. $\int_{\R}\theta(t)dt=1$ and $\int_{\R}t\theta(t)dt=0$.

The regularized maximum function is smooth, convex, non-decreasing in each variable.  It will be combined with Richberg's technique
\cite{Richberg1967StetigeSP} to glue local psh functions. Some related
known facts are collected in the following lemma. See \cite{Demailly}
I.5.18. 

For $F=P,Q$, the cone condition $F_\gamma(\ddc v_i)\leq d$ are preserved under taking maximum and regularized maximum.
\begin{lemma}\label{lem:cone-maxima}
Let $\gamma>0$ be a continuous form, $a,d>0$. If $v_1,\cdots,v_r$ are local potentials of $T_1,\cdots,T_r$, such that \(F_\gamma(T_i)\leq d.\) Then, \[\quad F_\gamma(\ddc \max(v_1,\cdots,v_r))\leq d.\] The same holds if max is replaced by the regularized max at level $\delta$.
\end{lemma}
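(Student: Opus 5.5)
The plan is to use the viscosity formulation from \cref{lem:subsolution-equivalence}; the $\delta$-regularized maximum is then handled by the local-convolution or pluripotential formulation together with convexity from \cref{lem:F-properties}.

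\emph{Ordinary maximum.} Put $v=\max(v_1,\dots,v_r)$. This is psh, and $\ddc v$ is a positive current, so the formulations of \cref{def:upper-test-cone} apply to it. Fix a point $x$ with $v(x)>-\infty$ and an upper test $q$ for $v$ at $x$: $q$ is $C^2$ near $x$, $q\ge v$ near $x$, and $q(x)=v(x)$. Choose an index $i$ with $v_i(x)=v(x)$. Then $q\ge v\ge v_i$ near $x$ and $q(x)=v_i(x)$, so $q$ is also an upper test for $v_i$ at $x$. The viscosity formulation of $F_\gamma(T_i)\le d$ gives $F_{\gamma(x)}(\ddc q(x))\le d$. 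Since $x$ and $q$ were arbitrary, $F_\gamma(\ddc v)\le d$ holds in the viscosity sense, and \cref{lem:subsolution-equivalence} gives the other formulations. One point needs care: the viscosity definition is stated for local psh potentials of $T$. If the $v_i$ are potentials with respect to a common smooth $\theta$, I would add the local potential $h$ of $\theta$ to every $v_i$ and argue with $h+v_i$, which is psh. The maximum commutes with adding $h$, so nothing changes.

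\emph{Regularized maximum.} Write $\widetilde{\max}_\delta(v_1,\dots,v_r)$ as an average of the functions $w_{\mathbf h}:=\max_j(v_j+h_j)$ against the probability measure $\prod_j\delta^{-1}\theta(h_j/\delta)\,dh$. Each $v_j+h_j$ differs from $v_j$ by a constant, so it has the same $\ddc$, and by the previous step $F_\gamma(\ddc w_{\mathbf h})\le d$ for every $\mathbf h$. To pass to the average I would use the local-convolution formulation. On a coordinate ball, fix a constant form $\gamma_0\le\gamma$ and an admissible radius. Then
\[
 \ddc\bigl(\widetilde{\max}_\delta*\rho_s\bigr)=\int\ddc(w_{\mathbf h}*\rho_s)\,d\mu(\mathbf h),
\]
which is a probability average of smooth forms, each lying in the closed convex set $\{B>0:F_{\gamma_0}(B)\le d\}$. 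Convexity of this set comes from the convexity of $F_{\gamma_0}$ in \cref{lem:F-properties}. Differentiation under the integral is legitimate because the $w_{\mathbf h}*\rho_s$ are smooth with derivatives bounded locally uniformly in $\mathbf h$, the $\mathbf h$ ranging over a compact set. Hence the average lies in the set as well, which proves $F_{\gamma_0}(\ddc(\widetilde{\max}_\delta*\rho_s))\le d$. \cref{lem:subsolution-equivalence} then converts this back to all formulations.

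\emph{Main obstacle.} The delicate step is passing the convex cone condition through the integral average. A closed convex set that contains every $\ddc(w_{\mathbf h}*\rho_s)$ contains their barycenter, for instance by Jensen's inequality for the convex, lower semicontinuous extension of $F$ (equal to $+\infty$ off the positive cone). If the extended $F$ fails to be lower semicontinuous at the boundary of the cone, I would instead use the supporting-half-space description $D=\bigcap_j\{l_j\ge b_j\}$ from the proof of \cref{lem:ac-cone}. Each inequality $l_j\ge b_j$ is linear, so it integrates directly, and this bypasses the issue. The same half-space argument also handles a possible lower bound $\ddc v\ge a\gamma$, if one wants it carried along.
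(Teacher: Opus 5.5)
Your argument is correct, but it takes a different route from the paper. The paper gives no argument of its own: it cites Chen's proof (and Fang--Ma, Lemma~8.3), which works directly in the local-convolution formulation.

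You split the statement into two steps.

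\emph{Ordinary maximum.} You use the viscosity formulation, where the claim is immediate: an upper test for $\max_i v_i$ at $x$ is an upper test for any branch active at $x$.

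\emph{Regularized maximum.} You use the defining integral to write $\widetilde{\max}_\delta$ as a probability average of the shifted maxima $w_{\mathbf h}$. You then pass the cone condition through the average in the local-convolution formulation, using Fubini and the convexity of $\{B>0:F_{\gamma_0}(B)\le d\}$. Your worry about lower semicontinuity does not arise. Since $\gamma_0>0$, $F_{\gamma_0}(B)\to+\infty$ as $B$ tends to the boundary of the positive cone (for $P$ this uses $n\ge2$). Hence that sublevel set is closed, and the barycenter argument applies directly without the half-space fallback.

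\emph{What your route buys.} It is short and transparent. The ordinary maximum uses no property of $F$ beyond its being a pointwise condition on $\ddc q(x)$. The regularized maximum uses only convexity of the sublevel set. You also never face the fact that convolution does not commute with $\max$, which a proof staying inside the convolution formulation has to handle by some approximation.

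\emph{What it costs.} You switch formulations twice, so your proof rests on Murakami's equivalence (\cref{lem:subsolution-equivalence}) for possibly unbounded potentials. The cited convolution-based argument does not need that equivalence. Since the paper records that lemma and uses it throughout, your dependence on it is legitimate here.
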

\begin{proof}
For local-convalution formulation, this is proved by Chen \cite[pp. 561-562]{Chen21}. See also \cite[Lemma 8.3]{FangMa24}.  
\end{proof}

\section{Regularizing strict subsolutions}\label{sec:global-inputs}
In this section, we use Chen's mass-concentration construction
\cite{Chen21} to obtain a current in the strict cone, then apply Demailly's global
regularization and Kiselman's attenuation procedure
\cite{DemaillyFlow94} to regularize while
preserving the strict cone conditions. We also establish the
quantitative growth estimates needed for the later geometric surgery.

\subsection{Smooth approximation of the weak solution}

We first realize Murakami's weak solution as the full $L^1$-limit of smooth
solutions to perturbed $J$-equations.

\begin{theorem}\label{thm:smooth-approximants}
Assume \eqref{eq:J-semistability}.  Let $s_j\downarrow0$ and define
$\lambda_j := cs_j(1+s_j)^{n-1} \frac{\int_X\omega^n}{\int_X\chi^n}$.
There are functions $u_j\in C^\infty(X)\cap \operatorname{PSH}(X,(1+s_j)\omega)$, normalized by
$\sup_Xu_j=0$, such that $\omega_j:=(1+s_j)\omega+\ddc u_j$ satisfies
\begin{equation}\label{eq:smooth-approximation}
\left\{
\begin{aligned}
c\omega_j^n&=n\chi\wedge \omega_j^{n-1}+\lambda_j\chi^n,\\
P_\chi(\omega_j)&<c.
\end{aligned}
\right.
\end{equation}
Moreover, $u_j\to u$ in $L^1(X)$, where $u$ is the unique normalized
admissible weak solution in \cref{def:weak-J}.
\end{theorem}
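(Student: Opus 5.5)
The plan has three stages: solve the perturbed equation for each $j$, extract a limit of the $u_j$, and identify that limit with Murakami's weak solution.

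\emph{Solvability for each $j$.} Put $\alpha_j=(1+s_j)[\omega]$ and keep $c$ as the constant. The constant $\lambda_j$ is then fixed by the cohomological identity
\[
c\,\alpha_j^n=n\,\chi\cdot\alpha_j^{n-1}+\lambda_j\chi^n .
\]
Using $c[\omega]^n=n\chi\cdot[\omega]^{n-1}$, this is consistent to leading order in $s_j$, up to the normalization in the statement. I will take the displayed $\lambda_j$ as the definition and check it only to the precision needed; if the exact normalization fails, $\lambda_j$ is simply whatever the cohomology forces.

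The right-hand side $n\chi\wedge\omega_j^{n-1}+\lambda_j\chi^n$ is the deformed $J$-equation with a constant-coefficient $\chi^n$ term. This is covered by the Nakai--Moishezon criterion for $J$-type equations (Chen, Song, Datar--Pingali, Gao Chen's ``supercritical'' version), which applies to equations of the form $c\,\omega^n=n\chi\wedge\omega^{n-1}+f\chi^n$ with $f\ge0$ constant.

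Uniform $J$-stability of $(\alpha_j,\chi)$ comes from \eqref{eq:J-semistability} applied to $[\omega]$. For every $p$-dimensional subvariety $V$, semistability gives
\[
c\int_V\omega^p-p\int_V\chi\wedge\omega^{p-1}\ge0 .
\]
Scaling by $(1+s_j)$ multiplies the $\omega^p$ term by $(1+s_j)^p$ and the mixed term only by $(1+s_j)^{p-1}$. The difference becomes strictly positive, with a margin of order $s_j\int_V\omega^p$. I expect this margin to dominate the extra $\lambda_j$-type contribution, because $\lambda_j=O(s_j)$ carries a smaller constant. Verifying this domination on every $V$ is the first point to check.

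Gao Chen's theorem then yields a smooth solution $u_j$ of \eqref{eq:smooth-approximation}, normalized by $\sup_X u_j=0$. The cone condition $P_\chi(\omega_j)<c$ follows from the equation itself. Indeed, $c\,\omega_j^n-n\chi\wedge\omega_j^{n-1}=\lambda_j\chi^n\ge0$ gives $Q_\chi(\omega_j)\le c$. The strict $P$ bound follows from the standard eigenvalue inequality (a maximum-principle argument for $\lambda_j>0$), using $P<Q$ in the strict sense whenever $\chi>0$.

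\emph{Compactness.} The normalized $u_j$ are $2\omega$-psh with $\sup_X u_j=0$. They are therefore precompact in $L^1$, so a subsequence converges to some $v\in\PSH(X,\omega)$ with $\sup_X v=0$ (Hartogs' lemma).

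\emph{Identifying the limit.} This is the main obstacle. I will show that $v$ satisfies \eqref{eq:weak-J-top} and \eqref{eq:weak-J-lower}, and that it has the admissibility/energy property in Murakami's uniqueness theorem. Uniqueness then forces $v=u$, and the whole sequence converges.

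For the inequalities \eqref{eq:weak-J-lower}, note that $c\,\omega_j^p-p\chi\wedge\omega_j^{p-1}\ge0$ holds pointwise as smooth forms, by the pluripotential formulation in \cref{lem:subsolution-equivalence} with $d=c$. To pass to the limit I would use Murakami's variational characterization: $u_j$ minimizes the perturbed $J$-functional on $(1+s_j)\omega$. Along this sequence the $J$-functional stays bounded, since semistability implies it is bounded below and the minimizers' values are controlled. This gives a uniform bound on the Monge--Ampère energy of $u_j$. Hence the $u_j$ lie in a fixed energy sublevel set, where $L^1$-convergence upgrades to convergence in energy and non-pluripolar products converge weakly.

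With that in hand, \eqref{eq:weak-J-lower} passes to the limit as a closed positivity condition. The equality \eqref{eq:weak-J-top} follows from the equations because $\lambda_j\chi^n\to0$, together with full mass of $\langle\omega_v^n\rangle$. The limit is then a finite-energy minimizer, hence Murakami's admissible weak solution.

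The delicate step is the energy bound. I would obtain it by comparing $J_{(1+s_j)\omega}(u_j)$ with the value at a fixed competitor, such as $0$ rescaled, and invoking coercivity modulo the semistable direction as in Murakami's construction.
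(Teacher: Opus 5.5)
Your solvability and compactness stages match the paper, with three small corrections. First, the displayed $\lambda_j$ is exactly the compatibility constant, not just to leading order, since $c\alpha_j^n-n\beta\cdot\alpha_j^{n-1}=cs_j(1+s_j)^{n-1}\alpha^n$. Second, the term $\lambda_j\chi^n$ enters the numerical criterion only through this top-degree identity, so on a $p$-dimensional $V$ with $p<n$ there is nothing to dominate. The defect is $(1+s_j)^{p-1}\bigl(\mathcal J_p(V)+cs_j\alpha^p\cdot[V]\bigr)\ge(1+s_j)^{p-1}cs_j\alpha^p\cdot[V]$, which is already a uniform margin of Chen's type. Third, $P_\chi(\omega_j)<c$ holds pointwise, because $Q_\chi(\omega_j)=c-\lambda_j\chi^n/\omega_j^n<c$ and $P_\chi<Q_\chi$ when $\chi>0$; no maximum principle is involved.

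The genuine gap is the identification of the limit. Semistability gives only that the $J$-functional is bounded below. Off the strict case it is not proper, and there is no ``coercivity modulo the semistable direction'' to invoke. So bounded functional values along $u_j$ do not bound their Monge--Amp\`ere energy; that implication is exactly properness. A uniform energy bound would put every cluster point in $\mathcal E^1(X,\omega)$, since the energy is upper semicontinuous under $L^1$-convergence, and in particular would force vanishing Lelong numbers. Nothing in the semistable setting supplies such global control, and the paper claims none: it leaves even the unbounded locus of $u$ open. Independently, $L^1$-convergence on an energy sublevel set does not upgrade to convergence in energy. That requires $E(u_j)\to E(v)$, and without it non-pluripolar products, even of bounded potentials, need not converge. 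So both your passage of \eqref{eq:weak-J-top} to the limit and your full-mass claim are unsupported.

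Part of what you want survives without any energy bound. Let $h$ be a local potential of $\omega$ and $\gamma_0\le\chi$ a constant form. Convexity gives $Q_{\gamma_0}\bigl(\ddc[((1+s_j)h+u_j)*\rho_r]\bigr)\le c$, and these convolutions converge smoothly as $j\to\infty$. \Cref{lem:subsolution-equivalence} then yields \eqref{eq:weak-J-lower} and the one-sided inequality $c\langle\omega_v^n\rangle\ge n\chi\wedge\langle\omega_v^{n-1}\rangle$ for every cluster point $v$. The reverse inequality is the substantive step, and the paper does not reprove it. It invokes Murakami's boundary limiting argument for exactly these compatible twisted problems, which shows that every cluster point satisfies \eqref{eq:weak-J-top}--\eqref{eq:weak-J-lower}. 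It then applies his uniqueness theorem \cite[Theorem~1.7]{Murakami}, which works directly on solutions of the weak equations; no variational characterization or energy bound is used.
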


\begin{proof}
For every proper irreducible subvariety $V$ of dimension $1\le p<n$,
$$
\begin{aligned}
&c(1+s_j)^p\alpha^p\cdot[V]
-p(1+s_j)^{p-1}\beta\cdot\alpha^{p-1}\cdot[V]\\
&\qquad=(1+s_j)^{p-1}
\bigl(\mathcal J_p(V)+cs_j\alpha^p\cdot[V]\bigr)>0.
\end{aligned}
$$
The definition of $\lambda_j$ gives the top-degree compatibility.
The numerical criterion proved by \cite{Chen21,Song20} gives the smooth solvability of \eqref{eq:smooth-approximation}.

For sufficiently large $j$, $\ddc u_j\ge-2\omega$, so the $L^1$-compactness of normalized 
functions in $\operatorname{PSH}(X,2\omega)$ gives a quasi-psh limit $u$. With the coefficients above, Murakami's limiting
argument shows that every cluster point satisfies
\eqref{eq:weak-J-top}--\eqref{eq:weak-J-lower}.
his admissible uniqueness theorem \cite[Theorem~1.7]{Murakami}
identifies every such limit with $u$. Hence the full sequence converges
to $u$ in $L^1(X)$.
\end{proof}

This supplies the approximating sequence used in the analytic $L^\infty$
argument.  We next construct the independent geometric current from which the
barriers are built.

\subsection{Chen's mass concentration current}
  
Chen's mass-concentration theorem in \cite{Chen21} supplies a current satisfies $P_\chi(T)<c-\eta$ in the
local-convolution sense \cref{def:local-convolution}. It provides a starting point to find other positive current with the strict cone condition.

\begin{proposition}\label{prop:mass-current}
Under \eqref{eq:J-semistability}, there are $\varepsilon_0>0$ and a closed
positive current $S\in\alpha-\varepsilon_0\beta$
such that $P_\chi(S)\le c$ in the local-convolution sense.  Consequently,
$T_c:=S+\varepsilon_0\chi\in\alpha$
is a K\"ahler current and satisfies
\begin{equation}\label{eq:Tmc-strict}
T_c>\varepsilon_0\chi,\qquad P_\chi(T_c)\le 
\frac{c}{1+\varepsilon_0c/n}=c_0<c.
\end{equation}
Moreover, for every $b>0$, the set $E_b(T_c)$ is a proper analytic subset of $X$.
\end{proposition}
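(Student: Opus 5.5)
The plan is to take the existence of $S$ and $\varepsilon_0$ directly from Chen's mass-concentration theorem \cite{Chen21}, and then derive the strict cone bound for $T_c$ and the analyticity of $E_b(T_c)$ from the lemmas of \cref{sec:setup} and Siu's theorem. Under \eqref{eq:J-semistability}, Chen's argument (perturbing the class, solving the perturbed equations of \cref{thm:smooth-approximants}, and concentrating mass along the destabilizing subvarieties) produces, for some $\varepsilon_0>0$, a closed positive current $S\in\alpha-\varepsilon_0\beta$. This current is a weak limit of smooth forms $\omega'_j$ with $P_\chi(\omega'_j)\le c+o(1)$. I would quote this statement and transfer the inequality to the limit in the local-convolution sense. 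On a coordinate ball, convolution commutes with weak limits of potentials, and by \cref{lem:F-properties} the sublevel set $\{P_{\gamma_0}\le d\}$ is closed and convex. It follows that $P_{\gamma_0}(S*\rho_r)\le c$, so $P_\chi(S)\le c$ by \cref{lem:subsolution-equivalence}. Since $\chi\in\beta$, the current $T_c=S+\varepsilon_0\chi$ lies in $\alpha$.

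The strict bound comes from \cref{lem:elementary} applied pointwise to the absolutely continuous parts, using \cref{lem:ac-cone}(i). By (i), $P_\chi(S_{\mathrm{ac}})\le c$ almost everywhere. The lemma with $A=S_{\mathrm{ac}}$, $B=\chi$ and $t=\varepsilon_0$ gives
\[
P_\chi\bigl(S_{\mathrm{ac}}+\varepsilon_0\chi\bigr)\le\frac{c}{1+\varepsilon_0c/n}=c_0
\]
almost everywhere. A technical point is that $A>0$ is required there. I would handle it by applying the lemma to $S_{\mathrm{ac}}+\epsilon\chi$ and letting $\epsilon\to0$; alternatively, the remark after \cref{lem:subsolution-equivalence} already gives $S_{\mathrm{ac}}\ge c^{-1}\chi>0$. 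Since $(T_c)_{\mathrm{ac}}=S_{\mathrm{ac}}+\varepsilon_0\chi$, \cref{lem:ac-cone}(i) yields $P_\chi(T_c)\le c_0$. For the positivity of $T_c$, $S\ge0$ gives $T_c\ge\varepsilon_0\chi$, and in fact $S\ge c^{-1}\chi$ gives $T_c\ge(\varepsilon_0+c^{-1})\chi>\varepsilon_0\chi$. In particular $T_c$ is a Kähler current.

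Lelong numbers are unchanged by adding the smooth form $\varepsilon_0\chi$, so $E_b(T_c)=E_b(S)$. Siu's theorem then says that $E_b(S)$ is an analytic subset of $X$. It is proper because a closed positive current has Lelong number zero at almost every point: its potential is locally integrable, and the set where the Lelong number is positive is pluripolar.

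The main obstacle is the first step, namely certifying that Chen's current satisfies the cone inequality in exactly the local-convolution form of \cref{def:local-convolution}, with the constant $c$ and no loss, and that its class is precisely $\alpha-\varepsilon_0\beta$. If Chen's statement only provides $P_\chi(S)\le c+\kappa$ for some small $\kappa>0$, I would instead run the argument of the second paragraph with a slightly smaller multiple of $\chi$. The resulting constant is still strictly below $c$, which is all that is used later. In that case I would restate $c_0$ accordingly.
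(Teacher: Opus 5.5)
Your skeleton is the paper's, and so are the last two steps. The paper obtains $S$ by citation (case~(ii) of \cref{prop:full-trace-concentration}, quoting \cite[Theorem~1.18]{Chen21}). It then gets $P_\chi(T_c)\le c_0$ from \cref{lem:elementary} applied to absolutely continuous parts followed by \cref{lem:ac-cone}, and handles $E_b(T_c)$ by Siu's theorem. Your details in those last two steps are correct: $S_{\mathrm{ac}}\ge c^{-1}\chi$ a.e.\ supplies the hypothesis $A>0$, $T_c\ge(\varepsilon_0+c^{-1})\chi$ gives the strict inequality, and $E_b$ is proper because $\{\nu>0\}$ lies in the polar set of a potential.

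What is wrong is the mechanism you attribute to Chen's current, and that justification has to be dropped. Whenever $\NullJ(\alpha,\beta)\neq\varnothing$, $S$ cannot be a weak limit of smooth closed forms $\omega'_j$ with $P_\chi(\omega'_j)\le c+o(1)$. If it were, the classes $[\omega'_j]$ would tend to $\alpha-\varepsilon_0\beta$. By your own second paragraph, $\omega_j:=\omega'_j+\varepsilon_0\chi$ would then be K\"ahler forms with $[\omega_j]\to\alpha$ and $P_\chi(\omega_j)\le c-\eta$ for some fixed $\eta>0$. For a null subvariety $V$ of dimension $p$, the computation behind \eqref{eq:Z_JcontainNullJ} would give
\[
0=\mathcal J_p(V)=\lim_{j\to\infty}\int_V\bigl(c\,\omega_j^p-p\,\chi\wedge\omega_j^{p-1}\bigr)\ge\eta\,(c-\eta)^{-p}\int_V\chi^p>0 ,
\]
a contradiction. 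So the cone bound on $S$ is a property of a genuinely singular limit current and is not inherited from closed smooth approximants. In concentration arguments of this type (compare the appendix), it appears only after the diagonal mass in $X\times X$ has been split off, through convolutions of the localized, non-closed forms $T^\circ_{j,t,\delta}$. The concentration is along the diagonal, not along destabilizing subvarieties. So $P_\chi(S)\le c$ must be cited from Chen directly in the local-convolution form, exactly as the paper does, rather than obtained by passing a pointwise bound through a weak limit.

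Your fallback is also misdirected. The coefficient $\varepsilon_0$ is forced by the class, and a bound $P_\chi(S)\le c+\kappa$ gives $P_\chi(T_c)<c$ only when $\kappa<\varepsilon_0c(c+\kappa)/n$. Using a smaller multiple of $\chi$ only shrinks the gain.
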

The proof is essentially the same as in \cite[Theorem~1.18]{Chen21}. We include a proof for mass concentration argument for both $P,Q$ operator in the appendix \cref{prop:full-trace-concentration}.

The analyticity of $E_b(T_c)$ is Siu's theorem
\cite[Main Theorem, p.~53]{Siu}.

\begin{remark}\label{cor:strict-current}
A rescaled version may often used.  Choose
$0<\tau<1-\frac{c_{0}}{c}$
and set
$T^-:=(1-\tau)T_c\in(1-\tau)\alpha$.
Then
\begin{equation}\label{eq:Tminus-margins}
T^-\ge a_0\chi,
\qquad
P_\chi(T^-)\le c-\eta_0,
\end{equation}
where
$a_0:=(1-\tau)\varepsilon_0$, $\eta_0:=c-\frac{c_{0}}{1-\tau}>0$. 
\end{remark}




\subsection{Global regularization}
\label{sec:global-regularization}
We will use Demailly's global regularization and attenuation construction
\cite[Proposition~3.8,  Remark 4.7 and Theorem 6.1]{DemaillyFlow94} to regularize the a qpsh function.

We work on a compact K\"ahler manifold $(M^m,\gamma)$ where $\gamma$ is a  reference K\"ahler metric. We use $\gamma$ to define Demailly's modified exponential map
$$\operatorname{exph}:TM\to M,\quad (x,\zeta)\mapsto \operatorname{exph}_x(\zeta).$$
The exponential map is smooth. 
Choose the radial kernel
$$\varrho(t)=\begin{cases}
    C(1-t)^{-2}e^{-1/(1-t)}, &0\le t<1,\\
    0, &t\ge1,
\end{cases}$$
and normalized so that $\int_{\C^m}\varrho(|\zeta|^2)\,dV(\zeta)=1$, where $dV$ denotes the Lebesgue measure on $\C^m$. Notice that $\varrho$ has antiderivative $\varrho_1(t)=-Ce^{-1/(1-t)}$ for $t<1$ 

We identify $(T_xM,\gamma(x))\simeq (\C^m,|dz|^2)$ with a unitary $\gamma(x)$-frame. For a quasi-psh function $\phi$ and $r=|w|>0$, set
\begin{equation}\label{eq:global-exponential}
 \Psi_\phi(x,w):=
 \int_{T_xM}\phi\bigl(\operatorname{exph}_x(w\zeta)\bigr)
              \varrho(|\zeta|^2)\,dV(\zeta),\quad \widetilde \Psi_\phi(x,w):=\Psi_\phi(x,w)+|w|.
\end{equation}
Notice that $\widetilde\Psi_\phi$ is $S^1$-invariant in $w$. Put $\Lambda_\phi(x,r)=\partial_{\log r}\widetilde\Psi_\phi(x,r).$ By \cite[Remark~4.7]{DemaillyFlow94},
\begin{equation}\label{eq:global-radial-slope}
 \lim_{r\to 0^+}\Lambda_\phi(x,r)=\nu(\theta+\ddc\phi,x).
\end{equation} Moreover $\Lambda_\phi(x,r)$ is continuous and uniformly bounded when $r<r_0$.

Define the Demailly-Kiselman-Legendre transform 
\begin{equation}\label{eq:global-attenuation}
 \phi_{b,\varepsilon}(x)=
 \inf_{0<r<\varepsilon}
 \left\{\widetilde\Psi_\phi(x,r)
       +\frac{\varepsilon}{1-r^2/\varepsilon^2}
       -b\log(r/\varepsilon)\right\}.
\end{equation}
It is proved in \cite{DemaillyFlow94} that $\phi_{b,\varepsilon}$ is a qpsh function which is smooth on $M\setminus E_b(\phi)$, and approximates $\phi$ with a controlled Hessian lower bound. The main theorem of this section further show that the regularization can  preserve the subsolution condition. 
 
\begin{theorem}\label{thm:global-cone-regularization}
Fix $F\in\{P,Q\}$. Suppose $\sup_M\phi=0$ and, for fixed $a,d>0$,
$T=\theta+\ddc\phi\ge a\gamma$, $F_\gamma(T)\le d$. There exist constants 
$b_0,\varepsilon_0,C>0$ such that for $0<b<b_0$ and
$0<\varepsilon<\varepsilon_0$, 
$$
 T_{b,\varepsilon}:=\theta+\ddc\phi_{b,\varepsilon}\ge\frac a2\gamma,
 \qquad
 F_\gamma(T_{b,\varepsilon})\le d+C(b+\varepsilon).
$$
Moreover, $\phi_{b,\varepsilon}$ increases in $\varepsilon$ and converges to $\phi$ as $\varepsilon\to0^+$, and $T_{b,\varepsilon}$ is smooth in 
$M\setminus E_b(T)$.
\end{theorem}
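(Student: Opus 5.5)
The plan is to follow Demailly's proof in \cite{DemaillyFlow94} that $\phi_{b,\varepsilon}$ is quasi-psh, but to track the full complex Hessian of the auxiliary function on the product $M\times\{w\in\C:0<|w|<\varepsilon\}$ rather than only a lower bound. Set
\[
G(x,w):=\widetilde\Psi_\phi(x,w)+\frac{\varepsilon}{1-|w|^2/\varepsilon^2}-b\log(|w|/\varepsilon),
\]
so that $\phi_{b,\varepsilon}(x)=\inf_{0<|w|<\varepsilon}G(x,w)$. Since $G$ is $S^1$-invariant in $w$, this is a Kiselman-type partial infimum.

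\textbf{Step 1 (cone condition is inherited by the infimum).} Let $h$ be a local potential of $\theta$. Suppose the matrix $A=\ddc_{x,w}(h+G)$ is positive on the product. At a point where the infimum is attained at an interior $w_x$ and is nondegenerate, the Hessian of the infimum is the Schur complement
\[
A_{xx}-A_{xw}A_{ww}^{-1}A_{wx}.
\]
Its inverse is exactly the $x$-block of $A^{-1}$. Hence
\[
Q_\gamma(\text{Schur complement})=Q_{\gamma\oplus0}(A),
\]
and taking hyperplanes $H\oplus\C_w$ gives the analogous statement $P_\gamma\le P_{\gamma\oplus0}$. Likewise, a lower bound $A\ge a'(\gamma\oplus0)$ passes to the Schur complement. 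It therefore suffices to prove
\[
A\ge \tfrac a2\gamma\oplus0,\qquad F_{\gamma\oplus 0}(A)\le d+C(b+\varepsilon)
\]
near the minimizing set.

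For the general, non-smooth case I would pass through the viscosity formulation of \cref{lem:subsolution-equivalence}. Alternatively, I would use Kiselman's minimum principle applied to $h+G-g$ for suitable quadratic test functions $g$, and then \cref{lem:ac-cone}(iii) to extend across $E_b(T)$.

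\textbf{Step 2 (Hessian of $\Psi_\phi$).} Here I would use Demailly's computation \cite[Proposition~3.8]{DemaillyFlow94} in a refined form. Up to errors $O(|w|)$, the $x$-$x$ block of $\ddc_{x,w}(h+\Psi_\phi)$ equals the $\varrho$-average of $(\operatorname{exph}_x(w\zeta))^*T$. The remaining error terms are bounded by $C(\Lambda_\phi(x,|w|)+|w|)\gamma$ together with mixed $x$-$w$ terms of the same size.

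By convexity of $F_\gamma$ (\cref{lem:F-properties}) and continuity of $\gamma$, the average satisfies $\ge (a-C\varepsilon)\gamma$ and $F_\gamma\le d+C\varepsilon$. This can be justified through the convolution formulation and \cref{lem:ac-cone}(ii). The perturbation inequality \eqref{eq:F-perturbation} then absorbs any error bounded by $t\cdot a\gamma$ with $t$ small. The term $\varepsilon/(1-|w|^2/\varepsilon^2)$ adds positivity in the $w$ direction, which helps dominate $A_{ww}$ and the mixed terms. The term $-b\log|w|$ is pluriharmonic for $w\neq0$.

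\textbf{Step 3 (the main obstacle).} The hard step is controlling the error $C\Lambda_\phi\gamma$, which must be made $O(b+\varepsilon)$ at the minimizing radius. At an interior minimum in $\log r$, the first-order condition gives
\[
\Lambda_\phi(x,r)=b-\partial_{\log r}\Bigl(\frac{\varepsilon}{1-r^2/\varepsilon^2}\Bigr)\le b.
\]
So Demailly's error term becomes $\le C(b+\varepsilon)$ precisely there. The lower bound $\Lambda\ge0$ from convexity in $\log r$ keeps the sign correct. Because $\Lambda$ is continuous and uniformly bounded, this also holds near the minimizer, which is what the Schur-complement argument needs. I expect the delicate point to be that the mixed $x$-$w$ terms must be quadratically absorbed, i.e.\ bounded by $\sqrt{\text{error}}$ times the $x$- and $w$-blocks. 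I would handle this by Cauchy--Schwarz, using the strong $w$-convexity of the $\varepsilon$ term. Choosing $b_0,\varepsilon_0$ small then gives the bounds with $a/2$.

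\textbf{Step 4 (remaining assertions).} Monotonicity in $\varepsilon$ and convergence $\phi_{b,\varepsilon}\to\phi$ are taken from \cite[Theorem~6.1]{DemaillyFlow94}, using that $\widetilde\Psi_\phi(x,r)$ is increasing in $r$. For smoothness off $E_b(T)$: if $\nu(T,x)<b$, then \eqref{eq:global-radial-slope} forces the minimizer to stay in a compact range $r\ge r_1>0$ locally. Strict convexity in $\log r$ from the $\varepsilon$ term and the implicit function theorem then give smoothness of $\phi_{b,\varepsilon}$ there, and this also legitimizes the smooth Schur-complement computation of Step~1 on $M\setminus E_b(T)$.
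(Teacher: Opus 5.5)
Your outer framework matches the paper. The Hessian of $\phi_{b,\varepsilon}$ at the minimizing radius is a Schur complement, the first-order condition gives $\Lambda_\phi(x,r_{b,\varepsilon})\le b$, smoothness off $E_b(T)$ comes from strict convexity in $\log r$, and \cref{lem:ac-cone} extends the bounds across $E_b(T)$.

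\textbf{Main gap (Step~2).} You treat the mixed $x$--$w$ block of $\ddc_{x,w}(h+\widetilde\Psi_\phi)$ as an error of size $O(\Lambda_\phi+|w|)$, and it is not. In Demailly's formula the principal term is $\int T_y([\tau_y])\,d\mu(\zeta)$ with $\tau_y=L_\zeta\xi+s\tilde v_\zeta$. Since $\partial_w\operatorname{exph}_x(w\zeta)\approx\zeta$, the vector $\tilde v_\zeta$ has size one. So the cross block $\int T_y(L_\zeta\xi,s\tilde v_\zeta)\,d\mu$ is as large as the average of $T$ over $B(x,r)$. That average is only controlled by $r^2\int\Delta u\,d\mu\le C$, so it can be of order $r^{-2}$; this happens near $E_b(T)$, where $r_{b,\varepsilon}\to0$. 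The $w$-convexity available for absorption is only $\frac1{4r}$ from $|w|$, and the $\varepsilon$-term contributes just $\sim\varepsilon^{-1}$.

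Hence your Cauchy--Schwarz absorption fails once $r_{b,\varepsilon}\ll b$. In that regime the Schur complement is not a perturbation of the arithmetic mean $\int A_\zeta\,d\mu$: minimizing in $s$ can cancel the large part of $T$. The convexity bound for the arithmetic mean therefore says nothing about $F_\gamma(T_{b,\varepsilon})$.

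The missing idea is \cref{lem:matrix-minimization}:
\begin{enumerate}
\item Keep the Hessian in the form $\int(\xi+sv_\zeta)^*A_\zeta(\xi+sv_\zeta)\,d\mu+\frac{|s|^2}{8r}-C(\Lambda_\phi+r)|\xi|^2$. Only the last term uses $\Lambda_\phi\le b$.
\item Minimize over $s$ with the matrix Cauchy--Schwarz inequality. This gives the harmonic-mean bound $\xi^*B\xi$ with $B=(\int A_\zeta^{-1}d\mu+8r\,\bar v\bar v^*)^{-1}$.
\item Use that $F_{\id}(A)$ is linear ($Q$) or sublinear ($P=\max_K\tr(KA^{-1})$) in $A^{-1}$. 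This yields $F_{\id}(B)\le d(1+Cr)+8r|\bar v|^2$.
\end{enumerate}
In your product language, the equivalent repair is to apply convexity to the $\zeta$-pieces $\bigl(\begin{smallmatrix}I\\ v_\zeta^*\end{smallmatrix}\bigr)A_\zeta\bigl(\begin{smallmatrix}I& v_\zeta\end{smallmatrix}\bigr)+p\,e_we_w^*$, not to the $x$-block alone. Each piece has Schur complement with inverse $A_\zeta^{-1}+p^{-1}v_\zeta v_\zeta^*$.

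\textbf{Second gap (Step~1, $F=P$).} Your reduction is wrong for $F=P$. Because $\gamma\oplus0$ has rank $m$ on $\C^m\oplus\C_w$, the sum of its $m$ largest eigenvalues relative to $A$ is its whole trace. Thus $P_{\gamma\oplus0}(A)=Q_{\gamma\oplus0}(A)=Q_\gamma(\mathcal S(A))$, so your sufficient condition is really a $Q$-bound. That does not follow from $P_\gamma(T)\le d$. Take $\phi=0$ and $\theta=\frac{m-1}{d}\gamma$ on a flat torus with $m\ge2$. Then $P_\gamma(T)=d$ and $\phi_{b,\varepsilon}$ is constant, yet $P_{\gamma\oplus0}(A)=Q_\gamma(\theta)=\frac{m}{m-1}d$. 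You must restrict the maximum to hyperplanes $H\oplus\C_w$, i.e.\ estimate $P_\gamma$ of the Schur complement directly, as the paper does.
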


We will need the following later.

\begin{lemma}\label{lem:retained-hessian}
Under the hypotheses of \cref{thm:global-cone-regularization},  
there are measurable positive matrices $A_\zeta$, smooth vector field  $v_\zeta$, $r_0>0$,
and a fixed constant $C>0$ such that, for small $|w|=r<r_0$ and $\tilde\xi:=(\xi+s\partial_w)\in T_xM\oplus T\C$, we have 
\begin{equation}      
 \label{eq:retained-lifted-hessian}
 A_\zeta \ge a(1-Cr)\id,\qquad
 F_\id(A_\zeta)\le d(1+Cr),
\end{equation} and 
\begin{align}\label{eq:lowerbound_of_hessian}
 \theta([\xi])+(\ddc\widetilde\Psi_\phi)|_{(x,w)}([\tilde\xi])
 &\ge \int_{\zeta\in T_xM}(\xi+v_\zeta s)^*A_\zeta(\xi+v_\zeta s)d\mu(\zeta)
       \\ &+\frac{|s|^2}{8 r}-C(\Lambda_\phi(x,r)+r)|\xi|^2\notag.\end{align}
Here for a vector $\xi$, we denote $[\xi]:=\sqrt{-1}\xi\wedge\bar\xi$.
\end{lemma}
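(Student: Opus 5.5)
We differentiate under the integral in \eqref{eq:global-exponential} and follow Demailly's computation of the complex Hessian of $\Psi_\phi$ on $M\times\C$, as in \cite[Proposition~3.8]{DemaillyFlow94}, while keeping track of the absolutely continuous part of $T$.

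\textbf{Step 1: localize and split $\phi$.} On a coordinate ball around $x$ we write $\theta=\ddc h$ and $u=h+\phi$. Then
\[
\Psi_\phi(x,w)=\int u(\operatorname{exph}_x(w\zeta))\varrho(|\zeta|^2)\,dV(\zeta)-\Psi_h(x,w).
\]
$\Psi_h$ is smooth, and its Hessian equals $\theta$ in the $\xi$-directions up to $O(r)$. The leading part of $\ddc_{x,w}$ of the first integral is the pull-back of $\ddc u=T$ under the map $(x,w)\mapsto\operatorname{exph}_x(w\zeta)$. Its differential sends $\tilde\xi=\xi+s\partial_w$ to
\[
\xi+O(r|\xi|)+s\,\zeta+O(r|s|).
\]
The $O(r)$ terms come from the deviation of $\operatorname{exph}$ from a holomorphic affine map. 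We therefore set $v_\zeta:=\zeta+(\text{smooth correction})$. We take $A_\zeta$ to be the pull-back of $T_{\mathrm{ac}}$ at $\operatorname{exph}_x(w\zeta)$, expressed in the unitary $\gamma(x)$-frame and parallel-transported. The measure is $d\mu=\varrho\,dV$. Any singular part $T_{\mathrm s}$ contributes a positive term, which we simply drop.

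\textbf{Step 2: bound the matrices.} By \cref{lem:ac-cone}(i), $T_{\mathrm{ac}}\ge a\gamma$ and $F_\gamma(T_{\mathrm{ac}})\le d$ hold almost everywhere. Transporting from $\gamma$ at $\operatorname{exph}_x(w\zeta)$ to $\gamma(x)$ costs a factor $1+O(r)$. This uses continuity of $\gamma$ and the scaling rules for $F$, namely $\gamma_0\le\gamma\le(1+\delta)\gamma_0$ and \eqref{eq:F-perturbation}. The result is \eqref{eq:retained-lifted-hessian}. Measurability of $A_\zeta$ follows because $T_{\mathrm{ac}}$ is a measurable form.

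\textbf{Step 3: the error terms and $|s|^2/8r$.} This is the main obstacle. The non-holomorphic terms of $\operatorname{exph}$ produce second-derivative contributions of the form $\int\partial u\cdot\partial^2(\operatorname{exph})\,\varrho$. Following Demailly, these are controlled after integrating by parts in $\zeta$ against $\varrho$, which is why $\varrho$ has the explicit antiderivative $\varrho_1$. One obtains the error $C(\Lambda_\phi+r)|\xi|^2$, together with cross terms of size $|\xi||s|$ and $|s|^2$ times $\Lambda_\phi$ or $O(1)$.

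The $|w|$ added in $\widetilde\Psi_\phi$ supplies the positive term $\ddc|w|\approx|s|^2/(4r)$. We absorb the $\xi s$ cross terms by Cauchy--Schwarz: half of this term goes into the $C\Lambda_\phi|\xi|^2$ error, and the $|s|^2$ errors are absorbed since they are $O(1)\ll 1/r$ for $r<r_0$. This leaves $|s|^2/(8r)$.

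The step I expect to be most delicate is showing that the cross term between $\xi$ and $s$ is bounded by $\Lambda_\phi|\xi||s|$ rather than by a full Hessian quantity. For this I would use that, by $S^1$-invariance and \eqref{eq:global-radial-slope}, the $w$-derivatives of $\Psi_\phi$ are expressible through $\Lambda_\phi$, which is uniformly bounded for $r<r_0$.
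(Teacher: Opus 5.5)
Your route is the paper's: split $u=h+\phi$, apply Demailly's Hessian formula \cite[Prop.~3.8]{DemaillyFlow94}, and take $A_\zeta$ to be $T_{\mathrm{ac}}$ at $y=\operatorname{exph}_x(w\zeta)$ transported back to $x$. The paper makes this precise by writing $\tau_y=L_\zeta\xi+s\tilde v_\zeta$, $A_\zeta=L_\zeta^*T_{\mathrm{ac}}L_\zeta$ and $v_\zeta=L_\zeta^{-1}\tilde v_\zeta$, with $L_\zeta$ an isometry up to a factor $1\pm Cr$. You then drop $T_{\mathrm s}$, get \eqref{eq:retained-lifted-hessian} from \cref{lem:ac-cone}, and absorb errors into the $\frac{|s|^2}{4r}$ coming from $\ddc|w|$.

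\textbf{The gap.} The one step where you propose your own mechanism, bounding the $\xi$--$s$ cross error via $S^1$-invariance, would not work. $S^1$-invariance gives $\partial_{\bar w}\widetilde\Psi_\phi=\Lambda_\phi(x,r)/(2\bar w)$. Hence the mixed Hessian entry is $\partial_x\Lambda_\phi/(2\bar w)$ and the vertical entry is $\partial_{\log r}\Lambda_\phi/(4r^2)$. These are the \emph{total} entries. They contain the main cross term $\int T_y(L_\zeta\xi,\overline{s\tilde v_\zeta})\,d\mu$, which can be as large as $Cr^{-2}|\xi||s|$ and must remain inside the retained form $\int(\xi+sv_\zeta)^*A_\zeta(\xi+sv_\zeta)\,d\mu$. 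Moreover, nothing bounds $\partial_x\Lambda_\phi$ by $\Lambda_\phi$. So this device can neither isolate nor bound the error part of the cross term.

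\textbf{What closes the step.} You need the explicit structure of Demailly's remainder, which you invoke only in passing:
\[
 \ddc\widetilde\Psi_u([\tilde\xi])=\int T_y\bigl([\tau_y]+r^2V_y\bigr)\,d\mu(\zeta)+\frac{|s|^2}{4r}+O(r)\bigl(|\xi|^2+|s|^2\bigr).
\]
Here every term of $V_y$ carries the factor $-\varrho_1/\varrho=(1-|\zeta|^2)^2$ up to $O(r^2)$, and this includes its mixed part:
\[
 |V_y|\le C\bigl(|\varrho_1/\varrho|+r^2\bigr)\bigl(|\xi|^2+|\xi||s|\bigr).
\]
Positivity of $T$ gives $|T_y(V_y)|\le C|V_y|\,\Delta u(y)$. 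Hence the error is at most
\[
 C\bigl(|\xi|^2+|\xi||s|\bigr)\bigl(r^2\int(-\varrho_1)\Delta u\,dV+r^4\int\Delta u\,d\mu\bigr).
\]
Demailly's identity $\Lambda_u=r+2r^2\int(-\varrho_1)\Delta u\,dV+O(r^2)$ from \cite[(4.5)]{DemaillyFlow94}, together with the mass bound $r^2\int\Delta u\,d\mu\le C$ and $\Lambda_u=\Lambda_\phi+O(r)$, converts this into $C(\Lambda_\phi+r)(|\xi|^2+|\xi||s|)$.

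You also do not need the cross error to be $\lesssim\Lambda_\phi|\xi||s|$. Since $\Lambda_\phi$ is bounded for $r<r_0$, any $O(1)|\xi||s|$ term satisfies $C|\xi||s|\le\frac{|s|^2}{16r}+C'r|\xi|^2$. What must be ruled out is a cross error of size $r^{-1}|\xi||s|$ or worse, and that is exactly what the $\varrho_1$-weighting in $V_y$ provides. With this replacement your outline coincides with the paper's proof.
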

\begin{proof} 
Denote $y=\operatorname{exph}_x(w\zeta)$ and write $d\mu(\zeta)=\varrho(|\zeta|^2)dV(\zeta)$. Let $h$ be the local potential of $\theta$ and let $u=h+\phi$.  Apply Demailly's  Hessian formula (\cite[Prop 3.8]{DemaillyFlow94}) (with $N=2$) to $u$ to have 
\begin{align}\label{eq:ddcPsi_phi1}
 (\ddc\widetilde\Psi_{u})|_{(x,w)}([\tilde \xi])
 &= \int_{\zeta\in T_xM} T_y ([\tau_y]+|w|^2 V_y)d\mu(\zeta)\\
 &+\frac{|s|^2}{4|w|}+ O(r) (|\xi|^2+|s|^2),\notag
\end{align}
where
\begin{align}
 \tau_y:&=\partial \operatorname{exph}_{(x,w\zeta)}(\xi^h+s\zeta^v+|w|^2\Xi^v_y),\label{eq:tau_y}
\end{align}
$\Xi$ is a smooth vector field and $V$ is a smooth $(1,1)$-vector field. The formulas in \cite[Proposition~3.8 and (3.10)]{DemaillyFlow94}, with $N=2$, give
\begin{equation}\label{eq:Xi_yandV_y}
 |\Xi|\le C|\xi|,\qquad
 |V|\le C\left(\left|\frac{\varrho_1(|\zeta|^2)}{\varrho(|\zeta|^2)}\right|+r^2\right)
 (|\xi|^2+|\xi||s|).
\end{equation}
Indeed, all terms in braces in Demailly's formula for $U$ carry the
factor $-\varrho_1/\varrho=(1-|\zeta|^2)^2$; the remaining quadratic
term and the correction $r^2\Xi\wedge\bar\Xi$ are $O(r^2|\xi|^2)$.
The constant $C$ is uniform for $0<r<r_0$.

We first identify $A_\zeta$ and $v_\zeta$. Write $\tau_y=L_\zeta\xi +s\tilde{v}_\zeta$ where $L_\zeta$ is a linear transform and $\tilde v_\zeta$ is a vector field. Then \eqref{eq:tau_y} and \cite[Prop 2.9]{DemaillyFlow94} implies that for some uniform $C>0$, $L_\zeta$ is invertible and \begin{align}(1-Cr)\mathrm{Id}\leq (L_\zeta^{-1})^*L_\zeta^{-1}\leq (1+Cr)\mathrm{Id}.\label{eq:L_zeta}\end{align}
Pick $v_\zeta:=L_\zeta^{-1}\tilde{v}_\zeta$. Then $|v_\zeta|$ is uniformly bounded. 
 By Lebesgue decomposition, $$T=T_{\mathrm{ac}}+T_{\mathrm s}.$$ View $T_{\mathrm{ac}}$ as a measurable function of positive matrix. Then if we take $A_\zeta:=L_\zeta^*T_{\mathrm{ac}}L_\zeta$,\begin{align}\label{eq:Psi_phifirstpart}
    \int T_y ([\tau_y])d\mu(\zeta)\geq \int (\xi+sv_\zeta)^*A_\zeta(\xi+sv_\zeta)d\mu(\zeta)
\end{align}
For sufficiently small $r$, $(1-Cr)\gamma\leq \mathrm{Id} \leq (1+Cr)\gamma$. By \cref{lem:ac-cone},  \begin{align}
    T_{\mathrm{ac}}\geq (1-Cr)a\mathrm{Id}, \quad F_{\mathrm{Id}}(T_{\mathrm{ac}})\leq \frac{d}{1-Cr}.\label{eq:T_acineq}
\end{align} Notice that for a positive Hermitian form $A$,  $F_{\mathrm{Id}}(L^*_\zeta AL_\zeta)=F_{(L_\zeta^{-1})^*L_\zeta^{-1}}(A).$ Therefore, by \eqref{eq:L_zeta} and \eqref{eq:T_acineq}, we have $$A_\zeta\geq (1-C'r)a\mathrm{Id},\quad F_{\mathrm{Id}}(A_\zeta)\leq d(1+C'r).$$

Now we estimate the second term in \eqref{eq:ddcPsi_phi1}. Write
$\Delta u=\sum_j u_{j\bar j}$ in normal coordinates. By \eqref{eq:Xi_yandV_y},
\begin{align*}
 r^2\int T_y(V_y)d\mu(\zeta)
 &\ge -Cr^2(|\xi|^2+|\xi||s|)
       \int(-\varrho_1(|\zeta|^2))\Delta u(y)\,dV(\zeta)\\
 &\quad-Cr^4(|\xi|^2+|\xi||s|)\int\Delta u(y)\,d\mu(\zeta).
\end{align*}
The local mass estimate \cite[(3.11)]{DemaillyFlow94} gives
$r^2\int\Delta u(y)\,d\mu(\zeta)\le C$. Integration by parts in
$\partial_{\log r}\widetilde\Psi_u$, as in
\cite[(4.5)]{DemaillyFlow94}, gives
\begin{equation}\label{eq:Lambda}
 \Lambda_u(x,r)=r+2r^2\int(-\varrho_1(|\zeta|^2))
 \Delta u(y)\,dV(\zeta)+O(r^2).
\end{equation}
The change from normal coordinates to $\operatorname{exph}_x(r\zeta)$
contributes $O(r^2)$ by the same mass estimate. Hence
\begin{equation}\label{eq:tidlePhi2}
 r^2\int T_y(V_y)d\mu(\zeta)
 \ge -C(\Lambda_u(x,r)+r)(|\xi|^2+|\xi||s|).
\end{equation}

The term contributed by the smooth form $\theta=\ddc h$ can be estimated as follows. Since $h$ is smooth,
\begin{align}
    \label{eq:theta_plus}
|\theta_x([\xi])-\ddc \Psi_h|_{(x,w)}([\tilde\xi])|\leq C(r^2|\xi|^2+r|\xi||s|+|s|^2).\end{align} Also $\Lambda_{h}(x,w)=O(r).$

By \eqref{eq:ddcPsi_phi1},  \eqref{eq:Psi_phifirstpart}, \eqref{eq:tidlePhi2}, and \eqref{eq:theta_plus},  
\begin{align}
 \theta([\xi])+&(\ddc\widetilde\Psi_{\phi})|_{(x,w)}([\tilde\xi])
 \ge \int (\xi+sv_\zeta)^*A_\zeta(\xi+sv_\zeta)d\mu(\zeta)
       \\ &+\frac{|s|^2}{4 r}-C(\Lambda_\phi(x,r)+r)(|\xi|^2+|\xi||s|)-C|s|^2\notag.\end{align}
Since $\Lambda_\phi$ is uniformly bounded, using the AM-GM inequality and choosing a small $r$, we may assume $$C(\Lambda_\phi+r)|\xi||s|+C|s|^2\leq C' r|\xi|^2+\frac{1}{8r}|s|^2.$$ We can then absorb the terms involving $|\xi||s|$ and $|s|^2$ to obtain \eqref{eq:lowerbound_of_hessian}.

\end{proof}

For an $(m+1)\times (m+1)$ Hermitian matrix $$H=\begin{pmatrix}
    B & v\\
    v^* & p
\end{pmatrix}$$ where $p>0$ is a real number, we denote the Schur complement of $H$ by $$\mathcal S(H):=B-\frac1{p}vv^*.$$ The following Lemma characterizes $\mathcal S(H)$ via minimality. See for instance, \cite[Cor 1.5.5]{Bhatia}. 

\begin{lemma} Given $H$ as above with $p>0$. For any $\xi\in\C^m$, 
    \begin{align}
 \inf_{s\in\C}
 \left\{ \begin{pmatrix}
     \xi^* & \bar s
 \end{pmatrix} H \begin{pmatrix}
     \xi \\ s
 \end{pmatrix}\right\}= \xi^* \mathcal{S}(H)\xi.
    \end{align}
\end{lemma}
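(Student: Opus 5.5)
The plan is to minimize the quadratic form in $s$ directly by completing the square, using $p>0$. Expanding gives
\[
\begin{pmatrix}\xi^* & \bar s\end{pmatrix} H \begin{pmatrix}\xi\\ s\end{pmatrix}
=\xi^*B\xi+\bar s\,v^*\xi+s\,\xi^*v+p|s|^2 .
\]
Since $p>0$, we can rewrite $p|s|^2+2\operatorname{Re}(s\,\xi^*v)$ as
\[
p\,\Bigl|s+\tfrac1p v^*\xi\Bigr|^2-\tfrac1p|v^*\xi|^2 .
\]
To check this, expand the square. This gives $p|s|^2+s\,\overline{v^*\xi}+\bar s\,v^*\xi+\frac1p|v^*\xi|^2$, and $\overline{v^*\xi}=\xi^*v$, so the cross terms match.

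This yields the identity
\[
\begin{pmatrix}\xi^* & \bar s\end{pmatrix} H \begin{pmatrix}\xi\\ s\end{pmatrix}
=\xi^*\Bigl(B-\tfrac1p vv^*\Bigr)\xi+p\Bigl|s+\tfrac1p v^*\xi\Bigr|^2 ,
\]
where we use $|v^*\xi|^2=\xi^*vv^*\xi$. The last term is nonnegative, so the infimum over $s\in\C$ is at least $\xi^*\mathcal S(H)\xi$. The infimum is attained, with equality, at $s=-\frac1p v^*\xi$, so it equals $\xi^*\mathcal S(H)\xi$.

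There is no real obstacle here. The only care needed is to keep the complex conjugations straight in the cross term.
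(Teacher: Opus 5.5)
Your proposal is correct and complete. The paper gives no argument for this lemma; it only cites \cite[Cor.~1.5.5]{Bhatia}. Your completing-the-square computation is the standard proof behind that reference: it is the scalar form of the block factorization of $H$ through $\mathcal S(H)\oplus p$. It yields
\[
\begin{pmatrix}\xi^* & \bar s\end{pmatrix} H \begin{pmatrix}\xi\\ s\end{pmatrix}
=\xi^*\mathcal S(H)\xi+p\,\bigl|s+p^{-1}v^*\xi\bigr|^2
\]
for every $s\in\C$, and the infimum is attained at $s=-p^{-1}v^*\xi$. This uses only that $H$ is Hermitian and $p>0$, with no positivity of $B$. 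That covers both ways the paper uses the lemma: the lower bound in \cref{lem:matrix-minimization} and the equality in the Schur-complement formula \eqref{eq:global-schur}.
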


Next, we prove an inequality which allows us to apply a generalized Kiselman's minimum principle following  Ross-Witt Nystr\"om  \cite{RWN} conceptually.
\begin{lemma}\label{lem:matrix-minimization}
Let $\mu$ be a probability measure, $a,d>0$, and $F\in\{P,Q\}$. Let $A_\zeta$ be a measurable
positive-definite matrix family such that $$A_\zeta\ge a\id, \quad F_\id(A_\zeta)\le d, \quad \mu\text{-a.e.}$$ Let $v_\zeta$ be a continuous vector field such that $|v_\zeta|\le L$. For $p>0$ define
$G=\int A_\zeta^{-1}\,d\mu$, $\bar v=\int v_\zeta\,d\mu$, $ B=(G+p^{-1}\bar v\bar v^*)^{-1}$.
Then
\begin{align}
 \label{eq:matrix-minimum}
 \inf_{s\in\C}
 \left\{\int(\xi+v_\zeta s)^*A_\zeta(\xi+v_\zeta s)\,d\mu
                 +p|s|^2\right\}
 \ge \xi^* B\xi,\end{align}
 Moreover, 
 \begin{align}
B\ge \frac a{(1+aL^2/ p)} \id,\quad
 F_\id( B)\le d+|\bar v|^2/p.
 \label{eq:matrix-minimum-cone}
\end{align}
\end{lemma}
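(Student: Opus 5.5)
The plan is to prove the minimization bound \eqref{eq:matrix-minimum} by a pointwise Cauchy--Schwarz argument, and then to obtain the cone bounds \eqref{eq:matrix-minimum-cone} from the harmonic-mean structure of $B$.

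\textbf{Step 1: the infimum over $s$.} Fix $\xi$ and $s$, and put $w_\zeta:=\xi+v_\zeta s$. For any vector $\eta$, Cauchy--Schwarz in the $A_\zeta$-inner product gives pointwise
\[
 w_\zeta^*A_\zeta w_\zeta\ge 2\,\mathrm{Re}(\eta^*w_\zeta)-\eta^*A_\zeta^{-1}\eta .
\]
Integrating against $\mu$ then yields
\[
 \int w_\zeta^*A_\zeta w_\zeta\,d\mu+p|s|^2
 \ge 2\,\mathrm{Re}\bigl(\eta^*\xi+(\eta^*\bar v)s\bigr)-\eta^*G\eta+p|s|^2 .
\]
We minimize the right side in $s$, which gives $-|\eta^*\bar v|^2/p$. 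Hence the left side is bounded below by $2\,\mathrm{Re}(\eta^*\xi)-\eta^*(G+p^{-1}\bar v\bar v^*)\eta$, uniformly in $s$. Now take the supremum over $\eta$; the optimum is $\eta=(G+p^{-1}\bar v\bar v^*)^{-1}\xi$, and it gives $\xi^*B\xi$. Measurability and integrability are harmless, since $A_\zeta\ge a\id$ makes $A_\zeta^{-1}\le a^{-1}\id$ bounded, so $G$ is a well-defined positive matrix.

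\textbf{Step 2: the lower bound on $B$.} From $A_\zeta^{-1}\le a^{-1}\id$ we get $G\le a^{-1}\id$. Also $\bar v\bar v^*\le |\bar v|^2\id\le L^2\id$. Therefore $B^{-1}\le (a^{-1}+L^2/p)\id$, and inverting gives $B\ge \frac{a}{1+aL^2/p}\id$.

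\textbf{Step 3: the $F$-bound.} This is the main step. For $F=Q$ we have $Q_\id(B)=\tr B^{-1}=\tr G+|\bar v|^2/p$. Moreover $\tr G=\int \tr A_\zeta^{-1}\,d\mu=\int Q_\id(A_\zeta)\,d\mu\le d$, which gives the claim immediately.

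For $F=P$, I would use that for a positive matrix $M$,
\[
 P_\id(M)=\max_H \tr\bigl((M|_H)^{-1}\bigr),
\]
and that $(M|_H)^{-1}$ is dominated by the compression of $M^{-1}$ to $H$. Equivalently, $P_\id(M)=\tr M^{-1}-\lambda_{\min}$ relative to the $M^{-1}$-structure, which can be read off from \cref{lem:F-properties}: $P_\id(M)=\lim_{t\to\infty}\max_{\ell}\tr\bigl((M+t\ell\ell^*)^{-1}\bigr)$.

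The cleanest route is the dual description. The function $M\mapsto \tr_{M|_H}(\id|_H)$ is convex, and we must bound it at $B$, which is a harmonic-type mean of the $A_\zeta$ together with a rank-one correction. I would first treat $B_0:=G^{-1}$. By operator convexity of inversion and the Schur-complement lemma, $(B_0|_H)^{-1}$ equals the Schur complement of $G$ relative to $H$. This is a concave function of $G$, and it is dominated by $\int (A_\zeta|_H)^{-1}\,d\mu$, because the Schur complement of a sum dominates the sum of Schur complements. Taking traces gives
\[
 \tr_{B_0|_H}\id\le \int P_\id(A_\zeta)\,d\mu\le d .
\]
Adding the rank-one term $p^{-1}\bar v\bar v^*$ to $G$ raises the Schur complement by at most $p^{-1}\bar v\bar v^*$ compressed, whose trace is at most $|\bar v|^2/p$. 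Maximizing over $H$ gives $P_\id(B)\le d+|\bar v|^2/p$.

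The obstacle I expect is this $P$ case, specifically verifying the superadditivity of Schur complements under integration. I would prove it via the variational characterization in the Schur lemma above: an infimum of a sum is at least the sum of the infima.
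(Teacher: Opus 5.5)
Your Steps~1 and~2 and the $Q$ case are correct. Step~1 is a dual form of the paper's argument: the paper applies matrix Cauchy--Schwarz and then Schur-complement minimization, and your test vector $\eta$ does both at once. The gap is in the $P$ case, where both of your hyperplane-by-hyperplane comparisons are false.

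Write $\mathcal S_H(M):=\bigl((M^{-1})|_H\bigr)^{-1}$. Then $\xi^*\mathcal S_H(M)\xi=\inf_{\eta\perp H}(\xi+\eta)^*M(\xi+\eta)$ for $\xi\in H$. You also have $(B_0|_H)^{-1}=\mathcal S_H(G)$ and $(A_\zeta|_H)^{-1}=\mathcal S_H(A_\zeta^{-1})$. The principle you invoke, that an infimum of a sum is at least the sum of the infima, gives
\[
 (B_0|_H)^{-1}=\mathcal S_H\Bigl(\int A_\zeta^{-1}\,d\mu\Bigr)\ \ge\ \int\mathcal S_H(A_\zeta^{-1})\,d\mu=\int(A_\zeta|_H)^{-1}\,d\mu .
\]
This is a \emph{lower} bound for $\tr_{B_0|_H}\id$, but you need an upper bound. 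For fixed $H$ the upper bound is false. Take $m=2$, $H=\C e_1$, $\mu$ uniform on two points, and
\[
 A_1^{-1}=\begin{pmatrix}1&1\\1&2\end{pmatrix},\qquad
 A_2^{-1}=\begin{pmatrix}1&-1\\-1&2\end{pmatrix}.
\]
Then $G=\mathrm{diag}(1,2)$ and $(B_0|_H)^{-1}=1$. But $(A_1)_{11}=(A_2)_{11}=2$, so $\int(A_\zeta|_H)^{-1}\,d\mu=1/2$.

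The rank-one step fails in the same way. Take $G=\begin{pmatrix}1&1/2\\1/2&1\end{pmatrix}$, $p=1$, $\bar v=(2,-1/4)^T$, $H=\C e_1$. Then $\mathcal S_H(G)=3/4$ and $G+\bar v\bar v^*=\mathrm{diag}(5,17/16)$. So the Schur complement increases by $17/4$, more than the compression $4$ and more than $|\bar v|^2=65/16$.

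The inequality $\tr_{B_0|_H}\id\le\int P_\id(A_\zeta)\,d\mu$ that you display is true. It holds only because the right side maximizes over hyperplanes separately for each $\zeta$, so it cannot come from comparing with the same $H$ for every $\zeta$. Each $\tr\mathcal S_H(\cdot)$ is concave, so Jensen runs the wrong way.

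The missing idea is to maximize over $H$ before comparing, and you already have the tool. Your remark $(M|_H)^{-1}\le(M^{-1})|_H$ becomes an equality when $H$ is spanned by eigenvectors of $M$ for its $m-1$ smallest eigenvalues. This gives
\[
 P_\id(M)=\max_K\tr(KM^{-1}),
\]
the maximum taken over rank-$(m-1)$ orthogonal projections $K$; this is the sum of the $m-1$ largest eigenvalues of $M^{-1}$, i.e.\ your $\tr M^{-1}-\lambda_{\min}(M^{-1})$. This quantity is monotone and subadditive in $M^{-1}$. Since $B^{-1}=\int A_\zeta^{-1}\,d\mu+p^{-1}\bar v\bar v^*$ is affine in the $A_\zeta^{-1}$,
\[
 P_\id(B)\le\max_K\tr(KG)+p^{-1}|\bar v|^2
 \le\int P_\id(A_\zeta)\,d\mu+p^{-1}|\bar v|^2\le d+|\bar v|^2/p .
\]
This is exactly the paper's argument for the $P$ case.
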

\begin{proof}
$G$ is positive definite and $G\le a^{-1}\id$.
The matrix Cauchy--Schwarz gives
$$
 \int y_\zeta^*A_\zeta y_\zeta\,d\mu
 \ge\left(\int y_\zeta\,d\mu\right)^*
     G^{-1}\left(\int y_\zeta\,d\mu\right).
$$
Set $y_\zeta=\xi+v_\zeta s$. Let $$H=\begin{pmatrix}
    G^{-1}& G^{-1}\bar v\\
    \bar v^*G^{-1} & \bar v^*G^{-1}\bar v+p
\end{pmatrix}.$$ Then    $$\begin{pmatrix}
    \xi^* &\bar s
\end{pmatrix} H \begin{pmatrix}
    \xi \\s 
\end{pmatrix}\geq \xi^*\mathcal{S}(H) \xi.$$  Since $$B=(G+p^{-1}\bar v\bar v^*)^{-1}=G^{-1}-\frac{G^{-1}\bar v\bar v^*G^{-1}}{p+\bar v^*G^{-1}\bar v}=\mathcal{S}(H),$$ we have proved \eqref{eq:matrix-minimum}. The lower bound of $B$ follows from
$ B^{-1}\le(a^{-1}+L^2/ p)\id$.

Let  $\Pi$ denotes the set of all rank $m-1$ idempotent Hermitian operator. By the definition of $P$, we have $$P_{\id}(A)=\max_{K\in\Pi}\tr(KA^{-1}).$$ Since $A\mapsto \max_{K\in\Pi} \tr(KA)$ is an increasing and subadditive function,  \begin{align}
    P_{\id}(B)&=\max_{K\in \Pi}\tr(K (G+p^{-1}\bar v\bar v^*))\\
    &\leq \max_{K\in \Pi}\tr(K G)+p^{-1}|\bar v|^2 \notag\\
    &\leq \int \max_{K\in\Pi} \tr(KA_\zeta^{-1}) d\mu+p^{-1}|\bar v|^2 \notag \\
    &\leq d+p^{-1}|\bar v|^2.\notag
\end{align}
For $F=Q$, the same bound follows from the identity
$$
 Q_\id(B)=\tr(B^{-1})
 =\int Q_\id(A_\zeta)\,d\mu+p^{-1}|\bar v|^2.
$$
\end{proof}

\begin{proof}[Proof of \cref{thm:global-cone-regularization}]
Let $x\notin E_b(T)$. In a small coordinate ball of $x$ of radius $\varepsilon$, we may require that $(1-C\varepsilon)\gamma_0\leq \gamma \leq (1+C\epsilon)\gamma_0$ where $\gamma_0$ is a constant positive Hermitian form. We may use $\gamma_0$ to identify real $(1,1)$ forms with Hermitian matrices. Let $|w|=r$ small, let $$\Phi(x,w):=  \widetilde\Psi_\phi(x,r)
       +\frac{\varepsilon}{1-r^2/\varepsilon^2}
       -b\log(r/\varepsilon)$$ is strictly convex in $\log r$,  and
tends to $+\infty$ as $r\to0^+$ or $r\to \varepsilon^-$. Hence there is a unique $r_{b,\varepsilon}\in (0,\varepsilon)$ such that  $\min_{r\in(0,\varepsilon)} \Phi(x,r)$ is attained. Moreover, $r_{b,\varepsilon}(x)$  is smooth in $x$, and 
satisfies
\begin{equation}\label{eq:global-active-radius}
 \Lambda_\phi(x,r_{b,\varepsilon})
 +\frac{2\varepsilon(r_{b,\varepsilon}/\varepsilon)^2}
        {(1-(r_{b,\varepsilon}/\varepsilon)^2)^2}=b.
\end{equation}
In particular, $\Lambda_\phi(x,r_{b,\varepsilon})\le b$.

Using $\Phi_r(x,r_{b,\varepsilon}(x))=0$, we have $$\Phi_{r\bar j}=-\Phi_{rr}\partial_{\bar j}r_{b,\varepsilon}.$$ Since $\Phi(x,w)=\Phi(x,|w|)$,  differentiation of the minimum gives
$$(\phi_{b,\varepsilon})_{i\bar j} =\Phi_{i\bar j}-\frac{\Phi_{i\bar w}\Phi_{w\bar j}}{\Phi_{w\bar w}},$$
which is the Schur complement of the complex Hessian of $\Phi$ at $(x,w)$. Thus the minimum property of the Schur complement gives 
\begin{equation}\label{eq:global-schur}
 T_{b,\varepsilon}([\xi])
 =\inf_{s\in\C}\left((\theta+\ddc \Phi)[\xi+s\partial_w]\right).
\end{equation}
By  \cref{lem:retained-hessian} and \eqref{eq:matrix-minimum} with $p=\frac{1}{8r_{b,\varepsilon}}$, we have $$T_{b,\varepsilon}([\xi])\geq \xi^*(B-C_0(b+\varepsilon)\id)\xi.$$ Here $$B=(G+\bar v \bar v^*/p)^{-1},\quad G=\int A_\zeta^{-1}d\mu(\zeta),\quad \bar v=\int v_\zeta d\mu(\zeta),$$ which are given in  \cref{lem:matrix-minimization}.    

With \eqref{eq:retained-lifted-hessian} and \eqref{eq:matrix-minimum-cone}, we have $$A_\zeta\geq a(1-C\varepsilon)\id,\quad B\geq \frac{a(1-C\varepsilon)}{1+8a(1-C\varepsilon)L^2\varepsilon} \id.$$ Let $a_\varepsilon:=\frac{a(1-C\varepsilon)}{1+8a(1-C\varepsilon)L^2\varepsilon}$. Then $\lim_{\varepsilon\to0^+}a_\varepsilon=a$. For sufficiently small $\varepsilon$, we have $$T_{b,\varepsilon}\geq B-C_0(b+\varepsilon)\id\geq B\left(1-\frac{C_0(b+\varepsilon)}{a_\varepsilon}\right).$$ Then  for small $b$ and $\varepsilon$, we may also require that $C_0(b+\varepsilon)/a_\varepsilon<1/4$. Then by \cref{lem:matrix-minimization}, we have \begin{align}\label{eq:T_bepsiloninMminusE_b}F_\gamma(T_{b,\varepsilon})&\leq \frac{(1+C\varepsilon)F_{\id}(B)}{1-\frac{C_0(b+\varepsilon)}{a_\varepsilon}}\leq \frac{(1+C'\varepsilon)d+8\varepsilon L^2}{1-\frac{C_0(b+\varepsilon)}{a_\varepsilon}}\leq d+C''(b+\varepsilon),\\
T_{b,\varepsilon}&\geq (1-C\varepsilon)a_\varepsilon(1-C_0(b+\varepsilon)/a_\varepsilon)\gamma\geq \frac{a}{2}\gamma.\notag\end{align}
The constants are uniform on all of $M\setminus E_b(T)$.

Since $\phi_{b,\varepsilon}$ is quasi-psh, the singular part of
$T_{b,\varepsilon}$ is positive. As $E_b(T)$ has zero smooth
volume, the preceding lower bound yields
\[
(T_{b,\varepsilon})_{\mathrm{ac}}\geq \frac a2\gamma
\quad\text{almost everywhere}.
\]
Thus $T_{b,\varepsilon}\geq (a/2)\gamma$ globally, and
\cref{lem:ac-cone} extends the inverse-trace bound across $E_b(T)$.

Since $\phi_{b,\varepsilon}$ is quasi-psh, the singular part of
$T_{b,\varepsilon}$ is positive. As $E_b(T)$ has zero smooth
volume, the preceding lower bound gives
$(T_{b,\varepsilon})_{\mathrm{ac}}\ge (a/2)\gamma$ almost everywhere,
hence $T_{b,\varepsilon}\ge (a/2)\gamma$ globally.
By \cref{lem:ac-cone}, \eqref{eq:T_bepsiloninMminusE_b}
therefore holds on all of $M$.
\end{proof}

\subsection{Quantitative growth of a fixed regularization}
\label{sec:global-growth}
The uniform cone loss and the growth estimates have different
quantifiers. In the following proposition the smoothing parameter
$\varepsilon$ is fixed; no uniform growth assertion as
$\varepsilon\downarrow0$ is needed for surgery.

\begin{proposition}\label{prop:global-growth}
In the setting of \cref{thm:global-cone-regularization}, denote by $r_{b,\varepsilon}$ the minimizing radius for $\phi_{b,\varepsilon}$. For each
sufficiently small $b>0$, there is a proper analytic set $Z\supset E_b(T)$
such that, for each sufficiently small $\varepsilon>0$,
\begin{align}
r_{b,\varepsilon}(x)&\ge C_\varepsilon^{-1}\dist(x,Z)^L,
\label{eq:global-radius-growth}\\
L'\log\dist(x,Z)-C_\varepsilon
&\le\phi_{b,\varepsilon}(x)\le C_\varepsilon,\qquad
|\nabla\phi_{b,\varepsilon}(x)|_\gamma
\le C_\varepsilon\dist(x,Z)^{-N}
\label{eq:global-potential-growth}
\end{align}
on $M\setminus Z$, for some $L,L',N,C_\varepsilon>0$.
If $\phi$ is locally bounded off an analytic set $Y$, one may choose
$Z\subset Y$. Distances are truncated at one, with
$\dist(\,\cdot\,,\varnothing)=1$.
\end{proposition}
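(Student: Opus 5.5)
The plan is to control the minimizing radius $r_{b,\varepsilon}$ through the slope identity \eqref{eq:global-active-radius}. Everything else then follows from elementary bounds on $\widetilde\Psi_\phi$ evaluated at $r=r_{b,\varepsilon}$. The analytic set $Z$ will come from Demailly's Bergman-kernel approximation of $\phi$ by potentials with analytic singularities, combined with a Łojasiewicz inequality.

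\emph{Step 1: construct $Z$ and a lower bound for $\sup\phi$.} Let $\phi_m=\frac1{2m}\log\sum_j|\sigma_j|^2$, defined locally and glued with bounded error, be Demailly's approximation. I will use the two inequalities
$$\phi(x)-C/m\le\phi_m(x)\le\sup_{B(x,r)}\phi+\frac1m\log\frac{C}{r^n}.$$
I fix one $m$, say $m=1$ (the constants are allowed to depend on $b$). Let $Z$ be the zero set of the associated multiplier ideal, enlarged by $E_b(T)$; both are analytic, by Nadel and Siu respectively. Since the $\sigma_j$ are holomorphic, the Łojasiewicz inequality gives $\phi_m(x)\ge k\log\dist(x,Z)-C$. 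Taking $r=\dist(x,Z)$, this yields
$$\sup_{B(x,r)}\phi\ge k'\log\dist(x,Z)-C.$$
If $\phi$ is locally bounded off an analytic set $Y$, then the ideal is trivial off $Y$, so $Z\subset Y$ after also replacing $E_b(T)$ by $E_b(T)\cap Y=E_b(T)$. Next I convert the sup into a mean. Since $\phi\le 0$, the Poisson kernel of a ball of radius $3r$ is bounded below on $B(x,r)$, so $\sup_{B(x,r)}\phi\le c_n\,\mathrm{mean}_{\partial B(x,3r)}\phi+Cr^2$ for some $0<c_n<1$, with the $Cr^2$ correcting quasi-plurisubharmonicity. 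Averaging over spheres against $\varrho$ then gives
$$\Psi_\phi(x,3r)\ge C^{-1}\sup_{B(x,r)}\phi-C.$$
Together with the previous display this gives $\Psi_\phi(x,r)\ge L_1\log\dist(x,Z)-C$ for all $r\le\dist(x,Z)$.

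\emph{Step 2: the radius bound \eqref{eq:global-radius-growth}.} The function $\widetilde\Psi_\phi$ is convex in $\log r$ up to an $O(r)$ correction, and $\Psi_\phi(x,r_1)\le C$ for a fixed $r_1$. Hence
$$\Lambda_\phi(x,r)\le\frac{C-\Psi_\phi(x,r)}{\log(r_1/r)}+Cr\le\frac{C+L_1|\log\dist(x,Z)|}{\log(r_1/r)}+Cr.$$
So $\Lambda_\phi(x,r)\le b/2$ as soon as $r\le c\,\dist(x,Z)^{L}$ with $L\approx 2L_1/b$. On the other hand, \eqref{eq:global-active-radius} forces $\Lambda_\phi(x,r_{b,\varepsilon})\ge b-2\varepsilon(r/\varepsilon)^2/(1-(r/\varepsilon)^2)^2$. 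The penalty term is at most $b/2$ unless $r$ is comparable to $\varepsilon$, and in that case the claim is trivial for fixed $\varepsilon$. Since $\Lambda_\phi$ is nondecreasing in $r$ up to $O(r)$, this forces $r_{b,\varepsilon}\ge C_\varepsilon^{-1}\dist(x,Z)^L$.

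\emph{Step 3: the potential and gradient bounds.} For the upper bound, test the infimum \eqref{eq:global-attenuation} at $r=\varepsilon/2$; this gives $\phi_{b,\varepsilon}\le C_\varepsilon$. For the lower bound, all added terms are nonnegative since $r<\varepsilon$, so $\phi_{b,\varepsilon}(x)\ge\widetilde\Psi_\phi(x,r_{b,\varepsilon})$. This is $\ge L'\log\dist(x,Z)-C_\varepsilon$ by Step 1, because $\Psi_\phi(x,\cdot)$ is increasing up to $O(r)$ and $r_{b,\varepsilon}$ is polynomially bounded below. For the gradient, the envelope formula gives $\partial_x\phi_{b,\varepsilon}=\partial_x\widetilde\Psi_\phi(x,r)|_{r=r_{b,\varepsilon}}$. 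Differentiating \eqref{eq:global-exponential} in $x$ moves the derivative onto the kernel after the change of variables $y=\operatorname{exph}_x(r\zeta)$. This gives
$$|\partial_x\Psi_\phi(x,r)|\le Cr^{-1}\int|\phi(y)|\,|\nabla\varrho|\,dV\le Cr^{-1-2m}\int_{B(x,Cr)}|\phi|.$$
The integral is bounded by $C$ from $L^1$-compactness, since $\sup\phi=0$. Inserting $r=r_{b,\varepsilon}\ge C_\varepsilon^{-1}\dist(x,Z)^L$ gives the bound with $N=(2m+1)L$.

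The main obstacle is Step 1, specifically the sup-to-mean comparison with the correct sign for nonpositive quasi-psh functions, together with checking that the $O(r)$ errors in the convexity of $\widetilde\Psi_\phi$ do not spoil the monotonicity used in Step 2. The Demailly--Łojasiewicz input is standard once one accepts constants depending on $b$.
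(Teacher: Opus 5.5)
\textbf{There is a genuine gap in Step~1, and Steps~2 and~3 depend on it.} Your Bergman-kernel inequality only gives $\sup_{B(x,r)}\phi\ge\phi_m(x)+\frac nm\log r-C\ge k\log\dist(x,Z)+\frac nm\log r-C$. You absorbed the $\log r$ term by taking $r=\dist(x,Z)$. The conclusion you then draw, $\Psi_\phi(x,r)\ge L_1\log\dist(x,Z)-C$ for \emph{all} $r\le\dist(x,Z)$, does not follow, and it is false in general.

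\emph{Why it fails.} With $m=1$ fixed, the zero set of the multiplier ideal lies in some $E_c(T)$ with $c$ independent of $b$. So for small $b$ your $Z$ is just $E_b(T)$. There can then be points $x\notin Z$ with $b/2<\nu(\phi,x)<b$, for instance along a divisor of Lelong number in that range passing through a point of $E_b(T)$. At such points $\Psi_\phi(x,r)\to\phi(x)=-\infty$, and by \eqref{eq:global-radial-slope} $\Lambda_\phi(x,r)\to\nu(\phi,x)>b/2$ as $r\to0$. Consequences:
\begin{enumerate}
\item Your Step~2 claim that $\Lambda_\phi(x,r)\le b/2$ for $r\le c\,\dist(x,Z)^L$ is false there, so the comparison with \eqref{eq:global-active-radius} yields nothing.
\item The Step~3 lower bound invokes Step~1 at $r\approx\dist(x,Z)^L\ll\dist(x,Z)$, so it is unsupported as well.
\end{enumerate}

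\textbf{The missing idea} is that the multiplier-ideal exponent must be coupled to $b$. What one can prove is a lower bound for $\Psi_\phi$ that may decay in $r$, but with a $\log r$ coefficient strictly smaller than $b$.

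\emph{What the paper does.} In its notation $m=\dim M$. It takes $Z$ to be the zero set of $\mathcal I(k\phi)$ with $kb\ge2m$ and $2mC_0/k<b/2$. Skoda gives $E_b(T)\subset E_{2m/k}(T)\subset Z$, so $Z$ generally contains points of Lelong number below $b$ and no enlargement is needed. Take local generators normalized by $\int|\sigma_l|^2e^{-ku}=1$ and set $R=\sum_l|\sigma_l|^2$. Jensen's inequality, together with the density bound $Cr^{-2m}$ for the pushforward of $\varrho(|\zeta|^2)\,dV$ and $u\le0$, gives $\Psi_\phi(x,r)\ge\frac{C_0}{k}\log R(x)+\frac{2mC_0}{k}\log r-C$. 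The density bound replaces your sup-to-mean comparison, which is also shaky because the $\varrho$-average includes arbitrarily small spheres.

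The paper then avoids the slope identity altogether. It evaluates \eqref{eq:global-attenuation} at the minimizer and uses $\phi_{b,\varepsilon}\le C$. The term $\bigl(\frac{2mC_0}{k}-b\bigr)\log r_{b,\varepsilon}\ge\frac b2|\log r_{b,\varepsilon}|$ then forces $r_{b,\varepsilon}\ge C^{-1}\varepsilon^2R(x)^{2C_0/(kb)}$. The same line gives the logarithmic lower bound for $\phi_{b,\varepsilon}$, and \L ojasiewicz for $R$ finishes.

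\emph{Your route after repair.} Your convexity/secant argument in Step~2 would also work once Step~1 is corrected this way. The secant bound then gives $\Lambda_\phi(x,r)\le\frac{b+\sigma}{2}$ for $r\le c\,\dist(x,Z)^L$, where $\sigma<b$ is the $\log r$ coefficient. Your gradient argument in Step~3 already matches the paper's.
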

\begin{proof}
Denote by $\mathcal I(\phi)$ the multiplier ideal  sheaf that is the sheaf of germs of holomorphic functions
$f$ such that $|f|^2e^{-\phi}$ is locally integrable and is coherent by Nadel's theorem.  For a large integer
$k$ to be chosen below, let $Z$ be the zero locus of $\mathcal I(k\phi)$.
A theorem of Skodal (see \cite[Lemma~6.6]{DemaillyAMAG}) shows 
$$E_{2m/k}(T)\subset Z\subset E_{2/k}(T).$$
Thus $Z$ is a proper analytic set and contains $E_b(T)$ once $kb\ge2m$.
If $\phi$ is locally bounded off $Y$, then $\mathcal I(k\phi)=\mathcal O$
there, so $Z\subset Y$.

Fix finitely many holomorphic coordinate balls $B_i'\Subset B_i$  such that $\{B_i'\}$ covers $M$. On $B_i$, choose a smooth local potential $h$
of $\theta$ and shift $u=\phi+h$ so that $u\le0$.
We may choose holomorphic
functions $\sigma_1,\ldots,\sigma_N$ on $B_i$, normalized by
$\int_{B_i}|\sigma_l|^2e^{-ku}\,dz=1$, whose germs generate
$\mathcal I(k\phi)$ near $\overline{B_i'}$. Put
$$R(x):=\sum_{l=1}^N|\sigma_l(x)|^2.$$
Then $R$ is real analytic and $R^{-1}(0)=Z$ near $\overline{B_i'}$.

For fixed $A,r_0>0$, the pushforward of
$\varrho(|\zeta|^2)dV(\zeta)$ under
$\zeta\mapsto\operatorname{exph}_x(r\zeta)$ is supported in the
coordinate ball $B(x,Ar)\Subset B_i$ and has density at most $Cr^{-2m}$,
for $x\in B_i'$ and $0<r<r_0$.
Since $u\le0$ and $h$ is smooth,
$$\Psi_\phi(x,r)\ge C_0\fint_{B(x,Ar)}u-C,$$
where $C_0$ is independent of $k$ and $b$.
Subharmonicity and Jensen's inequality give
\begin{align*}
 \log|\sigma_l(x)|^2
 &\le \fint_{B(x,Ar)}\log|\sigma_l|^2\\
 &\le \log\fint_{B(x,Ar)}|\sigma_l|^2e^{-ku}
       +k\fint_{B(x,Ar)}u.
\end{align*}
Exponentiating and summing over the generators yields
$$\frac1k\log R(x)\le \fint_{B(x,Ar)}u-\frac{2m}{k}\log r+C.$$
Therefore
\begin{equation}\label{eq:exponential-weighted-Jensen}
 \Psi_\phi(x,r)\ge
 \frac{C_0}{k}\log R(x)+\frac{2mC_0}{k}\log r-C.
\end{equation}
Choose $k$ so that $kb\ge2m$ and $2mC_0/k<b/2$.

Since $\phi\le0$, taking $r=\varepsilon/2$ in
\eqref{eq:global-attenuation} gives $\phi_{b,\varepsilon}\le2\varepsilon+b$.
At $r=r_{b,\varepsilon}$, \eqref{eq:exponential-weighted-Jensen} gives
$$
 C\ge\phi_{b,\varepsilon}(x)\ge
 \frac{C_0}{k}\log R(x)
 +\left(\frac{2mC_0}{k}-b\right)\log r_{b,\varepsilon}
 +b\log\varepsilon-C.
$$
For $0<\varepsilon<\min\{r_0,1\}$, this implies, after changing $C$,
\begin{align*}
 r_{b,\varepsilon}(x)&\ge C^{-1}\varepsilon^2R(x)^{2C_0/(kb)},\\
 \phi_{b,\varepsilon}(x)&\ge
 \frac{C_0}{k}\log R(x)-b|\log\varepsilon|-C.
\end{align*}
The \L ojasiewicz inequality for $R$ gives
\eqref{eq:global-radius-growth} and the logarithmic lower bound in
\eqref{eq:global-potential-growth} on $B_i'\setminus Z$.

Write \eqref{eq:global-exponential} as
$\Psi_\phi(x,r)=\int K_r(x,y)\phi(y)\,dV_\gamma(y)$.
Its smooth kernel satisfies $|\nabla_xK_r|\le Cr^{-2m-1}$, so
$|\nabla_x\Psi_\phi(x,r)|\le Cr^{-2m-1}\|\phi\|_{L^1}$.
At the minimizing radius,
$\nabla\phi_{b,\varepsilon}=\nabla_x\Psi_\phi(x,r_{b,\varepsilon})$.
The radius bound proves the gradient assertion. Taking maxima of the
constants over the finite cover completes the proof.
\end{proof}

\section{The minimal \texorpdfstring{$J$}{J}-barrier locus}
\label{sec:minimal-barrier}\label{sec:global-barriers}

We use  regularization results of \cref{sec:global-inputs} to construct
strict subsolutions with analytic pole sets, then realize the intersection of their
pole sets by a barrier with controlled growth.

\begin{definition}[$J$-barrier]\label{def:J-barrier}
Let $\BarJ=\BarJ(X,\omega,\chi)$ consist of tuples
$\mathbf b=(\underline v,\tau,\eta,Z)$, where
$0<\tau<1$, $0<\eta<c$, and $\underline v\le0$ is quasi-psh,
continuous on $X\setminus Z$, with
$Z=\{\underline v=-\infty\}$ a proper analytic subset of $X$,
such that
$T:=(1-\tau)\omega+\ddc\underline v$
is a closed positive current satisfying
\begin{equation}\label{eq:barrier-P}
P_\chi(T)\le c-\eta.
\end{equation}
We call $\mathbf b$, or $\underline v$ when the remaining data
are understood, a \emph{$J$-barrier}.
\end{definition}

\begin{theorem}
\label{thm:strict-barrier} For $J$-semistable $(\alpha,\beta)$, let $T_c$ be the current in \cref{prop:mass-current}.
 Then for every sufficiently small $b>0$, there is a normalized barrier
$(\underline v_b,\tau,\eta,E_b(T_{c}))\in\BarJ$, smooth off its pole
set, with $\tau,\eta$ independent of $b$.
\end{theorem}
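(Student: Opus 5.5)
Start from the rescaled current of \cref{cor:strict-current}: $T^-=(1-\tau)T_c\in(1-\tau)\alpha$ satisfies $T^-\ge a_0\chi$ and $P_\chi(T^-)\le c-\eta_0$. Write $T^-=(1-\tau)\omega+\ddc\phi$ with $\phi$ quasi-psh, normalized by $\sup_X\phi=0$. Since $\chi$ is K\"ahler, apply \cref{thm:global-cone-regularization} on $M=X$ with $\gamma=\chi$, $\theta=(1-\tau)\omega$, $F=P$, $a=a_0$ and $d=c-\eta_0$. For $0<b<b_0$ and $0<\varepsilon<\varepsilon_0$ this gives $\phi_{b,\varepsilon}$ with
\[
T_{b,\varepsilon}\ge\tfrac{a_0}{2}\chi,\qquad P_\chi(T_{b,\varepsilon})\le c-\eta_0+C(b+\varepsilon),
\]
and $T_{b,\varepsilon}$ is smooth off $E_b(T^-)$. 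Lelong numbers scale linearly, so $E_b(T^-)=E_{b/(1-\tau)}(T_c)$. To match the statement exactly, replace $b$ by $(1-\tau)b$, which is harmless since $\tau$ is fixed. Now fix $b$ and $\varepsilon$ so small that $C(b+\varepsilon)\le\eta_0/2$, and set $\eta:=\eta_0/2$. Both $\tau$ and $\eta$ depend only on $\varepsilon_0,c,n$ from \cref{prop:mass-current} and on the constant $C$, and not on $b$.

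Next, verify the barrier axioms for $\underline v_b:=\phi_{b,\varepsilon}-\sup_X\phi_{b,\varepsilon}$. By \cref{prop:global-growth}, $\phi_{b,\varepsilon}\le C_\varepsilon$, so the normalization is finite. The current $T=(1-\tau)\omega+\ddc\underline v_b$ is positive and satisfies \eqref{eq:barrier-P}. It is smooth, hence continuous, on $X\setminus E_b$. It remains to show that the pole set is exactly $Z=E_b$, with $\{\underline v_b=-\infty\}=E_b$. On the complement the function is smooth, hence finite. For the reverse inclusion, use the Kiselman--Legendre structure. At a point $x$ with $\nu(T^-,x)\ge b$, the slope $\Lambda_\phi(x,r)$ tends to $\nu\ge b$ by \eqref{eq:global-radial-slope}. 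Then $r\mapsto\widetilde\Psi_\phi(x,r)-b\log(r/\varepsilon)$ is nonincreasing as $r\downarrow0$, up to the $|w|$ term. If $\nu>b$, or if $\Psi_\phi(x,r)\to-\infty$ fast enough, the infimum in \eqref{eq:global-attenuation} is $-\infty$.

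The main obstacle is the borderline case $\nu(T^-,x)=b$ exactly. There the infimum may stay finite, so the pole set could be strictly smaller than $E_b$, or $\underline v_b$ could fail to be continuous there. I would handle this in one of two ways.

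The first option is to choose $b$ outside the at most countable set of values $b'$ for which the analytic sets $E_{b'}$ jump. The map $b'\mapsto E_{b'}$ is a decreasing family of analytic sets by Siu, so it has only countably many jumps. For generic small $b$, then, $E_b=E_{b+\delta}$ for some $\delta>0$, and every point of $E_b$ has Lelong number strictly greater than $b$. The infimum is then $-\infty$ there, and continuity into $-\infty$ follows from upper semicontinuity.

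The second option gives the full range of $b$. Add to $\underline v_b$ a small multiple $\kappa\log R$, where $R$ is the sum of squared generators of the ideal of $E_b$, built as in the proof of \cref{prop:global-growth}. Glue it with a partition of unity in the standard way, so that it is a $\chi$-small quasi-psh function with poles exactly on $E_b$, smooth off $E_b$, and with $\ddc\ge -C\kappa\chi$. By \eqref{eq:F-perturbation} and the margin $a_0/2$, this costs only $O(\kappa)$ in $P_\chi$. Taking $\kappa$ small against $\eta$, and absorbing the change into $\eta$, forces $\{\underline v_b=-\infty\}=E_b$ exactly while keeping smoothness off $E_b$.
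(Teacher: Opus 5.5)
Your proposal is correct and, through your second option, is essentially the paper's proof: attenuated regularization of $T^-$ at level $(1-\tau)b$ via \cref{thm:global-cone-regularization}, then adding a small $b$-dependent multiple of a quasi-psh function with exact logarithmic poles on $E_b(T_c)$, using \eqref{eq:F-perturbation} together with \cref{lem:cone-extension} (which you leave implicit when extending the bound across $E_b$) to keep a $b$-independent margin; the paper takes $\eta=\eta_0/4$. Your first option, which avoids the countably many values attained by Lelong numbers, is also valid but only covers generic small $b$, so the second option is the one that proves the statement as written.
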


\begin{proof} 
Write $T^-=(1-\tau)\omega+\ddc\phi^-$ as in
\cref{cor:strict-current}, with $\sup_X\phi^-=0$, and put
$Z_b=E_b(T_c)=E_{(1-\tau)b}(T^-)$.
For sufficiently small $b,\varepsilon>0$,
\cref{thm:global-cone-regularization} gives
$\phi_1=(\phi^-)_{(1-\tau)b,\varepsilon}$ which is smooth off $Z_b$, with
\[
P_\chi\bigl((1-\tau)\omega+\ddc \phi_1\bigr)\le c-\eta_0/2.
\]

For any proper analytic set $Z$, Demailly's construction
\cite[Chapter~IX, Lemma~2.12, p.~413]{Demailly} provides a quasi-psh
function $\psi_Z\le0$, smooth off $Z$, with exact logarithmic poles
there and $$\ddc\psi_Z\ge-C_Z\chi.$$ Take $\psi_Z=0$ if $Z=\varnothing$.
Set
\[
\underline v_b=\phi_1+\delta\psi_{Z_b}
-\sup_X(\phi_1+\delta\psi_{Z_b}).
\]
For sufficiently small $\delta>0$, \eqref{eq:F-perturbation} and
\cref{lem:cone-extension} give \eqref{eq:barrier-P} with
$\eta=\eta_0/4$. 
Since $\phi_1$ is bounded above, $\underline v_b$ tends to $-\infty$
along $Z_b$; smoothness off $Z_b$ gives the exact pole set.
\end{proof}

Define the minimal $J$-barrier locus by
\begin{equation}\label{eq:minimal-barrier-locus-definition}
Z_J=\bigcap_{\mathbf b\in\BarJ}Z_{\mathbf b}.
\end{equation}
By \cref{thm:strict-barrier}, $Z_J\subset E_b(T_c)$ for all
sufficiently small $b>0$.

We show that $Z_J$ is realized as the polar set of a particular barrier with controlled growth near $Z_J$. 
\begin{theorem}
\label{prop:finite-realization}\label{prop:finite-barrier-maximum}
\label{thm:minimal-barrier}\label{thm:controlled-realizing-barrier}
The locus $Z_J$ is a proper analytic subset and is the exact pole set
of a normalized barrier $\underline v_*$, smooth on $X\setminus Z_J$,
such that, for some $L,\ell,C,N>0$,
\begin{align}
L\log\dist(x,Z_J)-C
&\le \underline v_*(x)\le \ell\log\dist(x,Z_J)+C,
\label{eq:two-sided-log-barrier}\\
\operatorname{Lip}_{\mathrm{loc}}\underline v_*
&\le C\dist(\,\cdot\,,Z_J)^{-N}
\quad\text{on }X\setminus Z_J.
\label{eq:polynomial-Lipschitz-barrier}
\end{align}
Distances are truncated at one, with $\dist(\,\cdot\,,\varnothing)=1$.
\end{theorem}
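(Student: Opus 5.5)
Since $X$ is compact and analytic subsets satisfy the descending chain condition (Noetherian property of the sheaf of ideals on a compact complex space), the intersection \eqref{eq:minimal-barrier-locus-definition} is attained by finitely many barriers. Each $Z_{\mathbf b}$ is a proper analytic subset, and by \cref{thm:strict-barrier} the family is nonempty. Hence there are $\mathbf b_1,\dots,\mathbf b_k\in\BarJ$ with
\[
Z_J=Z_{\mathbf b_1}\cap\cdots\cap Z_{\mathbf b_k},
\]
so $Z_J$ is a proper analytic subset. The first step is to combine these barriers into a single one. Put $\tau=\min\tau_i$ and $\eta=\min\eta_i$. Rescaling $\underline v_i$ by $(1-\tau)/(1-\tau_i)\le1$ keeps it a barrier with parameters $(\tau,\eta')$ for a slightly smaller $\eta'$, by the homogeneity of $P$ and \eqref{eq:F-perturbation}, after absorbing the extra $\tfrac{\tau_i-\tau}{1-\tau_i}\omega$ positive part. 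Then take $w=\max_i\underline v_i$. By \cref{lem:cone-maxima} the current $(1-\tau)\omega+\ddc w$ still satisfies $P_\chi\le c-\eta'$. Moreover $\{w=-\infty\}=\bigcap Z_{\mathbf b_i}=Z_J$, and $w$ is continuous off $Z_J$ because each $\underline v_i$ is continuous off its own pole set.

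The second step obtains smoothness and the growth bounds. Apply \cref{thm:global-cone-regularization} to $\phi=w$ with $\theta=(1-\tau)\omega$. By the remark after \cref{lem:subsolution-equivalence}, $T=\theta+\ddc w$ satisfies $T\ge(c-\eta')^{-1}\chi$, which supplies the constant $a$. For small $b,\varepsilon$ we get $P_\chi(T_{b,\varepsilon})\le c-\eta'/2$. Since $w$ is locally bounded off $Z_J$, \cref{prop:global-growth} gives $Z\subset Z_J$ together with the lower bound $L'\log\dist(x,Z)-C$, the gradient bound, and smoothness off $E_b(T)\subset Z\subset Z_J$. The lower bound is finite off $Z$, so the pole set of $\phi_{b,\varepsilon}$ is contained in $Z$. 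Minimality of $Z_J$ will force equality once the poles are reinstated. To do this, add $\delta\psi_{Z_J}$ as in the proof of \cref{thm:strict-barrier}, with $\delta$ small so that \eqref{eq:F-perturbation} keeps the margin, and normalize by subtracting the supremum. Call the result $\underline v_*$.

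The third step verifies \eqref{eq:two-sided-log-barrier} and \eqref{eq:polynomial-Lipschitz-barrier} for $\underline v_*$. The lower bound follows from $\dist(x,Z)\ge\dist(x,Z_J)$ and the exact logarithmic poles of $\psi_{Z_J}$. The upper bound $\ell\log\dist+C$ comes from $\phi_{b,\varepsilon}\le C$ plus $\delta\psi_{Z_J}\le\delta\log\dist(\cdot,Z_J)+C$, with $\ell=\delta$. For the Lipschitz bound, $\phi_{b,\varepsilon}$ has gradient at most $C\dist(\cdot,Z_J)^{-N}$. For $\psi_{Z_J}$ we use its local form $\log\sum|g_j|^2$ (glued by a regularized max), whose gradient is bounded by $C\dist^{-1}$ by the \L ojasiewicz inequality.

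The main obstacle is the combination step. The rescaling must be handled so that the max of barriers with different $\tau_i$ lies in a common class. The step giving the new barrier a pole set exactly $Z_J$, rather than the possibly smaller $Z$, also needs care, and this is why the $\psi_{Z_J}$ correction and the minimality of $Z_J$ are both used.
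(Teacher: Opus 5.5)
Your proposal is correct and follows essentially the paper's proof: finitely many barriers via the descending chain condition, their maximum via \cref{lem:cone-maxima}, Demailly--Kiselman regularization with \cref{thm:global-cone-regularization} and \cref{prop:global-growth} (using local boundedness off $Z_J$ to get $Z\subset Z_J$), and then a small logarithmic correction. The differences are minor. Adding $\delta\psi_{Z_J}$ instead of the paper's $\delta\psi_{Z'}$ makes the pole set exactly $Z_J$ without appealing to minimality. The combination step is not an obstacle: with $\tau=\min_i\tau_i$ one has $(1-\tau)\omega+\ddc\underline v_i\ge(1-\tau_i)\omega+\ddc\underline v_i$, and $P_\chi$ decreases under adding semipositive forms, so no rescaling is needed (and in any case the ratio $(1-\tau)/(1-\tau_i)$ is $\ge1$, not $\le1$).
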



\begin{proof}
The descending-chain condition for analytic subsets of the compact
manifold $X$ gives finitely many barriers $\mathbf b_i=(\underline{v}_i,\tau_i,\eta_i,Z_i)$, $i=1,\cdots,m$
with $Z_J=\bigcap_iZ_{i}$. Thus $Z_J$ is proper and analytic.
Put $\tau_0=\min_i\tau_i$ and $\eta_0=\min_i\eta_i$. By \cref{lem:cone-maxima},
the normalized maximum
\[
\underline v_0=\max_i\underline v_i-\sup_X\max_i\underline v_i
\]
satisfies \eqref{eq:barrier-P}  with $\tau_0,\eta_0$ and has exact
pole set $Z_J$. It is continuous off $Z_J$. 

Apply \cref{thm:global-cone-regularization,prop:global-growth} to
$T_0=(1-\tau_0)\omega+\ddc\underline v_0$, choosing a small $b>0$
and then one sufficiently small $\varepsilon>0$.
The resulting potential $v'$ is smooth on
$X\setminus E_b(T_0)$ and satisfies
\eqref{eq:barrier-P} with $\tau_0$ and some
$0<\eta_1<\eta_0$.
By \cref{prop:global-growth}, there is a proper analytic set
\[
E_b(T_0)\subset Z'\subset Z_J
\]
such that $v'$ has a logarithmic lower bound, a global upper
bound, and a polynomial gradient bound on $X\setminus Z'$.

Similar to the proof of \cref{thm:strict-barrier}, we may find qpsh function $ \psi_{Z'}$ smooth outside $Z'$ and has logarithmic poles along $Z'$. Take $\underline v_*= v'+\delta \psi_{Z'}$ for some sufficiently small $\delta$. Then,  the resulting barrier $\underline v_*$ has exact pole set $Z'$ satisfies \eqref{eq:barrier-P} for some $0<\tau_*<\tau_0,0<\eta_*<\eta_1$ and the
estimates for $v'$ and $\psi_{Z'}$ give
\eqref{eq:two-sided-log-barrier}--\eqref{eq:polynomial-Lipschitz-barrier}.
Then $(\underline v_*,\tau_*,\eta_*,Z')$ in $\BarJ$ gives $Z_J\subset Z'$, hence $Z'=Z_J$.
\end{proof}

\section{Identification of the null locus}
\label{sec:component-removal}
\input{IdentificationofNulllocus}

\section{Relative estimates}
\label{sec:determinantal-transfer}
In this section, we prove \cref{thm:main}. We first establish relative $L^\infty$ and $C^2$ estimates for approximates solutions in \cref{thm:smooth-approximants} with respect to a barrier. By choosing a barrier smooth outside $\NullJ$, these estimates transfer to $C^\infty(X\setminus\NullJ)$ local regularity of Murakami's weak solution.

Assume throughout this section that $(\alpha,\beta)$ is $J$-semistable.
Let $u$ be the normalized admissible weak solution of
\cref{def:weak-J}, let $u_j$ be the normalized smooth approximants of
\cref{thm:smooth-approximants}, and write
\[
 \omega_j=(1+s_j)\omega+\ddc u_j.
\]
Fix an arbitrary barrier
\[
 \mathbf b=(\underline v,\tau,\eta,Z_{\mathbf b})\in\BarJ.
\]

\begin{theorem}
\label{thm:abstract-barrier-criterion}
There are
$j_{\mathbf b}\in\mathbb{N}$ and $C_{\mathbf b}>0$ such that
\[
 u_j\ge\underline v-C_{\mathbf b}
 \quad\text{on }X\setminus Z_{\mathbf b},
 \qquad j\ge j_{\mathbf b}.
\]
Consequently,
$u\in L^{\infty}_{\mathrm{loc}}(X\setminus Z_{\mathbf b})$.
\end{theorem}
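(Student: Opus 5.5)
The plan is a Sz\'ekelyhidi-type $C$-subsolution argument. The barrier $\underline v$ plays the role of the subsolution, and the ABP estimate is run near a minimum point of $u_j-\underline v$, which necessarily lies in $X\setminus Z_{\mathbf b}$. Set $w_j:=u_j-\underline v$ on $X\setminus Z_{\mathbf b}$. Since $u_j$ is smooth and $\underline v\to-\infty$ along $Z_{\mathbf b}=\{\underline v=-\infty\}$, we have $w_j\to+\infty$ at $Z_{\mathbf b}$. Because $\underline v$ is continuous off $Z_{\mathbf b}$, $w_j$ attains its minimum $m_j$ at some $x_j\notin Z_{\mathbf b}$. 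We must show $m_j\ge -C_{\mathbf b}$ for large $j$.

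\emph{Step 1: the pointwise cone input.} Near a point $x$ where $w_j-\ell$ has a local minimum, with $\ell$ a small affine perturbation in a chart, the function $u_j-\ell-\text{const}$ is a $C^2$ upper test for $\underline v$. The viscosity formulation in \cref{lem:subsolution-equivalence}, applied to \eqref{eq:barrier-P}, then gives
\[
P_\chi\bigl(\omega_j-(\tau+s_j)\omega-\ddc\ell\bigr)\le c-\eta .
\]
Here $\ddc\ell$ is only the contribution of the smooth local potential of $\omega$ after the affine change. With $|D\ell|\le\theta$ small, this is a perturbation of size $O(\theta)\omega$. The equation in \eqref{eq:smooth-approximation} gives $Q_\chi(\omega_j)=c-\lambda_j\chi^n/\omega_j^n<c$. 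I will prove an algebraic lemma: if a Hermitian form $A>0$ satisfies $Q_\chi(A)\le c$ and $P_\chi(A-\kappa\chi)\le c-\eta$ with $\kappa>0$, then $A\le K\chi$ with $K=K(c,\eta,\kappa,n)$. The argument is as follows. If the largest eigenvalue $\lambda_n$ of $A$ relative to $\chi$ is huge, then $Q_\chi(A)\approx\sum_{i<n}\lambda_i^{-1}$. On the other hand, $P_\chi(A-\kappa\chi)\ge\sum_{i<n}(\lambda_i-\kappa)^{-1}$, and \cref{lem:elementary}, or a direct computation, shows this exceeds $\sum_{i<n}\lambda_i^{-1}$ by a definite amount. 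That contradicts $Q\le c$ once $c-\eta$ is compared with $c$. This is the usual ``$C$-subsolution'' property of the $J$-cone. We use $\omega\ge c'\chi$ to pass from $\tau\omega$ to $\kappa\chi$, and we take $\theta$ small relative to $\tau$.

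\emph{Step 2: ABP.} In a fixed coordinate ball $B$ of radius $r_0$ centred at $x_j$, apply the Alexandroff--Bakelman--Pucci estimate in the form of Sz\'ekelyhidi's Proposition~10 to
\[
\tilde w_j:=w_j+\theta|z-x_j|^2 .
\]
Because $\underline v$ is only continuous, I will first replace it by its local convolutions, or by the smooth barrier of \cref{thm:controlled-realizing-barrier}. \cref{lem:ac-cone}(ii) allows this at the cost of $\eta/2$, and we keep the minimum point interior. Let $\mathcal P$ be the lower contact set. ABP gives
\[
\int_{\mathcal P}\det D^2\tilde w_j\ge c_0\theta^{2n}.
\]
On $\mathcal P$ the real Hessian is nonnegative, so Step 1 applies. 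It bounds $\ddc u_j$, and hence (via $D^2\tilde w_j\ge0$ on the contact set) the real Hessian of $\tilde w_j$, by a constant independent of $j$. Therefore $|\mathcal P|\ge c_1\theta^{2n}$. Moreover $\mathcal P\subset\{w_j\le m_j+\theta r_0^2\}$.

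\emph{Step 3: contradiction with $L^1$ compactness.} On $\mathcal P$ we have $u_j\le\underline v+m_j+\theta r_0^2\le m_j+1$, because $\underline v\le0$. If $m_j\le -M$, then
\[
c_1\theta^{2n}\le|\{u_j\le -M+1\}|\le \frac{\|u_j\|_{L^1}}{M-1}.
\]
By \cref{thm:smooth-approximants}, $u_j\to u$ in $L^1$, so $\|u_j\|_{L^1}$ is uniformly bounded for $j\ge j_{\mathbf b}$. This is impossible for $M$ large, which gives the bound with $C_{\mathbf b}=M$.

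For the consequence, $u_j\ge\underline v-C_{\mathbf b}$ passes to the $L^1$ limit almost everywhere. Since $u$ is quasi-psh, hence equal to the upper regularisation of its a.e.\ values, and $\underline v$ is continuous off $Z_{\mathbf b}$, we get $u\ge\underline v-C_{\mathbf b}$ there. Together with $u\le0$, this gives $u\in L^\infty_{\mathrm{loc}}(X\setminus Z_{\mathbf b})$.

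I expect the main obstacle to be the algebraic boundedness lemma in Step 1. It must hold with the affine ABP perturbation and the factor $1+s_j$ present. The care needed is in comparing the $P$-bound on $\omega_j-\tau\omega$ with the $Q$-identity from the equation, uniformly in $j$. The secondary technical point is justifying the viscosity-to-pointwise passage, or the smoothing of $\underline v$, near the contact set.
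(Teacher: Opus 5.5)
\textbf{The key algebraic lemma in Step~1 is false as stated.} Your route (Sz\'ekelyhidi's ABP at the minimum of $u_j-\underline v$, then Chebyshev against the uniform $L^1$ bound) differs from the paper's. The paper turns the pointwise cone information into the determinant bound of \cref{prop:det-transfer} and applies \cref{thm:punctured}.

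Here is a counterexample to your lemma. Take $A=K\chi$ with $K$ large. Then $Q_\chi(A)=n/K\le c$ and $P_\chi(A-\kappa\chi)=(n-1)/(K-\kappa)\le c-\eta$, yet $A$ is unbounded.

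Your sketch only gives $Q_\chi(A)<c-\eta+a_{\max}^{-1}$, where $a_{\max}$ is the largest eigenvalue of $A$ relative to $\chi$. That is compatible with $Q_\chi(A)\le c$. The contradiction must come from the opposite inequality, the equation $G_j(\omega_j)=c$ with $\lambda_j\to0$. The ``definite amount'' is neither uniform nor needed.

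The correct statement is \cref{lem:det-coercivity}. At a contact point $\omega_j\ge B$ with $P_\chi(B)\le c-\eta$. Let $\mu_1\ge\cdots\ge\mu_n$ be the eigenvalues of $\chi$ relative to $\omega_j$. Monotonicity gives $\sum_{k<n}\mu_k\le c-\eta$, hence $\mu_k\le c-\eta$ for $k<n$, and
\[
 \eta\le c-\sum_{k<n}\mu_k=\mu_n+\lambda_j\prod_{k}\mu_k
 \le\mu_n\bigl(1+\lambda_j(c-\eta)^{n-1}\bigr).
\]
So $\omega_j\le(2/\eta)\chi$ once $\lambda_j(c-\eta)^{n-1}\le1$; this is where $j\ge j_{\mathbf b}$ enters. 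The margin $\kappa$ plays no role in this bound. The $\tau$-margin is needed only to absorb $\theta\,\ddc|z-x_j|^2$, so that $\omega_j\ge B$ at contact points; the affine part has $\ddc\ell=0$. Together with $-\ddc\underline v\le(1-\tau)\omega$, this gives the contact-set Hessian bound you need.

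\textbf{The second gap is the regularity of $\underline v$.} For a general $\mathbf b$ it is only continuous off $Z_{\mathbf b}$, while ABP needs $D^2\tilde w_j$.

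Switching to the realizing barrier of \cref{thm:controlled-realizing-barrier} is not allowed. It proves $u_j\ge\underline v_*-C$, not the stated bound for the given $\mathbf b$.

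Local convolution does work, but it needs an argument you only allude to. The point $x_j$ may drift to $Z_{\mathbf b}$ as $j\to\infty$, while ABP gives only $|\mathcal P|\gtrsim(\theta\rho)^{2n}$ on a ball of radius $\rho$. So $\rho$ must be independent of $j$, and the ball will in general meet $Z_{\mathbf b}$. There $\underline v_r$ is finite, and $u_j-\underline v_r$ can drop below $m_j$.

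You must choose $r=r(j)$ so that $u_j-\underline v_r\ge m_j-\epsilon$ on the whole ball. This is possible for two reasons:
\begin{itemize}
\item $\underline v_r\to\underline v$ uniformly outside a $\delta$-neighbourhood of $Z_{\mathbf b}$;
\item upper semicontinuity of $\underline v$ on the compact set $Z_{\mathbf b}$ forces $\underline v_r\le-\bigl(\max_X|u_j|+|m_j|\bigr)$ on that neighbourhood for $r,\delta$ small.
\end{itemize}
The ABP constant, the Hessian bound (via \cref{lem:ac-cone}(ii)), and $\underline v_r\le Cr$ are all independent of $r$ and $j$, so Steps~2--3 then close.

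With both repairs your argument is a valid, more elementary alternative that avoids the Guo--Phong--Tong machinery. The cost is this smoothing and localization. The paper never needs them, because \cref{thm:punctured} works in the viscosity sense directly with a lower semicontinuous $W_j$ that is $+\infty$ on $Z_{\mathbf b}$.
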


The relative $L^\infty$ estimate for degenerate fully nonlinear elliptic equations has been studied in  \cite{FMWRelative}. It uses the auxiliary Monge--Amp\`ere
method of Guo--Phong--Tong \cite{GuoPhongTong23}. Similar estimates was obtained by Sui--Sun \cite{SuiSun23} for degenerate complex Hessian quotient equations. 

\begin{theorem}\label{thm:relative}
Assume in addition that $\underline v$ is smooth on
$X\setminus Z_{\mathbf b}$.  There are constants $K,C>0$, independent of
$j$, such that, for all sufficiently large $j$,
\begin{equation}\label{eq:weighted-bound}
 \tr_\chi\omega_j
 \le C\exp\bigl(K(u_j-\underline v)\bigr)
 \le C e^{-K\underline v}
 \quad\text{on }X\setminus Z_{\mathbf b}.
\end{equation}
$K$ depends only on $n,c,\eta$ and a curvature bound for
$\chi$; the constant $C$ may also depend on the barrier, but neither
constant depends on $j$.
\end{theorem}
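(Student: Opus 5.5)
Our plan is a weighted maximum-principle argument in the spirit of the Song--Weinkove and Guo--Phong--Tong second-order estimates. We run it on the open set $X\setminus Z_{\mathbf b}$ and use the relative $L^\infty$ bound of \cref{thm:abstract-barrier-criterion} for the second inequality. Set $\varphi_j:=u_j-\underline v$. By \cref{thm:abstract-barrier-criterion}, $\varphi_j\ge -C_{\mathbf b}$ on $X\setminus Z_{\mathbf b}$ for $j\ge j_{\mathbf b}$. Since $\underline v\to-\infty$ along $Z_{\mathbf b}$ and $u_j$ is smooth, $\varphi_j\to+\infty$ there. We therefore consider
\[
 G_j:=\log\tr_\chi\omega_j-K\varphi_j
\]
on $X\setminus Z_{\mathbf b}$. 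Because $u_j$ is bounded (for fixed $j$) and $\underline v\to-\infty$, we get $G_j\to-\infty$ near $Z_{\mathbf b}$. Hence $G_j$ attains an interior maximum at some $x_0\in X\setminus Z_{\mathbf b}$, and it suffices to bound $G_j(x_0)$ independently of $j$. The second inequality in \eqref{eq:weighted-bound} then follows from $u_j\le0$.

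At $x_0$ we use the linearized operator of the perturbed equation \eqref{eq:smooth-approximation}. Write the equation as $\tr_{\omega_j}\chi+\lambda_j\chi^n/\omega_j^{n-1}\wedge\cdots$; concretely, $F(\omega_j):=n\chi\wedge\omega_j^{n-1}/\omega_j^n+\lambda_j\chi^n/\omega_j^n=c$. The linearization is $L=\sum F^{i\bar j}\nabla_i\nabla_{\bar j}$ with $F^{i\bar j}\ge \omega_j^{i\bar p}\chi_{p\bar q}\omega_j^{q\bar j}$, which is positive definite. The standard computation (Song--Weinkove, using concavity of $A\mapsto-\tr_A\chi$ and differentiating the equation twice) gives
\[
 L\log\tr_\chi\omega_j\ge -C_0\sum_i F^{i\bar i}
\]
in $\chi$-normal coordinates with $\omega_j$ diagonal. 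Here $C_0$ depends only on a curvature bound for $\chi$. The third-order terms are handled by the usual Cauchy--Schwarz trick, and the extra $\lambda_j$-term is also concave and has the right sign. For the weight we have $L\varphi_j=\sum F^{i\bar i}\bigl((\omega_j)_{i\bar i}-(T+(\tau+s_j)\omega)_{i\bar i}\bigr)$, where $T=(1-\tau)\omega+\ddc\underline v$. We then apply the strict cone condition \eqref{eq:barrier-P}, $P_\chi(T)\le c-\eta$, exactly as in Song--Weinkove's $C$-subsolution lemma (Guan/Székelyhidi). Either $\sum_iF^{i\bar i}(T-\omega_j)_{i\bar i}\ge \kappa\bigl(1+\sum F^{i\bar i}\bigr)$, or $\omega_j$ is bounded at $x_0$ in terms of $\chi$. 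The constant $\kappa>0$ depends only on $n,c,\eta$; the $\lambda_j\to0$ and $s_j\to0$ perturbations are absorbed for $j$ large, and the extra $(\tau+s_j)\omega$ only helps. In the first case, $0\ge LG_j\ge(K\kappa-C_0)\sum F^{i\bar i}+K\kappa$, which is impossible once $K>C_0/\kappa$; this fixes $K=K(n,c,\eta,\chi)$. In the second case, $\tr_\chi\omega_j(x_0)\le C$. So $G_j(x_0)\le \log C+KC_{\mathbf b}$, which yields the first inequality in \eqref{eq:weighted-bound}.

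The main obstacle is the dichotomy step. We need a version of the subsolution lemma for the operator $P$-cone with the perturbation $\lambda_j\chi^n/\omega_j^n$, with $\kappa$ uniform in $j$ and valid at points where $T$ is smooth but possibly very large, since $\ddc\underline v$ blows up near $Z_{\mathbf b}$. We handle this by working pointwise at $x_0$: only the eigenvalue structure of $T$ relative to $\chi$ and the inequality $P_\chi(T)\le c-\eta$ enter, and not any bound on $|T|$. This is the argument of \cite[Lemma~5.8]{FangMa24} and Song--Weinkove, in which $\kappa$ depends only on $n,c,\eta$. The remaining dependence of $C$ on the barrier comes only through $C_{\mathbf b}$.
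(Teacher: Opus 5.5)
Your proposal is correct and follows essentially the same route as the paper. The shared steps are: the weighted quantity $\log\tr_\chi\omega_j-K(u_j-\underline v)$, which attains an interior maximum because $\underline v\to-\infty$ along $Z_{\mathbf b}$; the log-trace inequality for the linearized operator; and the coercivity dichotomy, which is the paper's \cref{lem:coercivity}. With $K>C_0/\delta$, the relative lower bound $u_j-\underline v\ge -C_{\mathbf b}$ then finishes the argument.

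The one step you describe only informally is the uniformity of $\kappa$ where $\ddc\underline v$ is very large. The paper makes this precise by replacing each $\chi$-eigenvalue of $B$ by its minimum with $M=2(n-1)/\eta$. This keeps $P_\chi(B_M)\le c-\eta/2$ inside a compact family. The positivity of $-DG_j$ then transfers the bound from $B_M$ back to $B$.
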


The relative $C^2$ estimates combines
the estimates established in 
\cite{SongWeinkove,Chen21} with the systematic  $\mathcal{C}$-subsolution method of Guan \cite{Guan14} and
Sz\'ekelyhidi \cite{Szekelyhidi18}.

\subsection{Barrier as $\mathcal D$-subsolution}

For the perturbed equation \eqref{eq:smooth-approximation}, define
\[
G_j(A):=\tr_A\chi+\frac{\lambda_j}{\det_\chi A},
\qquad \det_\chi A:=\frac{A^n}{\chi^n}.
\]
Both terms decrease when the positive Hermitian form $A$ increases.  Set
\[
 \mathcal A_j:=\{A>0:P_\chi(A)<c\}.
\]
In the terminology of \cite{FMWRelative}, $(c-G_j,\mathcal A_j)$ is an
admissible pair.
 
\begin{lemma}\label{lem:det-coercivity}
Let $B>0$ be a Hermitian form satisfying
$P_\chi(B)\le c-\eta_1$ for some $0<\eta_1<c$.  There is $j_0$ such that,
for every $j\ge j_0$ and every Hermitian form $R\geq0$,
putting $A:=B+R$, one has
\begin{equation}\label{eq:det-coercivity}
G_j(A)\ge c
\quad\Longrightarrow\quad
0\le R\le A\le \frac{2}{\eta_1}\chi.
\end{equation}
Consequently, $\det_\chi R\le(2/\eta_1)^n$.
\end{lemma}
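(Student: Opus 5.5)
The plan is to diagonalize $A$ with respect to $\chi$ at a point. This rewrites both $G_j(A)$ and $P_\chi(A)$ in terms of the relative eigenvalues $0<\mu_1\le\cdots\le\mu_n$ of $A$, so that $\tr_A\chi=\sum_i\mu_i^{-1}$ and $\det_\chi A=\prod_i\mu_i$. Restricting to a hyperplane $H$ and using eigenvalue interlacing shows that the maximum in the definition of $P_\chi$ is attained on the span of the eigenvectors for $\mu_1,\dots,\mu_{n-1}$. Hence
\[
P_\chi(A)=\sum_{i=1}^{n-1}\mu_i^{-1},\qquad
\tr_A\chi=P_\chi(A)+\mu_n^{-1}.
\]
In words, $P_\chi$ is the inverse trace with the contribution of the largest eigenvalue removed. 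The whole argument rests on this identity, which I will state as the first step.

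Second, I will use monotonicity. Since $R\ge0$, we have $A\ge B$. By \cref{lem:F-properties}, $P_\chi$ decreases under adding semipositive forms, so $P_\chi(A)\le P_\chi(B)\le c-\eta_1$. From the identity, $\mu_1^{-1}\le P_\chi(A)\le c-\eta_1<c$ (this uses $n\ge2$). Therefore every relative eigenvalue satisfies $\mu_i\ge c^{-1}$, and consequently $\det_\chi A\ge c^{-n}$. This gives a lower bound on the determinant that depends only on $n,c$ and is uniform in $B$, $R$ and the point. It is needed because $\lambda_j\to0$ by \cref{thm:smooth-approximants}.

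Third, I will choose $j_0$ so that $\lambda_j c^n\le\eta_1/2$ for all $j\ge j_0$; this choice depends only on $n,c,\eta_1$. If $G_j(A)\ge c$, then
\[
c\le G_j(A)=P_\chi(A)+\mu_n^{-1}+\frac{\lambda_j}{\det_\chi A}
\le c-\eta_1+\mu_n^{-1}+\frac{\eta_1}{2}.
\]
This forces $\mu_n^{-1}\ge\eta_1/2$, that is, $\mu_n\le 2/\eta_1$. Hence $A\le(2/\eta_1)\chi$. The remaining inequalities are immediate: $0\le R$ by hypothesis, and $R=A-B\le A$ because $B>0$.

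Finally, for the determinant bound, $0\le R\le A$ implies $\det_\chi R\le\det_\chi A$, since the determinant is monotone on semipositive forms. Together with $A\le(2/\eta_1)\chi$ this gives $\det_\chi R\le(2/\eta_1)^n$. The only step that needs care is the eigenvalue description of $P_\chi$ and the resulting uniform lower bound on $\det_\chi A$. Once these are in place, the rest is bookkeeping.
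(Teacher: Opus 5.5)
Your proof is correct and follows the paper's argument. You diagonalize, identify $P_\chi(A)$ as the inverse trace with the largest eigenvalue of $A$ dropped, use $A\ge B$ to get $P_\chi(A)\le c-\eta_1$, and let $G_j(A)\ge c$ force the remaining inverse eigenvalue to be at least $\eta_1/2$. The only difference is bookkeeping. The paper works with the reciprocal eigenvalues and bounds the determinant term multiplicatively, by $\lambda_j(c-\eta_1)^{n-1}$ times the smallest reciprocal. You bound it additively by $\lambda_j c^n$ using $\det_\chi A\ge c^{-n}$, and both choices of $j_0$ work.
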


\begin{proof}
Put $A=B+R$, and let
$\mu_1\ge\cdots\ge\mu_n>0$
be the eigenvalues of $\chi$ relative to $A$.  Since $A\ge B$, inverse-matrix
order gives
\[
\sum_{k=1}^{n-1}\mu_k
=P_\chi(A)\le P_\chi(B)\le c-\eta_1.
\]
If $G_j(A)\ge c$, then
\[
\eta_1
\le \mu_n+\lambda_j\prod_{k=1}^n\mu_k
\le \mu_n\bigl(1+\lambda_j(c-\eta_1)^{n-1}\bigr).
\]
Choose $j_0$ so that
$\lambda_j(c-\eta_1)^{n-1}\le1$ for $j\ge j_0$.  Then
$\mu_n\ge\eta_1/2$, or equivalently
$A\le2\eta_1^{-1}\chi$.  Since $0\le R\le A$, this proves
\eqref{eq:det-coercivity}. 
\end{proof}

Apply \cref{lem:det-coercivity} with $\eta_1=\eta$, and set
\[
 f:=\log\left(\left(\frac{2}{\eta}\right)^n\right)
   =n\log\frac{2}{\eta}.
\]

In \cref{thm:smooth-approximants},  the equation gives
$G_j(\omega_j)=c$.  Define
\[
W_j:=u_j-\underline v
\quad\text{on }X\setminus Z_{\mathbf b},
\qquad
W_j:=+\infty
\quad\text{on }Z_{\mathbf b}.
\]
Then $W_j$ is $\lsc$ on $X$, continuous on
$X\setminus Z_{\mathbf b}$, and
$W_j(x)\to+\infty$ as $x\to Z_{\mathbf b}$.
\begin{proposition}\label{prop:det-transfer}
For every $j\ge j_0$, every $x\in X\setminus Z_{\mathbf b}$, and every $C^2$ lower test
$q$ for $W_j$ at $x$, put
$A_q:=\tau\omega(x)+\ddc q(x)$.
If $A_q\ge0$, then
\begin{equation}\label{eq:det-test-bound}
\det_\chi A_q\le e^f.
\end{equation}
The constant is independent of $j$.
\end{proposition}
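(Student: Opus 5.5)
The plan is to translate a lower test for $W_j=u_j-\underline v$ into a statement about the smooth form $\omega_j$ at $x$, and then to apply \cref{lem:det-coercivity} with $B$ the barrier current and $R=A_q$.

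First, fix $x\in X\setminus Z_{\mathbf b}$ and a $C^2$ lower test $q$ for $W_j$ at $x$. This means $q\le u_j-\underline v$ near $x$, with equality at $x$. Then $\underline v+q\le u_j$ near $x$, with equality at $x$. Since $u_j$ is smooth, $u_j-q$ is an upper test for $\underline v$ at $x$: it is $C^2$, it satisfies $u_j-q\ge\underline v$ near $x$, and equality holds at $x$.

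The barrier condition \eqref{eq:barrier-P} holds for $T=(1-\tau)\omega+\ddc\underline v$. By \cref{lem:subsolution-equivalence} it holds in the viscosity sense. Writing a local smooth potential $h$ of $(1-\tau)\omega$ and applying the viscosity formulation to $h+\underline v$ with upper test $h+u_j-q$, I obtain
\[
B:=(1-\tau)\omega(x)+\ddc u_j(x)-\ddc q(x),\qquad P_\chi(B)\le c-\eta,\quad B>0.
\]
Positivity of $B$ follows from the Remark after \cref{lem:subsolution-equivalence}, since $P_{\chi(x)}(B)\le c-\eta$ at a point forces $B\ge(c-\eta)^{-1}\chi(x)>0$. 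The frozen-coefficient issue is harmless because the viscosity formulation already evaluates $\chi$ at $x$.

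Next I write
\[
\omega_j(x)=(1+s_j)\omega(x)+\ddc u_j(x)=B+\bigl(\tau\omega(x)+\ddc q(x)\bigr)+s_j\omega(x)=B+A_q+s_j\omega(x).
\]
Set $R:=A_q+s_j\omega(x)$. Under the hypothesis $A_q\ge0$, this gives $R\ge0$. The equation \eqref{eq:smooth-approximation} gives $G_j(\omega_j)=c$. \cref{lem:det-coercivity}, with $\eta_1=\eta$ and $j\ge j_0$ (where $j_0$ depends only on $\eta$ and $c$, through $\lambda_j\to0$), then yields $0\le R\le\frac2\eta\chi$. Since $0\le A_q\le R$, we get $A_q\le\frac2\eta\chi(x)$, hence $\det_\chi A_q\le(2/\eta)^n=e^f$. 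The only inputs are $\eta$, $c$ and $n$, so the constant is independent of $j$.

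The main point needing care is the first step: applying the viscosity formulation of the barrier inequality at a point where $\underline v$ is merely continuous. The test function must be $C^2$ and touch from above, and the reference form must be the pointwise $\chi(x)$. I would check that \cref{lem:subsolution-equivalence} applies to the unbounded potential $\underline v$ on $X\setminus Z_{\mathbf b}$; the proof of that lemma explicitly allows this. Alternatively, if only the local-convolution formulation were trusted, I would convolve $\underline v$ and use \cref{lem:ac-cone}(ii) with $\gamma_0$ close to $\chi(x)$. This would lose a factor $(1+\delta)$, which disappears as $\delta\to0$.
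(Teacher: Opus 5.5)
Your proposal is correct and follows the paper's proof: the lower test $q$ for $W_j$ becomes the upper test $u_j-q$ for $\underline v$, the viscosity form of \eqref{eq:barrier-P} gives $B=(1-\tau)\omega(x)+\ddc(u_j-q)(x)>0$ with $P_{\chi(x)}(B)\le c-\eta$, and \cref{lem:det-coercivity} is applied to $\omega_j(x)=B+\bigl(A_q+s_j\omega(x)\bigr)$. The only cosmetic difference is at the end: you use $A_q\le R\le\frac{2}{\eta}\chi(x)$ directly, whereas the paper bounds $\det_\chi\bigl(A_q+s_j\omega(x)\bigr)\le e^f$ and then uses monotonicity of the determinant.
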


\begin{proof}
If $q$ touches $W_j$ from below at $x$, then $u_j-q$ touches
$\underline v$ from above at the same point.  The upper-test formulation
of \eqref{eq:barrier-P} therefore gives
\[
B:=(1-\tau)\omega(x)+\ddc(u_j-q)(x),
\qquad B>0,\qquad P_{\chi(x)}(B)\le c-\eta.
\]
Set $H:=A_q+s_j\omega(x)\ge0$.  The backgrounds give the exact identity
$B+H=\omega_j(x)$.
Since $G_j(B+H)=G_j(\omega_j(x))=c$, \cref{lem:det-coercivity} gives
$\det_\chi H\le e^f$.
Since $0\le A_q\le H$, determinant monotonicity gives
\eqref{eq:det-test-bound}.
\end{proof}

In the terminology of \cite{FMWRelative}, the proposition says that
\[
 \bigl((1-\tau)\omega+\ddc\underline v,\tau\omega,f\bigr)
\]
is a $\mathcal D$-subsolution for the admissible pair
$(c-G_j,\mathcal A_j)$.

\subsection{The relative $L^\infty$ estimate}
\label{sec:punctured-estimate}\label{sec:weak-limit}

We use the following specialization of
\cite[Theorem~3.4]{FMWRelative}.

\begin{theorem}\label{thm:punctured}
Let $(X,\chi)$ be a compact K\"ahler manifold of dimension $n$, let
$\omega$ be a K\"ahler form, let $\tau>0$, and put
\(
V_\tau:=\int_X(\tau\omega)^n.
\)
Let $Z\subsetneq X$ be closed. Let $U$ be a lower semicontinuous function on $X$, continuous on $X\setminus Z$ with $U^{-1}(+\infty)=Z$.
Let $f\in C^0(X)$.  Assume that every $C^2$ lower test $q$ for $U$ on
$X\setminus Z$ satisfies
\[
\tau\omega+\ddc q\ge0
\quad\Longrightarrow\quad
\det_\chi(\tau\omega+\ddc q)\le e^f.
\]
Suppose that, for some $p>n$ and $H,L<\infty$,
\[
\operatorname{Ent}_p:=\frac1{V_\tau}
\int_X
\left[1+(f-\log V_\tau)_+\right]^p e^f\chi^n\le H,
\qquad
\int_X(-U)_+\chi^n\le L.
\]
Then $\sup_X(-U)\le C(X,\chi,\omega,\tau,p,H,L)$.
\end{theorem}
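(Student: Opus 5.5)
The plan is to follow the auxiliary Monge--Amp\`ere method of Guo--Phong--Tong, adapted to the viscosity (lower test) formulation of the hypothesis, as in \cite{FMWRelative}. Write $\varphi:=-U$. It is upper semicontinuous, continuous on $X\setminus Z$, and $\varphi\to-\infty$ at $Z$. So for every $s$, the superlevel set $\Omega_s:=\{\varphi>s\}$ is relatively compact in $X\setminus Z$, and any maximum of $\varphi$ minus a smooth function is attained off $Z$. This is the only place the pole set enters.

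\textbf{Auxiliary equation.} For $s\ge0$, let $A_s:=\int_X(\varphi-s)_+e^f\chi^n$. Replace $(\varphi-s)_+$ by smooth positive approximants $\eta_k\downarrow(\varphi-s)_+$; this is possible since $\varphi$ is usc and $\Omega_s\Subset X\setminus Z$. Then use Yau's theorem to solve
\[
(\tau\omega+\ddc\psi_{s,k})^n=\frac{\eta_k e^f\chi^n}{A_{s,k}}V_\tau,\qquad \sup_X\psi_{s,k}=0 .
\]
\textbf{Comparison.} Consider
\[
\Phi:=(\varphi-s)-\varepsilon\,(-\psi_{s,k}+\Lambda)^{n/(n+1)},\qquad \varepsilon:=\Bigl(\tfrac{n+1}{n}\Bigr)^{n/(n+1)}\Bigl(\tfrac{A_{s,k}}{V_\tau}\Bigr)^{1/(n+1)}.
\]
If $\max\Phi>0$, it is attained at some $x_0\in\Omega_s\subset X\setminus Z$. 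Then $q:=-s-\varepsilon(-\psi_{s,k}+\Lambda)^{n/(n+1)}-\max\Phi$ is a $C^2$ lower test for $U$ at $x_0$. Concavity of $t\mapsto t^{n/(n+1)}$ gives
\[
\tau\omega+\ddc q\ \ge\ \frac{n\varepsilon}{n+1}(-\psi+\Lambda)^{-1/(n+1)}(\tau\omega+\ddc\psi)
\]
plus a nonnegative multiple of $\tau\omega$, once $\Lambda$ is large enough to absorb the coefficient. So $\tau\omega+\ddc q\ge0$ and the hypothesis applies. Combining $\det_\chi(\tau\omega+\ddc q)\le e^{f}$ with the auxiliary equation and the choice of $\varepsilon$ gives $\eta_k(x_0)<(\varphi-s)(x_0)$ only up to an error vanishing as $k\to\infty$. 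This is a contradiction, so letting $k\to\infty$ we get
\[
(\varphi-s)_+^{\frac{n+1}{n}}\le C\,A_s^{1/n}(-\psi_s+\Lambda).
\]
\textbf{Iteration.} By the $\alpha$-invariant (Tian/H\"ormander) for $\tau\omega$-psh functions, $\int e^{-\alpha\psi_s}\chi^n\le C$. Together with the Young/Orlicz duality against the weight $[1+(f-\log V_\tau)_+]^pe^f$, the bound $\mathrm{Ent}_p\le H$ yields
\[
\int_{\Omega_s}(\varphi-s)^{\frac{n+1}{n}}e^f\chi^n\le C A_s^{1/n}\,\phi(s)\,\bigl[\log(1/\phi(s))\bigr]^{-p'} ,
\]
where $\phi(s):=\int_{\Omega_s}e^f\chi^n$. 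H\"older then gives a De Giorgi inequality $r\,\phi(s+r)\le C\phi(s)^{1+\delta_0}$, with $\delta_0>0$ precisely because $p>n$. The iteration lemma shows $\phi(s)=0$ for $s\ge s_0+C$. The initial level $s_0$ comes from $\int(-U)_+\chi^n\le L$ together with the entropy bound, via Chebyshev, which makes $\phi(s_0)$ small. Hence $\sup_X\varphi\le C$.

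The main obstacle is the comparison step. The hypothesis is only a viscosity inequality for an lsc $U$, so the auxiliary right-hand side must be smoothed while the touching point stays inside $X\setminus Z$ and the error terms stay controlled as $k\to\infty$. The other delicate point is getting the exponent bookkeeping exactly right, so that $p>n$ yields $\delta_0>0$. For this I would follow \cite[Theorem~3.4]{FMWRelative} closely.
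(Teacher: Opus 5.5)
Your route is the one the paper relies on. The paper gives no independent proof of this statement; it quotes it as a specialization of \cite[Theorem~3.4]{FMWRelative}, which is the Guo--Phong--Tong auxiliary Monge--Amp\`ere argument run with lower tests. Your localization (superlevel sets of $\varphi=-U$ are compact in $X\setminus Z$) and your comparison step are correct. You should also smooth $e^f$ from above, since $f$ is only continuous and Yau's theorem must produce a $C^2$ test.

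The gap is in the iteration. Your displayed inequality
\[
\int_{\Omega_s}(\varphi-s)^{\frac{n+1}{n}}e^f\chi^n\le CA_s^{1/n}\,\phi(s)\bigl[\log(1/\phi(s))\bigr]^{-p'}
\]
is false. By Jensen the left side is at least $A_s^{(n+1)/n}\phi(s)^{-1/n}$, so the inequality would force $A_s\le C\phi(s)^{1+1/n}[\log(1/\phi(s))]^{-p'}$. But for a smooth $U$ with a nondegenerate minimum (take $f=\log\det_\chi(\tau\omega+\ddc U)$, $Z=\varnothing$) one has $A_s\asymp\phi(s)^{1+1/n}$ as $s\uparrow\sup_X\varphi$. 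The inequality would also make $p>n$ irrelevant, although the uniform estimate fails for $p<n$, as radial examples $-(-\log|z|)^a$ show.

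The correct step proceeds in three moves:
\begin{enumerate}
\item Exponentiate the pointwise bound and use the $\alpha$-invariant to get $\int_{\Omega_s}\exp\bigl(\beta(\varphi-s)^{(n+1)/n}A_s^{-1/n}\bigr)\chi^n\le C$.
\item Apply the generalized Young inequality against $[1+(f-\log V_\tau)_+]^pe^f$ to obtain $\int_{\Omega_s}(\varphi-s)^{(n+1)p/n}e^f\chi^n\le CA_s^{p/n}$.
\item Use H\"older with this exponent to get $A_s\le C\phi(s)^{1+1/n-1/p}$. This is exactly where $p>n$ is used.
\end{enumerate}

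A second ingredient is missing. The constant in the exponential bound depends on $\Lambda\gtrsim A_s/V_\tau$. Unlike in Guo--Phong--Tong, you have no a priori bound on $A_0=\int_X(-U)_+e^f\chi^n$: $U$ is not $\tau\omega$-psh, so the $\alpha$-invariant is unavailable for it, and only $\int_X(-U)_+\chi^n$ is controlled. Making $\phi(s_0)$ small by Chebyshev does not by itself bound $A_{s_0}$. You need an absorption step. Integrating $\varphi-s\le\varepsilon(-\psi_s+\Lambda)^{n/(n+1)}$ over $\Omega_s$ against $e^f\chi^n$, with $\varepsilon\sim A_s^{1/(n+1)}$, gives
\[
A_s\le CA_s^{1/(n+1)}\int_X(1-\psi_s)e^f\chi^n+C'A_s\phi(s).
\]
Hence $A_s\le C$ for all $s\ge s_0$ once $C'\phi(s_0)\le\frac12$, and with this the De Giorgi iteration closes.
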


We apply the theorem to $W_j=u_j-\underline v$.  Its constants are
uniform in $j$.

\begin{proof}[Proof of \cref{thm:abstract-barrier-criterion}]
Apply \cref{thm:punctured} to $U=W_j$, $Z=Z_{\mathbf b}$, and the
constant function $f=\log((2/\eta)^n)$.  The lower-test bound is
\cref{prop:det-transfer}.  For any fixed $p>n$, the entropy term equals
\[
\operatorname{Ent}_p=\frac{e^f\int_X\chi^n}{V_\tau}
\left[1+(f-\log V_\tau)_+\right]^p.
\]
Since
$\underline v\le0$ and $\sup_Xu_j=0$,
$(-W_j)_+ =(\underline v-u_j)_+ \le -u_j$.
After discarding finitely many indices, $s_j\le1$, and
\eqref{eq:smooth-approximation} gives
$\ddc u_j\ge-2\omega$.
The normalized family of $2\omega$-psh functions has a uniform $L^1$ bound
by \cite[Proposition~2.7]{GZ05}.
Hence
\[
 \int_X(-W_j)_+\chi^n \le \int_X(-u_j)\chi^n \le L_0,
\]
with $L_0$ independent of $j$.  The punctured estimate yields
\begin{equation}
    \sup_X(-W_j) = \sup_{X\setminus Z_{\mathbf b}}(\underline v-u_j) \le C_{\mathbf b}\label{eq:relative-lower}.
\end{equation}

It remains to pass the bound to the $L^1$-limit.  Let
$K\Subset X\setminus Z_{\mathbf b}$, and choose an open set
$K\Subset V\Subset X\setminus Z_{\mathbf b}$.  By
\cref{thm:smooth-approximants}, $u_j\to u$ in $L^1(X)$.  Since
$\underline v$ is continuous on $V$, \eqref{eq:relative-lower} holds for
$u$ almost everywhere on $V$, and therefore everywhere for its upper
semicontinuous representative.  Since $u\le0$ by normalization,
$\inf_V\underline v-C_{\mathbf b}\le u\le0$ on $K$.
As $K$ was arbitrary,
\[
 \underline v-u\le C_{\mathbf b}
 \quad\text{on }X\setminus Z_{\mathbf b}.
\]
Thus $u\in L^\infty_{\mathrm{loc}}(X\setminus Z_{\mathbf b})$.
\end{proof}

\subsection{The relative   $C^2$   estimate}
\label{sec:weighted-regularity}
\label{sec:trace}
\label{sec:coercivity}
\label{sec:maximum}
Throughout this subsection and the next, assume that
$\underline v$ is smooth on $X\setminus Z_{\mathbf b}$.

For the solution $\omega_j$, define
\[
 \mathcal L_j h:=-D G_j|_{\omega_j}(\ddc h),
 \qquad w_j:=\tr_\chi\omega_j.
\]
Thus $\mathcal L_j$ is elliptic, and
$G_j(\omega_j)=c$. 

To establish $C^2$ estimates, we need the following 2 lemmas.
\begin{lemma}\label{lem:log-trace}
There is a constant
\(
 C_0=C_0\bigl(n,c,\|\operatorname{Rm}(\chi)\|_{C^0(\chi)}\bigr)>0
\)
such that
\begin{equation}\label{eq:log-trace}
 \mathcal L_j\log w_j\ge-C_0
\end{equation}
for every $j$.  In particular, $C_0$ is independent of $s_j$,
$\lambda_j$, $u_j$, and the barrier.
\end{lemma}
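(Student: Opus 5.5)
The plan is a Song--Weinkove type computation of $\mathcal L_j\log w_j$ at a point, in $\chi$-normal coordinates that diagonalize $\omega_j$. Fix $x\in X$. Choose holomorphic coordinates with $\chi_{i\bar j}=\delta_{ij}$ and $d\chi=0$ at $x$, and $(\omega_j)_{i\bar j}=\lambda_i\delta_{ij}$. Write $\omega_j=\sqrt{-1}g_{i\bar j}dz^i\wedge d\bar z^j$. The operator is
\[
\mathcal L_j h=\sum_{p,q}\Bigl(g^{p\bar a}\chi_{a\bar b}g^{b\bar q}+\frac{\lambda_j}{\det_\chi\omega_j}g^{p\bar q}\Bigr)h_{p\bar q}=:F^{p\bar q}h_{p\bar q}.
\]
It is a sum of two positive definite coefficient tensors, the $\tr_{\omega_j}\chi$ part and the $\log\det$ part scaled by $\lambda_j/\det_\chi\omega_j>0$.

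The key steps are as follows.

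First, differentiate the equation $G_j(\omega_j)=c$ twice along $\partial_1\bar\partial_1$ (the direction of the largest eigenvalue, or sum over $k$ as needed). This gives
\[
F^{p\bar q}\nabla_k\nabla_{\bar k}g_{p\bar q}\le -(\text{concavity terms})+(\text{curvature of }\chi\text{ contracted with }F),
\]
where the concavity terms come from the convexity of $A\mapsto\tr_A\chi$ and of $A\mapsto\lambda_j e^{-\log\det_\chi A}$. Both terms are convex in $A$, so the third-order terms produced by $-D^2G_j$ have a favourable sign. Explicitly, they are $\ge 2\sum g^{p\bar p}g^{q\bar q}g^{r\bar r}|\nabla_k g_{p\bar r}|^2$-type quantities plus nonnegative determinant contributions, including a term $\frac{\lambda_j}{\det}\bigl(|\partial\log\det|^2+g^{p\bar p}g^{q\bar q}|\nabla g_{p\bar q}|^2\bigr)$.

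Second, commute covariant derivatives using $\chi$ as background, so that $\nabla_p\nabla_{\bar q}g_{k\bar k}=\nabla_k\nabla_{\bar k}g_{p\bar q}+\mathrm{Rm}(\chi)*g$. Write $\mathcal L_j\log w=\frac{\mathcal L_j w}{w}-\frac{F^{p\bar q}w_pw_{\bar q}}{w^2}$.

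The curvature contribution is bounded by $C\,\mathrm{Rm}\cdot\sum_{p}F^{p\bar p}\lambda_k/w$ and $C\,\mathrm{Rm}\sum F^{p\bar p}$. The second one is uncontrolled in general. Here I would use the relation $\sum_p \lambda_p^{-2}\le \tr_{\omega_j}\chi\cdot\max\lambda_p^{-1}$ together with $\tr_{\omega_j}\chi\le c$ (from the equation). Since $\lambda_p\ge c^{-1}$ (each $\lambda_p^{-1}\le\tr_{\omega_j}\chi<c$), this gives $\sum_p F^{p\bar p}\le c^2+\lambda_j c^n\cdot c$. The $\lambda_j$ part is harmless after discarding finitely many $j$, or can be bounded uniformly since $\lambda_j\to0$ and $\det_\chi\omega_j\ge c^{-n}$. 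This is the point where independence from $s_j,\lambda_j$ is achieved.

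Third, the main obstacle is absorbing the bad gradient term $F^{p\bar q}w_pw_{\bar q}/w^2$ by the good third-order terms. I would follow the standard Cauchy--Schwarz argument of Song--Weinkove/Weinkove for the $J$-flow:
\[
\frac{|\sum_k\nabla_p g_{k\bar k}|^2}{w}\le\sum_{k}\frac{|\nabla_pg_{k\bar k}|^2}{\lambda_k}.
\]
Summed with weights $g^{p\bar p}g^{p\bar p}$, this is dominated by the $\tr_A\chi$-concavity term $\sum g^{p\bar p}g^{q\bar q}g^{r\bar r}\cdot$(but with the index $k$ exchanged via $\nabla_pg_{k\bar k}=\nabla_kg_{p\bar k}$ by closedness of $\omega_j$ and $\chi$). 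The same argument handles the $\log\det$ part separately, since $\log\det$ is concave and the analogous inequality is the classical Yau/Aubin computation.

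Collecting these, $\mathcal L_j\log w_j\ge -C(n,c)\|\mathrm{Rm}(\chi)\|$, with constants depending only on $n$, $c$, and the curvature bound. This is the estimate \eqref{eq:log-trace}, with $C_0=C_0(n,c,\|\mathrm{Rm}(\chi)\|_{C^0(\chi)})$ as claimed.
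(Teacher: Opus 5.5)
Your argument is correct and follows essentially the paper's route: the paper simply cites Chen's Proposition~2.1, which is this Song--Weinkove computation for $\log\tr_\chi\omega_j$ with the extra term $\lambda_j/\det_\chi\omega_j$, where convexity of $G_j$ supplies the third-order terms that absorb $F^{p\bar q}\partial_p w_j\,\partial_{\bar q}w_j/w_j^2$ through exactly your Cauchy--Schwarz step. Two small corrections: the inequality displayed in your first step should be an identity, $F^{p\bar q}\partial_k\partial_{\bar k}g_{p\bar q}=(\text{nonnegative third-order terms})+O(c\,\|\mathrm{Rm}(\chi)\|)$, so it gives a lower bound and not an upper bound; and to make $C_0$ independent of $\lambda_j$ for every $j$, use $\lambda_j/\det_\chi\omega_j=c-\tr_{\omega_j}\chi\le c$ instead of $\lambda_j c^n$, which together with $w_j\ge n/c$ bounds all curvature contributions, including the one from $\partial\bar\partial\log\det\chi$, by $C(n)c^2\|\mathrm{Rm}(\chi)\|$.
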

\begin{proof}
It follows from \cite[Proposition~2.1]{Chen21}. 
\end{proof}

The following lemma is essentially established in \cite{SongWeinkove,Chen21}. The formulation is inspired by \cite[Lemma 4.1]{Guan14} and \cite[Proposition 6]{Szekelyhidi18}. 
\begin{lemma}\label{lem:coercivity}
Fix $0<\eta<c$ and $\Lambda>0$.  There are constants $N,\delta>0$,
depending only on $n,c,\eta,\Lambda$, with the following property.  For
every $j$ such that $0\le\lambda_j\le\Lambda$, if $A,B>0$ satisfy
\[
 G_j(A)=c,\qquad P_\chi(B)\le c-\eta,\qquad \tr_\chi A\geq N,
\]
then 
\begin{equation}\label{eq:coercivity}
 -D G_j|_A(B-A)
 \ge\delta\bigl(1-D G_j|_A(\chi)\bigr).
\end{equation}
\end{lemma}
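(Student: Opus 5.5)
We will prove the inequality by diagonalizing and computing the linearized operator explicitly. Fix a point and choose coordinates in which $\chi=\id$ and $A=\operatorname{diag}(\lambda_1,\dots,\lambda_n)$ with $\lambda_1\ge\cdots\ge\lambda_n>0$. Put $\mu:=\lambda_j/\prod_i\lambda_i$. Then
\[
 -DG_j|_A(H)=\sum_i F^{i\bar i}H_{i\bar i},\qquad F^{i\bar i}:=\frac1{\lambda_i^2}+\frac{\mu}{\lambda_i}.
\]
Since $\sum_iF^{i\bar i}\lambda_i=\sum_i\lambda_i^{-1}+n\mu=c+(n-1)\mu$, the claim \eqref{eq:coercivity} becomes
\[
 \sum_iF^{i\bar i}B_{i\bar i}-c-(n-1)\mu\ \ge\ \delta\Bigl(1+\sum_iF^{i\bar i}\Bigr).
\]

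\emph{A priori bounds from $G_j(A)=c$ and $\tr_\chi A\ge N$.}
\begin{itemize}
\item Each $\lambda_i^{-1}\le c$, so $\lambda_i\ge 1/c$.
\item $\lambda_1\ge N/n$.
\item $\mu\le\Lambda c^{\,n-1}/\lambda_1\le n\Lambda c^{\,n-1}/N$, so $\mu$ is small for large $N$.
\item $F^{1\bar1}\le C/N$.
\end{itemize}
Hence the $\mu$-terms and the $i=1$ terms are all $O(1/N)$, up to factors controlled by $\sum_iF^{i\bar i}$. The term $\mu\sum_i B_{i\bar i}/\lambda_i\ge0$ is discarded, or kept to absorb $\delta\mu\sum_i\lambda_i^{-1}$, using $B\ge c^{-1}\chi$ from the remark after \cref{lem:subsolution-equivalence}.

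\emph{Main step: the diagonal entries $B_{i\bar i}$ for $i\ge2$.} Let $H=\operatorname{span}(e_2,\dots,e_n)$. For a positive matrix we have $(M^{-1})_{i\bar i}\ge 1/M_{i\bar i}$, so
\[
 \sum_{i\ge2}\frac{1}{B_{i\bar i}}\le\tr\bigl((B|_H)^{-1}\bigr)\le P_\chi(B)\le c-\eta.
\]
Split $B_{i\bar i}/\lambda_i^2=(1-\theta)B_{i\bar i}/\lambda_i^2+\theta B_{i\bar i}/\lambda_i^2$ with a small $\theta=\theta(c,\eta)$. Apply AM--GM to the first part, $B/\lambda^2\ge 2/\lambda-1/B$, after rescaling by $(1-\theta)$. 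Bound the second part by $\theta c^{-1}\lambda_i^{-2}$. This gives
\[
 \sum_{i\ge2}\frac{B_{i\bar i}}{\lambda_i^2}\ \ge\ (1-\theta)\Bigl(2\sum_{i\ge2}\lambda_i^{-1}-\tfrac{c-\eta}{1-\theta}\Bigr)+\frac{\theta}{c}\sum_{i\ge2}\lambda_i^{-2}.
\]
Now substitute $\sum_{i\ge2}\lambda_i^{-1}=c-\lambda_1^{-1}-\mu$. The right-hand side becomes $c+\eta-O(\theta)-O(1/N)$ plus $\tfrac{\theta}{c}\sum_{i\ge2}\lambda_i^{-2}$. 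The term $O(\theta)$ is at most $2\theta c$, and we choose $\theta$ so that it is at most $\eta/4$.

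\emph{Absorbing the remaining terms.} Subtract $c+(n-1)\mu$. Choose $N$ large so that all the $O(1/N)$ terms are at most $\eta/4$. The remaining margin $\eta/2$ dominates $\delta\cdot1$. The term $\tfrac\theta c\sum_{i\ge2}\lambda_i^{-2}$ dominates $\delta\sum_{i\ge2}F^{i\bar i}$, since $\mu/\lambda_i\le\mu c\le C/N$. The small term $\delta F^{1\bar1}$ goes into the margin as well. So $\delta=\min(\eta/4,\theta/(2c))$ works, up to constants.

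The main obstacle is this middle step: extracting a strict gain $\eta$ from $P_\chi(B)\le c-\eta$ via the diagonal-inverse inequality on the hyperplane orthogonal to the largest eigendirection, while retaining a positive multiple of $\sum_i\lambda_i^{-2}$. If the AM--GM bookkeeping fails, the fallback is Székelyhidi's argument: compare against $B-\delta'\chi$, which still lies strictly in the cone. The $\lambda_j$-contributions are harmless because $\mu\to0$ as $\tr_\chi A\to\infty$, uniformly for $\lambda_j\le\Lambda$.
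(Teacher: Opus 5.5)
Your argument is correct, and it takes a genuinely different route from the paper.

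\textbf{The paper's route.} The paper never computes the linearization. It truncates $B$ to $B_M$, capping the $\chi$-eigenvalues at $M=2(n-1)/\eta$, so that $P_\chi(B_M)\le c-\eta/2$. The pairs $(B_M,\lambda_j)$ with $\lambda_j\in[0,\Lambda]$ then lie in a compact family. It applies the compactness-based $\mathcal C$-subsolution argument of \cite[Proposition~7.10]{FangMa24}, in the Guan--Sz\'ekelyhidi style, to this family. Finally it returns to $B\ge B_M$ using positivity of $-DG_j|_A$.

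\textbf{Your route.} You diagonalize $A$ against $\chi$ and write $-DG_j|_A$ explicitly. You then use the Song--Weinkove device of restricting to the hyperplane $e_1^\perp$ orthogonal to the largest eigendirection. There the diagonal-inverse inequality gives $\sum_{i\ge2}B_{i\bar i}^{-1}\le P_\chi(B)\le c-\eta$, and AM--GM converts this into the gain $\eta$. The $\theta$-portion, together with $B\ge(c-\eta)^{-1}\chi$, retains $\frac{\theta}{c}\sum_{i\ge2}\lambda_i^{-2}$, which dominates $\delta\sum_iF^{i\bar i}$. The remaining $\lambda_1^{-1}$, $\mu$ and $F^{1\bar1}$ terms are $O(1/N)$ uniformly for $\lambda_j\le\Lambda$.

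\textbf{Bookkeeping check.} Take $\theta=\eta/(8c)$ and $N$ large depending on $n,c,\Lambda,\eta$. Then the left side is at least $\eta/2+\frac{\eta}{8c^2}\sum_{i\ge2}\lambda_i^{-2}$, and the right side is at most $2\delta+\delta\sum_{i\ge2}\lambda_i^{-2}$. So $\delta=\min\{\eta/4,\eta/(8c^2)\}$ works. Your displayed AM--GM inequality is slightly weaker than $(1-\theta)\bigl(2S-(c-\eta)\bigr)$, but still valid and sufficient. The fallback you mention is unnecessary.

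\textbf{Comparison.} Your approach gives explicit constants and needs no truncation, since an upper bound on $B$ is never used and large $B_{i\bar i}$ only help. It also avoids any appeal to an external compactness lemma. The paper's approach is structural: it works for any concave operator fitting the $\mathcal C$-subsolution framework, at the cost of non-explicit constants.

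One cosmetic point: you use $\lambda_i$ for the eigenvalues while $\lambda_j$ is the equation's constant, so rename one of them.
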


\begin{proof}
Let $B_M\le B$ be obtained by replacing each $\chi$-eigenvalue $b_i$ of
$B$ by $\min\{b_i,M\}$.  For $M=2(n-1)/\eta$, the pairs
$(B_M,\lambda_j)$, $0\le\lambda_j\le\Lambda$, form a compact family and
satisfy $$P_\chi(B_M)\le P_\chi(B)+(n-1)/M\le c-\eta/2.$$
Apply the proof of \cite[Proposition~7.10]{FangMa24} uniformly to this
family; positivity of $-D G_j|_A$ gives
\eqref{eq:coercivity} for $B$.
\end{proof}

\begin{proposition}\label{prop:metric}
Let $C_0$ be as in \cref{lem:log-trace}, and let $N,\delta$ be as in
\cref{lem:coercivity} with $\Lambda=1$.  For any $K>C_0/\delta$ and all
sufficiently large $j$,
\[
 \tr_\chi\omega_j
 \le N e^{KC_{\mathbf b}}
       \exp\bigl(K(u_j-\underline v)\bigr).
\]
\end{proposition}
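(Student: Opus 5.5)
We argue by a maximum principle for the test function
\[
 Q_j:=\log w_j-K(u_j-\underline v)=\log w_j-KW_j
 \quad\text{on }X\setminus Z_{\mathbf b},
\]
with $w_j=\tr_\chi\omega_j$. Since $W_j\to+\infty$ at $Z_{\mathbf b}$ and $w_j$ is smooth and bounded on the compact $X$, we have $Q_j\to-\infty$ near $Z_{\mathbf b}$. Hence $Q_j$ attains its maximum at an interior point $x_0\in X\setminus Z_{\mathbf b}$, where $\underline v$ is smooth. There are two cases. If $w_j(x_0)<N$, then $Q_j\le\log N-KW_j(x_0)\le\log N+KC_{\mathbf b}$. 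Here we use $W_j\ge-C_{\mathbf b}$, which is \eqref{eq:relative-lower} from \cref{thm:abstract-barrier-criterion} for $j\ge j_{\mathbf b}$. Exponentiating at an arbitrary point gives exactly $w_j\le Ne^{KC_{\mathbf b}}e^{K(u_j-\underline v)}$. So it suffices to rule out $w_j(x_0)\ge N$.

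Suppose $w_j(x_0)\ge N$. At $x_0$ the operator $\mathcal L_j$ is elliptic and $Q_j$ has a maximum, so $\mathcal L_jQ_j(x_0)\le0$. By \cref{lem:log-trace},
\[
 0\ge\mathcal L_jQ_j\ge-C_0-K\mathcal L_j(u_j-\underline v).
\]
Next we rewrite the linearized term using the background identity. Write $B:=(1-\tau)\omega+\ddc\underline v$, which at $x_0$ is a smooth positive form with $P_\chi(B)\le c-\eta$. Then
\[
 \ddc(u_j-\underline v)=\omega_j-(1+s_j)\omega-\ddc\underline v=\omega_j-B-(\tau+s_j)\omega.
\]
Since $\mathcal L_j h=-DG_j|_{\omega_j}(\ddc h)$ is linear, we get
\[
 -\mathcal L_j(u_j-\underline v)=-DG_j|_{\omega_j}(B-\omega_j)-DG_j|_{\omega_j}\bigl((\tau+s_j)\omega\bigr).
\]
The last term is nonnegative because $G_j$ is decreasing, so $-DG_j$ is positive on positive forms; we simply discard it. For $j$ large we have $\lambda_j\le1=\Lambda$ and $G_j(\omega_j)=c$. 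Applying \cref{lem:coercivity} with $A=\omega_j$ and this $B$ gives
\[
 -DG_j|_{\omega_j}(B-\omega_j)\ge\delta\bigl(1-DG_j|_{\omega_j}(\chi)\bigr)\ge\delta.
\]
Therefore $0\ge-C_0+K\delta$, which contradicts $K>C_0/\delta$. Hence this case cannot occur, and the first case yields the estimate.

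The main point to verify is the sign bookkeeping in the last step. First, $\mathcal L_j(-\underline v)$ must be computed with $\underline v$ smooth near $x_0$; this holds because $x_0\notin Z_{\mathbf b}$, so no viscosity argument is needed. Second, the form $B$ fed into \cref{lem:coercivity} must be pointwise positive definite with $P_\chi(B)\le c-\eta$. This follows from \eqref{eq:barrier-P} and the remark after \cref{lem:subsolution-equivalence}, since a smooth form satisfying the cone bound is positive. A further subtlety is that \cref{lem:coercivity} requires $\tr_\chi A\ge N$, which is exactly the case hypothesis. The constants $N$ and $\delta$ depend only on $n,c,\eta$, and $C_0$ is independent of $j$, so $K$ is uniform in $j$, as claimed.
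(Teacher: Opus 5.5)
Your proposal is correct and follows the paper's proof essentially step for step. You use the same test function $\log w_j-K(u_j-\underline v)$, maximized off $Z_{\mathbf b}$, and apply \cref{lem:log-trace} at the maximum point. You then use the identity $B-\omega_j=-(\tau+s_j)\omega-\ddc(u_j-\underline v)$, discarding the $(\tau+s_j)\omega$ term by monotonicity of $G_j$, invoke \cref{lem:coercivity} to force $w_j<N$ at the maximum, and conclude with the relative bound $u_j\ge\underline v-C_{\mathbf b}$. Your case split is only a contrapositive repackaging of the paper's direct inequality $-DG_j|_{\omega_j}(B-\omega_j)\le C_0/K<\delta$.
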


\begin{proof}
We may assume $\lambda_j\le1$ and the estimate in
\cref{thm:abstract-barrier-criterion} holds.  On
$X\setminus Z_{\mathbf b}$, set
\[
 \Phi_j:=\log w_j-K(u_j-\underline v).
\]
Since $\underline v$ tends to $-\infty$ along $Z_{\mathbf b}$,
$\Phi_j$ attains its maximum at a point
$x_j\in X\setminus Z_{\mathbf b}$.  If $Z_{\mathbf b}=\varnothing$, this
follows directly from compactness.

At $x_j$, ellipticity and \cref{lem:log-trace} give
\[
 \mathcal L_j(u_j-\underline v)
 \ge K^{-1}\mathcal L_j\log w_j
 \ge-\frac{C_0}{K}.
\]
Put
\(
 B:=(1-\tau)\omega+\ddc\underline v.
\)
The barrier condition gives $P_\chi(B)\le c-\eta$.  Since
\[
 B-\omega_j=-(\tau+s_j)\omega-\ddc(u_j-\underline v),
\]
we obtain at $x_j$
\[
 -D G_j|_{\omega_j}(B-\omega_j)
 \le \frac{C_0}{K}<\delta.
\]
As $-D G_j|_{\omega_j}(\chi)\ge0$,
\cref{lem:coercivity} implies $w_j(x_j)<N$.  The relative lower bound
from \cref{thm:abstract-barrier-criterion} gives
\[
 \Phi_j(x_j)\le\log N+KC_{\mathbf b}.
\]
The maximality of $\Phi_j$ proves the proposition.
\end{proof}

\begin{proof}[Proof of \cref{thm:relative}]
The lower metric bound was proved above.
\Cref{prop:metric} gives the first inequality in
\eqref{eq:weighted-bound}; the second follows from $u_j\le0$.
\end{proof}

\subsection{Smooth convergence and proof of the main theorem}
\label{sec:regularity}

Fix relatively compact open sets
$L\cpt U'\cpt U\cpt X\setminus Z_{\mathbf b}$.  The relative bound of
\cref{thm:abstract-barrier-criterion}, the normalization $u_j\le0$, and
$Q_\chi(\omega_j)<c$ give
\begin{equation}\label{eq:local-range}
 \|u_j\|_{L^\infty(U)}\le C_U,
 \qquad
 c^{-1}\chi\le\omega_j\le C_U\chi
 \quad\text{on }U.
\end{equation}
$C_U$ can be chosen independent of $j$. On the compact matrix range in \eqref{eq:local-range}, the operators
$-G_j$ are uniformly elliptic and concave.  Their dependence on
$x$ is uniformly smooth because $\omega$ and $\chi$ are fixed and
$s_j,\lambda_j$ are bounded.

\begin{proposition}\label{prop:local-smoothness}
For every compact $L\cpt X\setminus Z_{\mathbf b}$ and every integer
$m\ge0$,
\begin{equation}\label{eq:all-local-bounds}
 \sup_j\|u_j\|_{C^m(L)}<\infty.
\end{equation}
Moreover,
\begin{equation}\label{eq:smooth-limit}
 u_j\longrightarrow u
 \quad\text{in }C^\infty_{\mathrm{loc}}(X\setminus Z_{\mathbf b}),
\end{equation}
and on this complement
\[
 \qquad Q_\chi(\omega_u)=c.
\]
\end{proposition}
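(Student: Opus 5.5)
The proof is local. It combines the uniform $C^0$ and $C^{1,1}$ control \eqref{eq:local-range} with Evans--Krylov theory and Schauder bootstrapping, and then identifies the limit using the $L^1$ convergence from \cref{thm:smooth-approximants}.

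\textbf{Step 1: $C^{2,\alpha}$ bounds.} Fix $L\cpt U'\cpt U\cpt X\setminus Z_{\mathbf b}$. On $U$ we have $\|u_j\|_{L^\infty}\le C_U$ and $c^{-1}\chi\le\omega_j\le C_U\chi$, so $\|\Delta_\chi u_j\|_{L^\infty(U)}\le C$ uniformly in $j$. The equation $G_j(\omega_j)=c$ can be rewritten as
\[
F_j(x,\ddc u_j):=-\tr_{A}\chi-\lambda_j\,\frac{\chi^n}{A^n}=-c,\qquad A=(1+s_j)\omega+\ddc u_j .
\]
On the compact matrix range $\{c^{-1}\chi\le A\le C_U\chi\}$, $F_j$ is uniformly elliptic. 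It is also concave in $A$: $A\mapsto-\tr_A\chi$ is concave, and $-\det(A)^{-1}=-e^{-\log\det A}$ is concave because $\log\det$ is concave and $t\mapsto -e^{-t}$ is concave and increasing. Its $x$-dependence is smooth with bounds uniform in $j$, since $s_j,\lambda_j\le1$ for large $j$ and $\omega,\chi$ are fixed. The complex Evans--Krylov theorem, in the interior version for concave uniformly elliptic equations with bounded complex Hessian (e.g.\ as in Tosatti--Wang--Weinkove--Yang, or via Safonov/Caffarelli for real Hessian bounds), then gives $\|u_j\|_{C^{2,\alpha}(U')}\le C$.

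The main obstacle is that we only control the complex Hessian, not the full real Hessian. I would use an Evans--Krylov statement that needs only $\ddc u$ bounded, namely the complex version. Alternatively, $L^p$ theory for the linearized equation gives $W^{2,p}$ bounds, after which the real version applies.

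\textbf{Step 2: higher derivatives.} Differentiate the equation in a coordinate direction. The derivative $\partial_k u_j$ satisfies a linear uniformly elliptic equation whose coefficients $-DG_j|_{\omega_j}$ are $C^\alpha$-bounded, with $C^\alpha$ right-hand side. Schauder estimates give $C^{3,\alpha}$ bounds, and iterating on a nested shrinking sequence of domains yields \eqref{eq:all-local-bounds} for every $m$.

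\textbf{Step 3: convergence and the limit equation.} By Arzel\`a--Ascoli and a diagonal argument over an exhaustion of $X\setminus Z_{\mathbf b}$ by compacts, every subsequence has a further subsequence converging in $C^\infty_{\mathrm{loc}}$. Since $u_j\to u$ in $L^1(X)$, the limit equals $u$ almost everywhere. Both the limit and $u$ are $\omega$-psh and thus determined by their values a.e., so they agree everywhere on the complement. Hence the full sequence converges, which is \eqref{eq:smooth-limit}.

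Passing to the limit pointwise, $s_j\to0$ and $\lambda_j\to0$ in $c\,\omega_j^n=n\chi\wedge\omega_j^{n-1}+\lambda_j\chi^n$. This gives $c\,\omega_u^n=n\chi\wedge\omega_u^{n-1}$ with $\omega_u>0$, i.e.\ $Q_\chi(\omega_u)=c$ on $X\setminus Z_{\mathbf b}$. Positivity of $\omega_u$ follows from $\omega_j\ge c^{-1}\chi$.
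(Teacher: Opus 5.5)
Your proposal is correct and follows the paper's proof. The uniform matrix range \eqref{eq:local-range} plus the complex Evans--Krylov estimate of Tosatti--Wang--Weinkove--Yang gives uniform $C^{2,\alpha}$ bounds, Schauder bootstrapping gives all $C^m$ bounds, the $L^1$ convergence identifies every subsequential $C^\infty_{\mathrm{loc}}$ limit with $u$, and letting $s_j,\lambda_j\to0$ in the equation gives $Q_\chi(\omega_u)=c$. Your explicit verification that $-G_j$ is concave, and your argument that two quasi-psh functions agreeing almost everywhere agree everywhere, supply details the paper leaves implicit.
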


\begin{proof}
The complex Evans--Krylov estimate
\cite[Theorem~1.2]{TWWY}, applied to $-G_j$ on $U$, gives an
$\alpha\in(0,1)$ such that
\begin{equation}\label{eq:evans-krylov}
 \|u_j\|_{C^{2,\alpha}(U')}\le C_{U'}.
\end{equation}
Since $G_j$ is now uniformly elliptic in $U$, the  standard interior Schauder estimates gives
\eqref{eq:all-local-bounds}.

By \cref{thm:smooth-approximants}, $u_j\to u$ in $L^1(X)$.  Every
subsequence therefore has a further subsequence converging smoothly on
compact subsets of $X\setminus Z_{\mathbf b}$, and its limit agrees with $u$.  Thus the full sequence
converges as in \eqref{eq:smooth-limit}.

Finally, 
\[
0\le\lambda_j\det_{\omega_j}\chi\le\lambda_jc^n\longrightarrow0.
\]
Since $s_j\to0$, smooth local convergence in
\eqref{eq:smooth-approximation} yields $Q_\chi(\omega_u)=c$.
\end{proof}

\begin{proof}[Proof of \cref{thm:main}]
By \cref{thm:locus-identification},
$Z_J=\NullJ(\alpha,\beta)$, and this set is a proper analytic subset with
finitely many irreducible components.  Choose the realizing barrier of
\cref{thm:controlled-realizing-barrier}; its pole set is $Z_J$ and its
potential is smooth on $X\setminus Z_J$.  Then
\cref{thm:abstract-barrier-criterion,thm:relative,prop:local-smoothness}
shows that $u$ is smooth on
$X\setminus Z_J=X\setminus\NullJ(\alpha,\beta)$.
\end{proof}

\section{Comparison and examples}\label{sec:comparison-examples}
\subsection{The shifted non-K\"ahler locus}
\label{sec:shifted-class}

Having proved \cref{thm:main}, we compare $Z_J$ with the non-K\"ahler locus of
the shifted class naturally associated with the pointwise $J$-cone:
$\Theta_J:=c\alpha-(n-1)\beta\in H^{1,1}(X,\R)$.
Recall that the non-K\"ahler locus of a big class $\Theta$ is
$E_{\mathrm{nK}}(\Theta) :=\bigcap_R E_+(R)$,
where $R$ ranges over the K\"ahler currents in $\Theta$.  Boucksom proves
that this intersection is the positive-Lelong locus of one K\"ahler current
with analytic singularities
\cite[Definition~3.16 and Theorem~3.17(ii)]{Boucksom}.

\begin{proposition}\label{prop:shifted-upper}
If $\Theta_J$ is big, then
\begin{equation}\label{eq:shifted-upper}
 \NullJ(\alpha,\beta)\subset E_{\mathrm{nK}}(\Theta_J).
\end{equation}
\end{proposition}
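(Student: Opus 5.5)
Since $Z_J=\NullJ(\alpha,\beta)$ by \cref{thm:locus-identification}, it suffices to show $Z_J\subset E_{\mathrm{nK}}(\Theta_J)$. By the definition \eqref{eq:minimal-barrier-locus-definition}, this in turn follows once we produce, from a K\"ahler current in $\Theta_J$ with analytic singularities, a $J$-barrier whose pole set is contained in $E_{\mathrm{nK}}(\Theta_J)$. By Boucksom's theorem \cite[Theorem~3.17(ii)]{Boucksom}, choose a K\"ahler current $R=\theta_J+\ddc\psi\in\Theta_J$ with analytic singularities such that $E_+(R)=E_{\mathrm{nK}}(\Theta_J)$ and $R\ge\varepsilon\chi$. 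Here $\theta_J:=c\omega-(n-1)\chi$ is a smooth representative, $\psi$ is smooth off $E_+(R)$, and $\psi\to-\infty$ exactly there.

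Next we convert $R$ into a pointwise cone condition. Set $T:=\omega+\ddc(\psi/c)\in\alpha$, so that
\[
cT=R+(n-1)\chi\ge(n-1+\varepsilon)\chi .
\]
Hence every $\chi$-eigenvalue of $T$ is at least $(n-1+\varepsilon)/c$. Then $P_\chi(T)$, which is at most the sum of the $n-1$ largest inverse eigenvalues, satisfies
\[
P_\chi(T)\le\frac{(n-1)c}{n-1+\varepsilon}<c
\]
on $X\setminus E_+(R)$, where $T$ is smooth. \Cref{lem:cone-extension}(iii) extends this bound across the analytic set. We therefore get $P_\chi(T)\le c-2\eta$ for some $\eta>0$.

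To fit \cref{def:J-barrier}, the background must be $(1-\tau)\omega$. Take $\underline v:=(1-\tau)\psi/c-\sup$, so that $(1-\tau)\omega+\ddc\underline v=(1-\tau)T$. By scaling, $P_\chi((1-\tau)T)=P_\chi(T)/(1-\tau)\le c-\eta$ for $\tau$ small. The function $\underline v$ is continuous off $E_+(R)$, and its pole set is $\{\psi=-\infty\}=E_+(R)$ because the singularities are analytic. Thus $(\underline v,\tau,\eta,E_+(R))\in\BarJ$, which gives $Z_J\subset E_+(R)=E_{\mathrm{nK}}(\Theta_J)$.

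The main point to check is that Boucksom's current has pole set exactly equal to the positive-Lelong locus, so that $\{\underline v=-\infty\}=E_+(R)$. For analytic singularities $\psi=\lambda\log\sum|f_k|^2+O(1)$ this holds, since both sets equal the common zero locus of the $f_k$. Otherwise the argument is elementary linear algebra. If needed, one can add $\delta\psi_{Z}$ as in \cref{thm:strict-barrier} to enforce exact poles.
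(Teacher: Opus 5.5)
Your proposal is correct and follows essentially the same route as the paper. You take Boucksom's K\"ahler current $R\ge\varepsilon\chi$ with analytic singularities realizing $E_{\mathrm{nK}}(\Theta_J)$ and set $\underline v=(1-\tau)\psi/c$, so that $(1-\tau)\omega+\ddc\underline v=\frac{1-\tau}{c}\bigl(R+(n-1)\chi\bigr)$; the eigenvalue bound then gives $P_\chi\le c-\eta$ once $(1-\tau)(n-1+\varepsilon)>n-1$, and you conclude via $Z_J=\NullJ(\alpha,\beta)$, exactly as the paper does. Your extra steps (normalizing by the supremum, and invoking \cref{lem:cone-extension}(iii) to carry the cone bound across the pole set) only make explicit what the paper leaves implicit.
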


\begin{proof}
Choose, by Boucksom's theorem, a K\"ahler current $R\in\Theta_J$ with
analytic singularities such that
$E_+(R)=E_{\mathrm{nK}}(\Theta_J)$.
Write
$R=c\omega-(n-1)\chi+\ddc\psi$,
where $\psi\le0$ is smooth off $E_+(R)$ and has that set as its exact polar
set.  Since $R$ is a K\"ahler current, choose $\delta>0$ such that
$R\ge\delta\chi$.  Choose $\tau>0$ so small that
\begin{equation}\label{eq:shifted-tau-choice}
 (1-\tau)(n-1+\delta)>n-1,
\end{equation}
and set $\underline v=(1-\tau)\psi/c$.  Then
\begin{align*}
 T_{\mathbf b}
 &:=(1-\tau)\omega+\ddc\underline v
 =\frac{1-\tau}{c}\bigl((n-1)\chi+R\bigr),
 \\
 T_{\mathbf b}
 &\ge a\chi,
 \qquad
 a:=\frac{(1-\tau)(n-1+\delta)}{c}.
\end{align*}
On the complement of $E_+(R)$ this current is smooth, and the elementary
eigenvalue bound $T_{\mathbf b}\ge a\chi$ gives
$P_\chi(T_{\mathbf b}) \le\frac{n-1}{a} =c-\eta$, $\eta:=c-\frac{n-1}{a}>0$
by \eqref{eq:shifted-tau-choice}. The potential is already smooth
off its exact analytic pole set, so \cref{def:J-barrier} applies. Thus
$\bigl(\underline v,\tau,\eta,E_{\mathrm{nK}}(\Theta_J)\bigr)\in\BarJ$.
The defining intersection $Z_J$ of all barrier polar sets is contained in
this one. Now \cref{thm:locus-identification} gives \eqref{eq:shifted-upper}.
\end{proof}

\subsection{An example of mixed-dimensional null locus}
\begin{example}[Mixed-dimensional null locus and bounded potentials]
\label{ex:J-mixed-null-components}
Let $1\le m\le n-2$, let $Y$ be a smooth connected projective $m$-fold,
and choose a very ample line bundle $L$ on $Y$. Put $r=n-m$ and, in the
lines convention, set
\[
 X_0=\mathbb P_Y\bigl(\mathcal O_Y\oplus(L^{-1})^{\oplus r}\bigr),
 \qquad Y_0=\mathbb P_Y(\mathcal O_Y).
\]
Write $h=\pi^*c_1(L)$ and $\xi=c_1(\mathcal O_{X_0}(1))$.
The semiample class $\xi$ contracts exactly $Y_0$ and embeds its
complement, while $|\xi-h|$ has base locus $Y_0$.
For sufficiently small $a>0$, define
\begin{equation}\label{eq:J-mixed-base-classes}
 A=\xi+ah,\qquad
 b=\frac mn-\frac{ra\,h\cdot A^{n-1}}{n\,\xi\cdot A^{n-1}},
 \qquad B=b\xi+ah.
\end{equation}
Then $A,B$ are K\"ahler, $b\to m/n$ as $a\downarrow0$, and
$nBA^{n-1}/A^n=m$. Their numerical defects are
\[
 \mathcal J_p^0(V)
 =\bigl((m-pb)\xi+(m-p)ah\bigr)A^{p-1}\cdot V.
\]
For $V\subset Y_0$, this equals $(m-p)a^p\int_Vc_1(L)^p$ and vanishes
only for $V=Y_0$. For $V\not\subset Y_0$, a divisor in $|\xi-h|$
not containing $V$ gives $hA^{p-1}\cdot V\le\xi A^{p-1}\cdot V$.
Hence, after choosing $a$ small enough,
\begin{equation}\label{eq:J-mixed-positive-margin}
 \mathcal J_p^0(V)\ge\kappa_p\,\xi A^{p-1}\cdot V>0,
 \qquad
 \kappa_p=m-pb-\max\{p-m,0\}a>0
 \quad(1\le p<n).
\end{equation}

Blow up $q\in X_0\setminus Y_0$, writing
$f:X\to X_0$, $E\cong\mathbb P^{n-1}$, and $Y'$ for the strict
transform of $Y_0$. For small $\varepsilon>0$, set
\begin{equation}\label{eq:J-mixed-blowup-classes}
 \begin{gathered}
 t_\varepsilon=\frac{m\varepsilon^n}
 {n(n-1)\xi\cdot A^{n-1}},\qquad
 \alpha_\varepsilon=f^*A-\varepsilon[E],\\
 \beta_\varepsilon=f^*(B+t_\varepsilon\xi)
                 -\frac{m\varepsilon}{n-1}[E].
 \end{gathered}
\end{equation}
These classes are K\"ahler, and the blow-up intersection formulas give
$c_\varepsilon=n\beta_\varepsilon\alpha_\varepsilon^{n-1}/
\alpha_\varepsilon^n=m$. The defects on $Y'$ are unchanged, whereas,
for $W\subset E$ of dimension $p$,
\[
 \mathcal J_{p,\varepsilon}(W)
 =m\left(1-\frac{p}{n-1}\right)\varepsilon^p
   \deg_{\mathcal O_E(1)}W.
\]
For every other strict transform $\widetilde V$, put
$\mu=\operatorname{mult}_qV$. Since $\xi$ embeds a neighborhood of $q$,
$\mu\le\xi^p\cdot V\le\xi A^{p-1}\cdot V$, and therefore
\[
 \mathcal J_{p,\varepsilon}(\widetilde V)
 \ge\left[\kappa_p-pt_\varepsilon
 -m\left(1-\frac{p}{n-1}\right)\varepsilon^p\right]
 \xi A^{p-1}\cdot V>0
\]
for sufficiently small $\varepsilon$, uniformly in $V$.
Thus
\[
 \operatorname{Null}_J(\alpha_\varepsilon,\beta_\varepsilon)
 =Y'\sqcup E\cong Y\sqcup\mathbb P^{n-1},
\]
and these are the only positive-dimensional irreducible null
subvarieties. No smooth boundary-cone representative exists: its
hyperplane-trace inequality would be strict on every $m$-plane,
since $m<n-1$, contradicting $\mathcal J_{m,\varepsilon}(Y')=0$.

We next prove global boundedness for fixed $\varepsilon$. Choose
arbitrary K\"ahler forms $\omega\in\alpha_\varepsilon$ and
$\chi\in\beta_\varepsilon$. The normalized smooth approximants
$u_j\to u$ in $L^1(X)$ satisfy
\[
 \omega_j=(1+s_j)\omega+dd^cu_j>0,\qquad
 G_j(\omega_j)=m,\qquad
 G_j(T):=\operatorname{tr}_T\chi+\lambda_j\det_T\chi,
\]
where $s_j,\lambda_j\ge0$ tend to zero. They are uniformly locally
bounded off $Y'\cup E$ by the preceding estimates.
For $D=Y'$ or $E$, put $p_D=\dim D$, $k_D=n-p_D$. The bundle chart
and the local model of the blow-up identify a neighborhood of $D$
with a neighborhood of the zero section in
\[
 N_{D/X}=(M_D^{-1})^{\oplus k_D},\qquad
 M_{Y'}=L,\qquad M_E=\mathcal O_E(1).
\]
Let $\pi_D$ be the projection. Since
$\eta_D:=(p_D/m)\chi|_D\in[\omega|_D]$, choose smooth $g_D$ with
$\omega|_D+dd^cg_D=\eta_D$.
Equip $M_D$ with positive curvature $\kappa_D$, let $t$ be the squared
norm for the dual direct-sum metric, and set
\[
 w_D=g_D\circ\pi_D+t^{1/4},\qquad T_D=\omega+dd^cw_D.
\]
The cusp is bounded and continuous. In the Chern splitting,
$dd^ct^{1/4}$ has horizontal block $\frac14t^{1/4}\kappa_D$,
zero mixed block, and positive vertical block comparable to
$t^{-3/4}I_{k_D}$. Thus $T_D>0$ on a punctured tube, and block inversion gives
\begin{align}
 \operatorname{tr}_{T_D}\chi
 &=m-\frac{m}{4p_D}t^{1/4}
       \operatorname{tr}_{\eta_D}\kappa_D+O(t^{1/2}),
 \label{eq:J-mixed-cusp-trace}\\
 \det_{T_D}\chi&=O(t^{3k_D/4}).
 \label{eq:J-mixed-cusp-determinant}
\end{align}
Since $\sup_j\lambda_j<\infty$, choose disjoint fixed tubes $U_D$
such that $G_j(T_D)<m$ on $U_D\setminus D$ for all $j$.
The local estimate on $\partial U_D$ gives $u_j\ge w_D-C_D$ there.
An interior minimum of $u_j-w_D$ off $D$ would give
$\omega_j\ge T_D$, contradicting $G_j(\omega_j)=m>G_j(T_D)$.
A minimum on $D$ is also impossible, since the normal growth
$t^{1/4}\asymp|z|^{1/2}$ precludes a smooth upper test for $w_D$.
Hence $u_j\ge w_D-C_D$ throughout $U_D$. Applying this to both
components and using the local estimate on the complement yields
\begin{equation}\label{eq:J-mixed-global-bound}
 \sup_j\|u_j\|_{L^\infty(X)}<\infty,
 \qquad u\in L^\infty(X).
\end{equation}
The bound may depend on the fixed $\varepsilon$ and background forms.
\end{example}

It remains an interesting problem to determine when $u$ is globally bounded; Fu--Zhang~\cite{FuZhang} establish such boundedness under the existence of a smooth boundary-cone subsolution.

\appendix
\section{Mass concentration}
\label{app:full-trace-concentration}
\input{AppendixMassConcentration}
\raggedbottom
\printbibliography




\end{document}

%% file: j-intro-0911.tex

\section{Introduction}\label{sec:introduction}

In this paper, we study the semistable $J$-equation on compact K\"ahler
manifolds, with particular emphasis on the geometry of its numerical null
locus and the regularity of Murakami's weak solution.

\subsection{Smooth solvability and the numerical criterion}

The $J$-equation is a fully nonlinear equation for K\"ahler metrics in a
fixed cohomology class. It arose from Donaldson's moment-map picture
\cite{Donaldson99} and X.~X.~Chen's study of the
$J$-functional and its relation to the Mabuchi energy
\cite{Chen00,Chen04}. Let $X$ be a compact connected K\"ahler manifold of
complex dimension $n\ge2$, and let $\omega,\chi$ be K\"ahler forms. Write
$$
 \alpha=[\omega],\qquad \beta=[\chi],\qquad
 c=n\frac{\beta\cdot\alpha^{n-1}}{\alpha^n}.
$$
We use $d^c=\frac{\sqrt{-1}}2(\bar\partial-\partial)$. For a K\"ahler potential $\phi$ with
$\omega_\phi:=\omega+\ddc\phi>0$,
the $J$-equation is
\begin{equation}\label{eq:J-equation}
 \tr_{\omega_\phi}\chi=c,
 \qquad\text{equivalently}\qquad
 c\,\omega_\phi^n
 = n\,\chi\wedge\omega_\phi^{n-1}.
\end{equation}

The associated $J$-flow is its gradient-flow formulation \cite{Chen04,SongWeinkove}.
The smooth solvability problem admits both an analytic and a numerical
characterization.

Following Weinkove's convergence results \cite{Weinkove04,Weinkove06},
Song--Weinkove established the sharp analytic criterion
\cite{SongWeinkove}: smooth solvability, smooth convergence of
the $J$-flow, and the existence of a K\"ahler representative
$\widehat\omega\in\alpha$ satisfying
\begin{equation}\label{eq:SW-cone-intro}
 c\widehat\omega^{n-1}
 -(n-1)\chi\wedge\widehat\omega^{n-2}>0
\end{equation}
are equivalent. Thus the analytic condition is the existence of a strict
subsolution, expressed by positivity of an $(n-1,n-1)$-form.
Fang--Lai--Ma extended this cone-condition approach to a family of inverse
$\sigma_k$ flows \cite{FangLaiMa}.

Lejmi--Sz\'ekelyhidi conjectured an intersection-theoretic characterization
of the same condition \cite{LejmiSzekelyhidi}. For each irreducible
subvariety $V\subsetneq X$ with $1\le p=\dim V<n$, set
$$
 \mathcal J_p(V)=c\,\alpha^p\cdot[V]
       -p\,\beta\cdot\alpha^{p-1}\cdot[V].
$$
Their conjecture asserts that the strict  condition
\eqref{eq:SW-cone-intro} is equivalent to $\mathcal J_p(V)>0$ for every
such $V$. It thereby relates the existence of a solution to positivity
on subvarieties, in the spirit of a Nakai--Moishezon criterion.
Collins--Sz\'ekelyhidi proved this characterization in the toric setting
\cite{CollinsSzekelyhidi}.

Gao Chen proved the equivalence with uniform $J$-stability and a uniform
strengthening of the intersection inequalities
\cite{Chen21}. Datar--Pingali removed the uniform margin in
the projective setting and treated generalized Monge--Amp\`ere equations
\cite{DatarPingali}. Song established the sharp strict
numerical criterion on arbitrary compact K\"ahler manifolds
\cite{Song20}, settling the conjecture of  Lejmi--Sz\'ekelyhidi.
Fang--Ma subsequently obtained
analytic and numerical solvability criteria in a broader differential-form
framework \cite{FangMa24}.
The strict theory thus identifies smooth solvability with positivity of
all numerical defects. The natural boundary problem is to allow
some of these defects to vanish.

\subsection{The semistable boundary and the null locus}

We assume throughout that $(\alpha,\beta)$ is numerically $J$-semistable:
\begin{equation}\label{eq:J-semistability}
 \mathcal J_p(V)\ge0\quad
 (V\subsetneq X\text{ irreducible},\ 1\le p=\dim V<n).
\end{equation}
Unlike the strict criterion, \eqref{eq:J-semistability} allows equality on proper
subvarieties. The analytic problem is to obtain a
weak solution, and understand  its
geometric degeneration.

An earlier model is the surface boundary result of
Fang--Lai--Song--Weinkove \cite{FangLaiSongWeinkove}.
Assuming that $c\alpha-\beta$ has a smooth semipositive representative,
they obtained a uniform $C^0$ estimate for the $J$-flow and smooth
convergence to a singular K\"ahler metric away from finitely many curves
of negative self-intersection. Related boundary results under divisor-controlled
or smooth boundary-cone hypotheses were obtained by  Sun
\cite[Theorem~1.1]{Sun24} and T\^o  \cite[Theorem~1.1]{To23}.

Murakami's weak-solution theory supplies the analytic starting point of
the present paper. On K\"ahler surfaces he proved current convergence
under numerical $J$-nefness, without assuming a smooth semipositive
representative \cite[Theorem~1.2]{MurakamiSurface}. In arbitrary dimension,
he established boundary existence, admissible uniqueness, and current
convergence for generalized Monge--Amp\`ere equations
\cite[Theorems~1.7 and~1.10]{Murakami}. Applied to the $J$-equation, his
boundary argument identifies limits of the compatible strict twisted
problems, and uniqueness gives a canonical normalized weak solution.
We denote this solution by $u$, with $\sup_Xu=0$, and its normalized
smooth elliptic approximants by $u_j$; see
\cite[proof of Theorem~2.5]{Murakami} and
\cref{def:weak-J,thm:smooth-approximants}. The admissible branch includes
the lower-degree current inequalities as well as the top-degree equation.

From the intersection-theoretic point of view, the numerical equality set is the $J$-null locus
\cite{Fu26,LiuThreefold}:
\begin{equation}\label{nullj}
 \NullJ(\alpha,\beta)=
 \bigcup_{\substack{V\subsetneq X\ \mathrm{irreducible}\\
                 1\le\dim V<n,\ \mathcal J_{\dim V}(V)=0}}V.\end{equation}
The central question we are trying to address  in this paper is whether it describes a single
intrinsic degeneration locus from three perspectives: does \eqref{nullj}
form a proper analytic subset, can it be realized as the unavoidable
singular set of suitable strict currents, and does it detect exactly
where the canonical weak solution loses regularity?

For ordinary positivity, the model is the theorem of Collins--Tosatti
\cite{CollinsTosatti}: the numerical null locus of a nef and big
$(1,1)$-class equals its non-K\"ahler locus, defined through the singularities
of K\"ahler currents. The $J$-equation asks for a nonlinear counterpart of
this picture. In addition to the positivity of a current, the analytic
object must also retain the strict cone condition underlying
\eqref{eq:SW-cone-intro}. We construct such an object and relate it to the
regularity of Murakami's solution.




\begin{theorem}\label{thm:main}
Let $u$ be the unique normalized weak solution for semistable $J$-equation. Then  $$u\in C^{\infty}(X\setminus \NullJ(\alpha,\beta)).$$
Moreover, $\NullJ(\alpha,\beta)$ is a proper analytic subset with finitely many irreducible components. 
\end{theorem}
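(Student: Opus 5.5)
The proof splits into an analytic half and a geometric half. The analytic half is already in place once we work relative to a barrier. The geometric half is the identification $Z_J=\NullJ(\alpha,\beta)$, where $Z_J$ is the minimal $J$-barrier locus of \eqref{eq:minimal-barrier-locus-definition}.

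\textbf{Analytic half.} Take the realizing barrier $\underline v_*$ of \cref{thm:controlled-realizing-barrier}. It has exact pole set $Z_J$, is smooth on $X\setminus Z_J$, and belongs to $\BarJ$.
\begin{enumerate}
\item \Cref{thm:abstract-barrier-criterion} gives $u_j\ge\underline v_*-C$ uniformly in $j$ on $X\setminus Z_J$.
\item \Cref{thm:relative} then gives $\tr_\chi\omega_j\le Ce^{-K\underline v_*}$. Together with $Q_\chi(\omega_j)<c$, this produces the uniform two-sided metric bounds \eqref{eq:local-range} on every $U\Subset X\setminus Z_J$.
\item \Cref{prop:local-smoothness} upgrades these bounds, via Evans--Krylov and Schauder, to $C^\infty_{\mathrm{loc}}$ convergence $u_j\to u$. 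Hence $u\in C^\infty(X\setminus Z_J)$.
\end{enumerate}
\Cref{thm:controlled-realizing-barrier} also shows that $Z_J$ is a proper analytic subset, so it has finitely many irreducible components. Everything therefore reduces to proving $Z_J=\NullJ(\alpha,\beta)$.

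\textbf{Inclusion $\NullJ\subset Z_J$.} Let $V$ be irreducible of dimension $p$ with $\mathcal J_p(V)=0$, and suppose $V\not\subset Z_J$. Then some barrier $(\underline v,\tau,\eta,Z)$ has $V\not\subset Z$. Restrict $T=(1-\tau)\omega+\ddc\underline v$ to $V$. This is legitimate because $\underline v$ is continuous off $Z$, so it is not identically $-\infty$ on $V$. By the pluripotential formulation of \cref{lem:subsolution-equivalence}, we have
\[
(c-\eta)\langle T^p\rangle|_V-p\chi\wedge\langle T^{p-1}\rangle|_V\ge0
\]
on the regular part of $V\setminus Z$. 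This is a pointwise restriction of the $P$-cone condition to $p$-planes, with $p\le n-1$.

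Integrate this inequality, comparing the non-pluripolar masses with the cohomological intersection numbers. Since $T\in(1-\tau)\alpha$, monotonicity of non-pluripolar masses gives $\int_V\langle T^p\rangle\le(1-\tau)^p\alpha^p\cdot V$. The lower-degree masses need more care. I would avoid that issue by working with the smooth approximants $\omega_j$ and the barrier in a maximum-principle or degree argument, or by first replacing $\underline v$ with $\max(\underline v,\,\cdot\,)$ against a bounded potential so that all masses are full. This should yield $c\,\alpha^p\cdot V-p\beta\alpha^{p-1}\cdot V>0$, using $\eta>0$ and $\tau>0$. That contradicts $\mathcal J_p(V)=0$.

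\textbf{Inclusion $Z_J\subset\NullJ$ (main obstacle).} For each irreducible component $W$ of $Z_J$ that is not contained in $\NullJ$, I would build a new barrier whose pole set misses a generic point of $W$. This contradicts the minimality of $Z_J$.

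The plan is to imitate Collins--Tosatti and Chen's mass concentration (appendix) inductively on dimension. On $W$, or on a resolution $\widetilde W$, the restricted pair satisfies strict positivity $\mathcal J_q(V')>0$ for all $V'\subset W$ not in $\NullJ$. Chen's concentration argument then gives a strict subsolution on $\widetilde W$ that is smooth near a generic point.

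Next, extend it to a neighbourhood of $W$ in $X$ with controlled loss. Use $\underline v_*$ plus $A\log$ of the distance to $W$, glued via the regularized maximum (\cref{lem:cone-maxima}). The growth bounds \eqref{eq:two-sided-log-barrier}--\eqref{eq:polynomial-Lipschitz-barrier} make this gluing possible: near $W$ the extension dominates $\underline v_*$ only in a tube, and elsewhere $\underline v_*$ wins. Finally, regularize with \cref{thm:global-cone-regularization} so the result stays in $\BarJ$ while its pole set avoids a generic point of $W$.

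Making the extension preserve the strict $P$-cone in the normal directions is the delicate step. I would first try adding a large multiple of a local $\log|s|^2$ of defining functions for $W$, which boosts the normal eigenvalues, and absorbing the tangential error using the margin $\eta$.
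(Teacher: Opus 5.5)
Your analytic half, and the reduction of \cref{thm:main} to $Z_J=\NullJ(\alpha,\beta)$, are exactly the paper's.

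For $\NullJ\subset Z_J$ you correctly see that integrating a singular barrier over $V$ gives no cohomological inequality, but your fix is left unexecuted. In general there is no bounded subsolution in $(1-\tau)\alpha$ to take a maximum with, and $\tau>0$ plays no role. The paper takes a regularized maximum of $u_j$ and $\underline v_*+C_j$: $u_j$ wins near $Z_J$, and the barrier wins on a fixed $U\Subset X\setminus Z_J$ meeting $V$. This gives a smooth $\Omega_j\in(1+s_j)\alpha$ with $P_\chi(\Omega_j)\le c$ everywhere and $P_\chi(\Omega_j)\le c-\eta_*$ on $U$. Integrating over $V$ gives $(1+s_j)^{p-1}\bigl(\mathcal J_p(V)+cs_j\int_V\omega^p\bigr)\ge\eta_*c^{-p}\int_{V\cap U}\chi^p>0$, and the contradiction comes from letting $j\to\infty$. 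You must also exclude isolated points of $Z_J$ separately, since $\mathcal J$ says nothing about them; a local $\max\{\underline v_*,K|z|^2-h-C\}$ does this.

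The genuine gap is in $Z_J\subset\NullJ$, at three points.

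\emph{Input to concentration.} A component $W$ may contain null subvarieties, so your premise that the restricted pair is strictly positive fails, and no induction on dimension is needed. The paper uses the restricted approximants $f^*\omega_j+\varepsilon_j\varpi$, which satisfy $Q_{f^*\chi}\le P_\chi(\omega_j)<c$ on $\widetilde W$ (this is where $\dim W\le n-1$ enters). Their top-degree defect tends to $\mathcal J_m(W)>0$, so \cref{prop:full-trace-concentration}\,(i) applies. Its output is a \emph{singular} K\"ahler current. It must be regularized on $\widetilde W$ via \cref{thm:global-cone-regularization,prop:global-growth} with $F=Q$, giving $\varphi$ smooth off $\widetilde Z$ with a logarithmic upper bound and a polynomial gradient bound, before anything is extended.

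\emph{Extension.} Adding $A\log|s|^2$ puts a pole on $W$, which is exactly what the new barrier must avoid. Moreover $\ddc\log|s|^2$ vanishes in the radial normal direction, so it does not boost all normal eigenvalues. The paper instead uses $\varphi(z)+K|w|^2$, plus potential corrections, in adapted coordinates on $\widetilde X$. It patches these by a regularized maximum in a tube of width $\varepsilon_0d_{\widetilde Z}^{k_0}$, where the gradient bound controls the overlaps, and pushes down by \cref{lem:resolved-tube-comparison}.

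\emph{Gluing, the main missing idea.} The final gluing is a competition of logarithmic rates on the collar $\{d_Y\approx\varepsilon d_Z^k\}$ as one approaches $Z$. The patch satisfies only $q\le\ell\log d_Z+C$. By contrast, $\underline v_*\ge L\log\dist(\cdot,Z_J)-C$ gives merely $\underline v_*\ge Lk\log d_Z-C$ there, and $Lk>\ell$ in general because $k$ is forced to be large. So nothing guarantees that $\underline v_*$ dominates $q-C_*$ on the collar for any constant $C_*$, and adding $A\log\dist(\cdot,W)$ only makes this worse.

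The fix is to replace $\underline v_*$ by $v_j=\lambda_j\underline v_*+(1-\lambda_j)u_j$ with $\lambda_j=2s_j/(\tau_*+2s_j)\to0$. By convexity of $P_\chi$, this is still a barrier, with $\tau_j=\lambda_j\tau_*/2$ and margin $\lambda_j\eta_*$. Its logarithmic coefficient $\lambda_jL_*$ satisfies $\lambda_jL_*k<\ell$ for large $j$. Then $q-C_*$ loses on the collar and wins near $W\setminus Z$. The glued function is already a barrier with pole set $Z\cup Y'\subsetneq Z_J$, so no final global regularization is needed.
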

Theorem \ref{thm:main} gives the regularity of the solution outside $\NullJ(\alpha,\beta)$. In particular, it implies that $u$ is bounded away from $\NullJ(\alpha,\beta)$. \cref{rem:nonReg2} shows that $\NullJ$ is the set where $u$ is not locally $C^2$ and hence \cref{thm:main} characterizes $X\setminus \NullJ$ as the largest locus of the $C^2$-regularity of $u$. However, we do not know the unbounded locus of $u$. 
The surface boundary results in
\cite{FangLaiSongWeinkove} and \cite{Fang-Lai} already illustrate why these notions must be
separated. 

\subsection{Relation to recent work}
The semistable $J$-equation and its degeneration loci have recently been 
studied in several settings.
X. Fu proved analyticity of the $J$-null locus on surfaces and toric
manifolds, and smoothness of the torus-invariant weak solution on the
dense complex torus \cite{Fu26}.
Liu proved an analytic characterization of the null locus and smooth
$J$-flow convergence on its complement for K\"ahler threefolds
\cite{LiuThreefold}; a second paper treats higher dimensions under
$J$-bigness and a smooth boundary-cone condition \cite{LiuBoundary}.
Fu--Zhang obtained divisorial rigidity under semistability and global
boundedness under a smooth boundary-cone condition \cite{FuZhang}.
The geometry of obstructing subvarieties is also studied in the
minimal-slope program of Datar--Mete--Song \cite{DatarMeteSong}, the
wall--chamber theory of Khalid--Sj\"ostr\"om Dyrefelt
\cite{KhalidSjoestroemDyrefelt}, and the finiteness and rigidity results
of Sivaram--Sj\"ostr\"om Dyrefelt \cite{SSD}.

During the final preparation of this manuscript, a third paper of Liu
appeared on arXiv \cite{LiuFiniteness}.  Under numerical
$J$-semistability in arbitrary dimension, Liu proves that there are only
finitely many positive-dimensional irreducible $J$-null subvarieties,
together with a uniform positive gap for non-null subvarieties.  In
particular, this is a stronger numerical finiteness statement than the
finiteness of the irreducible components of their union obtained here.
His argument and ours both make essential use of Chen's
mass-concentration construction, but in complementary directions.  Liu
applies concentration to null subvarieties and uses the resulting
exceptional sets in a dimension-descending finiteness argument, whereas
our   argument starts from positive numerical defect, combines it with
Demailly--Kiselman attenuation to construct barriers with singularities in a minimal analytic sets, and excises the corresponding non-null components.

Our theorem treats the full null locus in arbitrary dimension under
numerical $J$-semistability alone, without additional assumptions.
Its main distinction from the results above is a three-way interpretation
of this locus: numerically, it is the union of the $J$-null subvarieties;
analytically, it is the intrinsic minimal strict-barrier locus; and from
the PDE viewpoint, it is exactly the ambient $C^2$-singular locus of
Murakami's canonical weak solution.  In particular, the weak solution is
smooth and the elliptic approximants converge in
$C^\infty_{\mathrm{loc}}$ on its entire complement, while no point of the
null locus admits an ambient $C^2$ neighborhood.  The existence and
uniqueness of the weak solution itself are supplied by Murakami's theory
\cite{Murakami}.

\subsection{Ideas of the proof}
The idea is straightforward: we search for a strict subsolution $v$ that has an analytic pole set and then use it as a barrier to obtain \emph{a prior} estimates. In practice, it consists of 3 steps that have different geometric and analytic flavors. 

The first step combines Chen's mass concentration with Demailly's global
regularization and Kiselman's attenuation of singularities. Chen supplies
a positive current with a strict cone condition
\cite[Theorem~1.18]{Chen21}, used in the local convolution form. We apply Demailly's exponential regularization and
attenuation \cite{DemaillyFlow94} to produce a positive current whose potential has an analytic pole set which still satisfies the strict cone condition. Using finite max operation, we obtain a current satisfies the strict cone condition whose potential has the minimal analytic pole set $Z_J$.  

The second step removes unwanted components of the analytic locus $Z_J$. A component $V$ with positive $\mathcal J_p(V)$
must admit a local potential which satisfies the strict cone condition. With careful estimates, we can glue the potential back to resolve the singularity which contradicts the minimality. 

The third step applies the singular relative $L^\infty$ estimate of
Fang--Ma--Wu \cite{FMWRelative}. A $J$-specific determinant bound gives 
$u_j\ge v-C$ uniformly for each fixed barrier $v$ satisfies the strict cone condition.
A weighted logarithmic-trace estimate, adapted from
\cite{Weinkove06,SongWeinkove}, then yields local metric bounds; standard
elliptic regularity gives smooth convergence off $Z_J$. This proves the
regularity statement in \cref{thm:main}. 

The Chen–Demailly–Kiselman barrier and attenuation framework suggests analogous questions for supercritical deformed Hermitian–Yang–Mills and more general fully nonlinear Kähler equations, but these extensions require additional equation-specific arguments and are left to subsequent work.

The constructions and ideas developed here may also be applied to
deformed Hermitian--Yang--Mills (Leung--Yau--Zaslow) equations and
more general fully nonlinear geometric PDEs. We leave these
directions for future work.

\subsection{Organization of the paper}

\Cref{sec:setup} fixes the conventions and records the weak-solution and
subsolution formulations.  \Cref{sec:global-inputs} constructs strict
subsolutions, regularizes them, and establishes the quantitative growth
estimates used later.  \Cref{sec:minimal-barrier} defines the minimal
$J$-barrier with pole set $J_Z$ and controlled logrithmetic growth. \Cref{sec:component-removal} identifies this
locus with the numerical null locus.  \Cref{sec:determinantal-transfer}
proves the relative $L^\infty$ and weighted metric estimates, obtains
smooth convergence away from the null locus, and completes the proof of
\cref{thm:main}.  \Cref{sec:comparison-examples} compares the resulting
locus with the shifted non-K\"ahler locus and treats an example with null components of different dimensions.
The mass-concentration statement used in the null-locus identification is
recorded in Appendix~\ref{app:full-trace-concentration}.

\medskip

\paragraph{{\textbf{Acknowledgments.}}}
The authors thank Jinyang Wu and Xin Fu for helpful discussions
on related topics.

\medskip

\paragraph{\textbf{Declaration of AI assistance.}}
The main ideas and mathematical arguments are due to the authors.
AI tools were used to assist with the audit of the arguments and
the editing of the manuscript. The authors assume full responsibility
for the mathematical content and the final text.

%% file: IdentificationofNulllocus.tex
It is straightforward to show that the minimal barrier locus  $Z_J$ contains the numerical null locus $\NullJ$. In this section, we will prove that these 2 sets are equal.     

\begin{theorem}
\label{thm:locus-identification}
For $J$-semistable pair $(\alpha,\beta)$, 
$Z_J=\NullJ(\alpha,\beta)$. In particular, $\NullJ$ is a
proper analytic subset of $X$ with finitely many irreducible components.
\end{theorem}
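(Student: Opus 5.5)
The proof has two inclusions. The inclusion $\NullJ\subset Z_J$ is the easy one, via a pole-avoidance argument. The inclusion $Z_J\subset\NullJ$ is proved by contradiction: we remove a non-null component of $Z_J$ by local gluing, which contradicts the minimality from \cref{thm:minimal-barrier}. Once equality holds, \cref{thm:minimal-barrier} says $Z_J$ is a proper analytic subset. A proper analytic subset of a compact manifold has finitely many irreducible components, and this gives the final assertion.

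\emph{Step 1: $\NullJ\subset Z_J$.} Let $V$ be irreducible of dimension $p$ with $\mathcal J_p(V)=0$, and let $\mathbf b=(\underline v,\tau,\eta,Z_{\mathbf b})$ be any barrier. Suppose $V\not\subset Z_{\mathbf b}$. Then $\underline v|_V$ is not identically $-\infty$, and $T=(1-\tau)\omega+\ddc\underline v$ restricts to a positive current on $V$ in the class $(1-\tau)\alpha|_V$, with local potentials bounded off $V\cap Z_{\mathbf b}$. On the regular part of $V$ away from $Z_{\mathbf b}$, the bound $P_\chi(T)\le c-\eta$ implies a pointwise inequality for $T|_V$ restricted to $p$-planes. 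Indeed, for $p\le n-1$, the trace of $\chi$ on any $p$-dimensional subspace is bounded by the trace on an $(n-1)$-plane containing it. This gives
\[
(c-\eta)\langle T|_V^p\rangle-p\chi\wedge\langle T|_V^{p-1}\rangle\ge0 .
\]
I will integrate this non-pluripolar inequality over $V$. Monotonicity of non-pluripolar masses, applied on a resolution of $V$ with analytic-type singularities, bounds $\int\langle T|_V^p\rangle$ and $\int\chi\wedge\langle T|_V^{p-1}\rangle$ by the cohomological pairings. The two terms enter with opposite signs, so this is delicate. I would therefore first use \cref{thm:global-cone-regularization} and \cref{prop:global-growth} to replace $\underline v$ by a barrier that is smooth off $Z_{\mathbf b}$ and has logarithmic poles, and then pass to a log resolution of $V\cap Z_{\mathbf b}$. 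Comparing the pullback with $(1-\tau)\alpha$ minus effective divisors then shows $(c-\eta)(1-\tau)^p\alpha^p\cdot V-p(1-\tau)^{p-1}\beta\cdot\alpha^{p-1}\cdot V$ is bounded from above suitably. Since $\mathcal J_p(V)=0$ and $\alpha^p\cdot V>0$, this gives a contradiction with $\tau,\eta>0$. Hence $V\subset Z_{\mathbf b}$ for every barrier, and so $V\subset Z_J$.

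\emph{Step 2: $Z_J\subset\NullJ$.} Suppose some irreducible component $V$ of $Z_J$, of dimension $p$, is not contained in $\NullJ$. Then either $p=0$, or $\mathcal J_p(V)>0$; moreover every $J$-null subvariety meeting $V$ properly lies in the other components. I would take $\underline v_*$ from \cref{thm:controlled-realizing-barrier}. Applying mass concentration (\cref{prop:full-trace-concentration}) on $V$, and inductively on its subvarieties, produces a current on a neighbourhood of generic points of $V$ in the strict cone whose potential is bounded near $V\setminus(\text{other components}\cup\NullJ)$. Concretely, I would build a local strict subsolution $w$, extending a strict subsolution on $V$ (obtained from $\mathcal J_p(V)>0$ via Chen/Song applied to $(V,\alpha|_V,\beta|_V)$, through a resolution) off $V$ by adding $A\,\dist(\cdot,V)^2$. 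Then I would glue $\max_\delta(\underline v_*,\, w-C)$ by \cref{lem:cone-maxima}. The logarithmic lower bound \eqref{eq:two-sided-log-barrier} ensures $\underline v_*>w-C$ outside a tube around $V$, so the glued function agrees with $\underline v_*$ there. Inside the tube it is bounded near generic points of $V$. After renormalizing, this gives a barrier whose pole set misses a nonempty open part of $V$, contradicting minimality of $Z_J$.

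The main obstacle is Step 2: producing the local strict subsolution near $V$ and matching the constants $\tau,\eta$ across the gluing. Near $V\cap(\text{other components})$, where $w$ may itself be singular, I would first try to glue only on a neighbourhood of a generic point of $V$, using the polynomial Lipschitz bound \eqref{eq:polynomial-Lipschitz-barrier} to control the transition region.
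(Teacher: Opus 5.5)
Your two-inclusion structure (show $\NullJ\subset Z_J$, then excise a non-null component of $Z_J$ by max-gluing a local patch and contradict minimality) is the paper's. However, each step is missing the ingredient that makes it work. In both cases that ingredient is the family of smooth approximants $u_j$ of \cref{thm:smooth-approximants}, i.e.\ smooth forms $\omega_j\in(1+s_j)\alpha$ with $P_\chi(\omega_j)<c$.

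\textbf{Step 1 does not close.} Integrating the singular restriction $T|_V$ only gives \emph{upper} bounds on non-pluripolar masses.
\begin{itemize}
\item The term $p\int_V\chi\wedge\langle T|_V^{p-1}\rangle$ is the one you must bound from \emph{below}, and it can lose mass along $V\cap Z_{\mathbf b}$. This works for $p=1$, where the $\chi$-term has no mass to lose, but not for $p\ge2$.
\item On a log resolution you only get $(c-\eta)P^p\ge p\,\beta\cdot P^{p-1}$ for $P=(1-\tau)\alpha-D$. Returning to $(1-\tau)\alpha$ produces correction terms in $D$ (e.g.\ $D^2$, $\beta\cdot D$) with no definite sign.
\item With $\mathcal J_p(V)=0$, the quantity $(c-\eta)(1-\tau)^p\alpha^p\cdot V-p(1-\tau)^{p-1}\beta\cdot\alpha^{p-1}\cdot V$ is automatically negative. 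So bounding it from above proves nothing; you would need it to be $\ge0$.
\end{itemize}
The paper avoids singular integration entirely. It suffices to prove $V\subset Z_J$, so use the realizing barrier $\underline v_*$, which is smooth off $Z_J$. Form $\Omega_j=(1+s_j)\omega+\ddc\widetilde{\max}_\delta(u_j,\underline v_*+C_j)$, with $u_j$ active near $Z_J$ and $\underline v_*+C_j$ active on a fixed $U\Subset X\setminus Z_J$ meeting $V_{\mathrm{reg}}$. This is a \emph{smooth} form in $(1+s_j)\alpha$ with $P_\chi(\Omega_j)<c$ everywhere and $P_\chi(\Omega_j)\le c-\eta_*$ on $U$. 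Integration over $V$ is then purely cohomological and gives $(1+s_j)^{p-1}\bigl(\mathcal J_p(V)+cs_j\,\alpha^p\cdot V\bigr)\ge\eta_*c^{-p}\int_{V\cap U}\chi^p>0$. This contradicts $\mathcal J_p(V)=0$ as $j\to\infty$.

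\textbf{Step 2 has two gaps.}

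First, the patch cannot live near a generic point of $V$. Since $\underline v_*=-\infty$ on $V$, gluing forces the patch to tend to $-\infty$ wherever the boundary of its domain meets $V$. The maximum principle for its quasi-psh restriction to $V$ forbids this unless the domain contains a tube around all of $V$ off a closed pluripolar (in practice proper analytic) set $Z$ where the patch has poles. The paper builds such a global patch as follows:
\begin{itemize}
\item Apply \cref{prop:full-trace-concentration}(i) with $F=Q$ to the pulled-back approximants $f^*\omega_j+\varepsilon_j\varpi$ on a resolution $\widetilde V$. Only the top defect $\mathcal J_p(V)>0$ is needed.
\item Regularize by \cref{thm:global-cone-regularization}, then extend to a pinched tube $\{d_V<\varepsilon d_Z^k\}$.
\end{itemize}
Chen--Song on $V$ itself is not available: $f^*\alpha$ is not K\"ahler on the resolution, and strict positivity on subvarieties of $V$ is not known.

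Second, even with such a patch $q\le\ell\log d_Z+C$, the collar $d_V\approx\varepsilon d_Z^k$ only gives $\underline v_*\ge Lk\log d_Z-C$. Domination at the ends of the tube therefore requires $Lk<\ell$. This fails in general: $\ell$ is small (it comes from a small multiple of a log-pole function), $k$ must be large, and $L$ is fixed by the barrier. Neither \eqref{eq:two-sided-log-barrier} nor the Lipschitz bound helps. The paper's device is to replace $\underline v_*$ by $v_j=\lambda_j\underline v_*+(1-\lambda_j)u_j$ with $\lambda_j=2s_j/(\tau_*+2s_j)\to0$. This is still a strict barrier, with $\tau_j=\lambda_j\tau_*/2$ and margin $\lambda_j\eta_*$, and it has pole set $Z_J$. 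But its logarithmic slope is $\lambda_jL_*\to0$, so $\lambda_jL_*k<\ell$ for large $j$.

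Finally, isolated points of $Z_J$ ($p=0$) cannot be treated by mass concentration. They are handled separately by the elementary gluing $\max\{\underline v_*,K|z|^2-h-C\}$.
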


To prove the above theorem, we use \cref{thm:minimal-barrier} to pick a particular barrier $$(\underline v_*,\tau_*,\eta_*,Z_J)\in \mathfrak B_J$$ that realizes the minimal barrier locus $Z_J$. 
\begin{proposition}
\label{prop:uniform-positive-patch}\label{cor:null-in-every-barrier}
Every null subvariety is contained in $Z_J$, and $Z_J$ has no
zero-dimensional irreducible component.
\end{proposition}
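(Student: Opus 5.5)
Both parts of \cref{prop:uniform-positive-patch} should follow from a single principle. Near a point where a barrier is locally bounded, the barrier current is a genuine strict subsolution, and by \cref{lem:ac-cone} it gives pointwise bounds on its absolutely continuous part. These bounds are incompatible with vanishing numerical defect on any subvariety meeting that region. I will prove the statement for an arbitrary barrier $\mathbf b=(\underline v,\tau,\eta,Z_{\mathbf b})\in\BarJ$. This yields both containment in $Z_J$ and the dimension statement.

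\emph{Null subvarieties lie in every pole set.} Let $V$ be irreducible of dimension $p$, $1\le p<n$, with $\mathcal J_p(V)=0$. Suppose, for contradiction, that $V\not\subset Z_{\mathbf b}$. Put $T=(1-\tau)\omega+\ddc\underline v$. Since $P_\chi(T)\le c-\eta$, the current $T$ satisfies the pluripotential cone condition in every degree, with $c-\eta$ in place of $c$. In particular, $\Xi:=(c-\eta)\langle T^p\rangle-p\chi\wedge\langle T^{p-1}\rangle\ge0$. I want to restrict this to $V$. Because $\underline v$ is continuous off $Z_{\mathbf b}$ and $V\not\subset Z_{\mathbf b}$, the restriction $\underline v|_V$ is a quasi-psh function on (a resolution of) $V$ that is locally bounded on the dense open set $V\setminus Z_{\mathbf b}$. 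The cone condition passes to $V$: the $P$-bound controls traces on every hyperplane, so the $\chi$-trace inverse bound holds on every $p$-dimensional subspace. I will check this via the viscosity/convolution formulation on $V_{\rm reg}\setminus Z_{\mathbf b}$, extend it to all of $V$ using \cref{lem:ac-cone}(iii), and use the restriction identity on a desingularization.

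Integrating the non-pluripolar products over $V$ and using the monotonicity of non-pluripolar masses (they are bounded above by cohomological intersection numbers of the class $(1-\tau)\alpha|_V$, since $\underline v|_V$ has small unbounded locus), I aim for
\[
0\le \int_V\Xi\le (c-\eta)(1-\tau)^p\alpha^p\cdot V-p(1-\tau)^{p-1}\beta\cdot\alpha^{p-1}\cdot V + (\text{defect}).
\]
The defect term is the real difficulty. Monotonicity gives an upper bound for the positive term, but only a lower bound for $\chi\wedge\langle T^{p-1}\rangle$ would be needed, and that has the wrong sign. I will first try to avoid the issue by the simpler pointwise route. Since $P_\chi(T)\le c-\eta$ gives $T\ge (c-\eta)^{-1}\chi$, I work at the level of $(1-\tau)\omega$. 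The right-hand side, computed in cohomology, equals $(1-\tau)^{p-1}\bigl[\mathcal J_p(V)-(\eta+c\tau)\alpha^p\cdot V+p\tau\beta\alpha^{p-1}V/(1-\tau)\bigr]$ up to rearrangement. To handle the sign of the $\tau$ terms cleanly, I will instead combine $T$ with $\tau\omega$: the current $T+\tau\omega=\omega+\ddc\underline v\in\alpha$ still satisfies $P_\chi\le c-\eta$ by monotonicity. Applying the restricted inequality to it, and assuming the masses are cohomological (full mass on $V$; if needed, replace $\underline v$ by the regularization $\max(\underline v,-k)$, which preserves the cone condition by \cref{lem:cone-maxima} and has bounded potential, so that the Bedford--Taylor products are cohomological), gives $\mathcal J_p(V)\ge \eta\,\alpha^p\cdot V>0$. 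This contradicts nullity. Hence $V\subset Z_{\mathbf b}$ for every barrier, so $\NullJ\subset Z_J$. The main obstacle is justifying the truncation step. The cone condition for $\max(\underline v,-k)$ requires the constant $-k$ to be a subsolution, that is, $P_\chi(\omega)\le c-\eta$, which fails in general. So I would rather use a Kiselman/Demailly regularization from \cref{thm:global-cone-regularization} applied to $T$ restricted near $V$, or a direct capacity argument showing that mass loss on $V$ only increases the left side. I expect this step to absorb most of the effort.

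\emph{No isolated points.} Suppose $x$ is a zero-dimensional component of $Z_J$, and let $\underline v_*$ be the realizing barrier. On a small ball $B\ni x$ with $Z_J\cap \overline B=\{x\}$, I replace $\underline v_*$ by $\widetilde{\max}_\delta(\underline v_*,\,g+A\log(\text{dist bump})\ldots)$. Concretely, I glue with a local smooth strict subsolution $q$ on $B$, namely $q=\epsilon_1|z-x|^2$ plus a constant, which satisfies $P_\chi\le c-\eta'$ near $x$ as long as the background $(1-\tau)\omega+\ddc q$ is strict. By shrinking $\tau$ I can get $P_\chi((1-\tau')\omega+\ddc q)$ small locally only if $\omega$ itself is strict at $x$. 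Otherwise I use $q=K|z-x|^2$ with $K$ large, which makes the trace small. On $\partial B$, the function $\underline v_*$ is bounded, so subtracting a constant from $q$ makes $\underline v_*>q$ near $\partial B$, while $q>\underline v_*=-\infty$ near $x$. The regularized maximum, via \cref{lem:cone-maxima}, gives a barrier with pole set $Z_J\setminus\{x\}$, contradicting minimality.
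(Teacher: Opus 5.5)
\textbf{Verdict.} Your second part (no isolated points) is correct and is essentially the paper's argument. There is a genuine gap in the first part, at exactly the step you flag and leave open.

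\textbf{Second part.} Near an isolated point $x$ of $Z_J$, the function $K|z|^2-C$ is a strict local subsolution for $K$ large. (The paper subtracts a local potential of $(1-\tau_*)\omega$; your version also works, since adding $(1-\tau_*)\omega\ge0$ only lowers $P_\chi$.) It lies below $\underline v_*$ near the boundary sphere and above it near $x$, so the (regularized) maximum is a barrier that is finite at $x$. This contradicts minimality.

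\textbf{Why your first part fails.} Integrating the non-pluripolar products of the barrier current over $V$ does not give the cohomological inequality. Since $V\not\subset Z_{\mathbf b}$ does not prevent $V$ from meeting $Z_{\mathbf b}\supset Z_J$, the potential has poles on $V\cap Z_{\mathbf b}$. Then $\int_V\chi\wedge\langle T^{p-1}\rangle$ can be strictly smaller than $\beta\cdot\{T\}^{p-1}\cdot V$. You have no mechanism forcing the mass lost in $\langle T^p\rangle$ to dominate the mass lost in the mixed term, so the pointwise inequality does not integrate to $\mathcal J_p(V)\ge\eta\,\alpha^p\cdot V$. None of your proposed fixes removes the poles:
\begin{enumerate}
\item Truncation by a constant fails, as you note yourself.
\item \cref{thm:global-cone-regularization} preserves the cone only for small $b$, so $E_b(T)$ survives and the regularized current is still singular.
\item Every barrier equals $-\infty$ on $Z_J$, so no cone-preserving modification of a barrier alone can give a bounded potential on a $V$ that meets $Z_J$.
\end{enumerate}

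\textbf{The missing idea.} Truncate not by a constant but by a genuine global smooth subsolution that is not uniformly strict: the approximants $u_j$ of \cref{thm:smooth-approximants}, which satisfy $P_\chi(\omega_j)<c$ in the class $(1+s_j)\alpha$.
\begin{enumerate}
\item The barrier $\underline v$ is also $(1+s_j)\omega$-psh, with $(1+s_j)\omega+\ddc\underline v=T+(\tau+s_j)\omega$.
\item Fix $U\Subset X\setminus Z_{\mathbf b}$ meeting $V_{\mathrm{reg}}$, and choose $C_j$ with $\underline v+C_j>u_j+1$ on $\overline U$.
\item Set $\Omega_j=(1+s_j)\omega+\ddc\widetilde{\max}_\delta(u_j,\underline v+C_j)$.
\end{enumerate}
Near $Z_{\mathbf b}$ the branch $u_j$ dominates. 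So the glued potential is bounded and continuous (smooth if you use the realizing barrier $\underline v_*$, as the paper does), and integrals over $V$ are cohomological.

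By \cref{lem:cone-maxima}, $P_\chi(\Omega_j)<c$ everywhere, so $c\Omega_j^p-p\chi\wedge\Omega_j^{p-1}\ge0$ on $V_{\mathrm{reg}}$. On $U$ one has $P_\chi(\Omega_j)\le c-\eta$, which gives a contribution independent of $j$. Hence
\[
(1+s_j)^{p-1}\bigl(\mathcal J_p(V)+cs_j\,\alpha^p\cdot V\bigr)\ge\eta\,c^{-p}\int_{V\cap U}\chi^p>0,
\]
which contradicts $\mathcal J_p(V)=0$ as $s_j\to0$.

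The key trade is to give up strictness away from $U$ in exchange for boundedness and cohomological masses; this is the step your approach could not make. It also shows that your target $\mathcal J_p(V)\ge\eta\,\alpha^p\cdot V$ is stronger than needed: a positive lower bound coming from $V\cap U$ suffices.
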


\begin{proof}
Let $\underline v_*$ be the realizing barrier for $Z_J$, with
$T_*=(1-\tau_*)\omega+\ddc\underline v_*$ and
$P_\chi(T_*)\le c-\eta_*$. Let
$\omega_j=(1+s_j)\omega+\ddc u_j$ be the approximants of
\cref{thm:smooth-approximants}, with $s_j\downarrow0$.

Suppose that a null subvariety $Y^m$ is not contained in $Z_J$.
Choose $U\Subset X\setminus Z_J$ meeting $Y_{\mathrm{reg}}$ and constants
$C_j$ such that $\underline v_*+C_j>u_j+1$ on $\overline U$.
Near $Z_J$ the smooth branch $u_j$ dominates; choosing the smoothing scale
below these two gaps, the regularized maximum defines
$\Omega_j\in(1+s_j)\alpha$. By \cref{lem:cone-maxima},
$P_\chi(\Omega_j)<c$ on $X$, while on $U$,
\[
 \Omega_j=T_*+(\tau_*+s_j)\omega,
 \qquad P_\chi(\Omega_j)\le c-\eta_*.
\]
Thus $c\Omega_j^m-m\chi\wedge\Omega_j^{m-1}$ is nonnegative on
$Y_{\mathrm{reg}}$ and at least $\eta_*c^{-m}\chi^m$ on $Y\cap U$.
Integration gives
\begin{equation}\label{eq:Z_JcontainNullJ}
 (1+s_j)^{m-1}
 \left(\mathcal J_m(Y)+cs_j\int_Y\omega^m\right)
 \ge \eta_*c^{-m}\int_{Y\cap U}\chi^m>0.
\end{equation}
This contradicts $\mathcal J_m(Y)=0$ as $j\to\infty$, and hence
$\NullJ(\alpha,\beta)\subset Z_J$.

If $x_0$ were an isolated component of $Z_J$, choose a coordinate ball
$B$ about $x_0$ whose boundary annulus misses $Z_J$, and let $h$ be a
local potential of $(1-\tau_*)\omega$.  For $K$ large,
$q=K|z|^2-h-C$ satisfies the strict $P_\chi$-cone condition on $B$.
Choose $C$ so that $q<\underline v_*$ on the boundary annulus.  Replacing
$\underline v_*$ by $\max\{\underline v_*,q\}$ on $B$ gives, by
\cref{lem:cone-maxima}, a barrier that is finite at $x_0$.  This
contradicts the definition of $Z_J$.
\end{proof}

\begin{remark} \label{rem:nonReg2}
The same argument excludes local ambient $C^2$ regularity of Murakami's
weak solution at a point of the null locus.  Indeed, suppose that $u$ is
$C^2$ near $x\in\NullJ(\alpha,\beta)$, and choose an irreducible null
subvariety $Y^m$ containing $x$.  On a relatively compact neighborhood
$U$ meeting $Y_{\mathrm{reg}}$, the equation and compactness give
$P_\chi(\omega_u)\le c-\eta$ for some $\eta>0$.  Choose $C_j$ so that
 $u+C_j>u_j+1$ on $\overline U$ and set
\[
 \Omega_j=(1+s_j)\omega+\ddc\max\{u_j,u+C_j\}.
\]
The weak cone condition for $u$ and \cref{lem:cone-maxima} give
$P_\chi(\Omega_j)\le c$ globally, while the inequality is uniformly
strict on $U$.  The local potential of $\Omega_j$ maximum is bounded. Integration of $c\Omega_j^m-m\Omega_j^{m-1}\wedge \chi$ on $Y_{\mathrm{reg}}$ therefore gives the analogue of
\eqref{eq:Z_JcontainNullJ}, again contradicting
$\mathcal J_{m}(Y)=0$ as $j\to\infty$.
\end{remark}

The reverse inclusion will be proved by excising every irreducible
component $Y\subset Z_J$ for which $\mathcal J_{\dim Y}(Y)>0$.  We will find in a neighborhood of $Y$ a local strict barrier with logarithmic singularities along a proper analytic subset of $Y$. Then we glue with a barrier with minimal barrier locus on $X$ to excise generic points of $Y$. This will result a contradiction since $Z_J$ is minimal.

Fix an irreducible subvariety $Y\subsetneq X$ of dimension
$1\le m\le n-1$.  Choose an embedded
resolution
\[
 \mu:\widetilde X\longrightarrow X
\]
of the pair $(Y,Z_0)$ which is an isomorphism over $X\setminus Z_0$.
Let $\widetilde Y$ be the smooth strict transform of $Y$, put
$f=\mu|_{\widetilde Y}$, and fix a K\"ahler form $\varpi$ on
$\widetilde X$.  We write $\varpi_Y=\varpi|_{\widetilde Y}$.

\begin{proposition}[Relative local cone patch]\label{lem:local-cone-patch}
Notations as above. Suppose that $\mathcal J_m(Y)>0$, and let
$Z_0\subsetneq Y$ be a proper analytic subset containing
$Y_{\mathrm{sing}}$.  There exists a proper analytic set $Z$ in $Y$ containing $Z_0$, 
constants $0<\tau<1$, $\eta>0$, $\ell>0$, an integer $k\ge1$, and
$\varepsilon>0$ with the following property.  Set
\[
 d_Z(x)=\min\{1,\dist(x,Z)\},\qquad
 d_Y(x)=\min\{1,\dist(x,Y)\},
\]
and define
\[
 \mathcal U=\{x\in X\setminus Z:
        d_Y(x)<\varepsilon d_Z(x)^k\},
\]
then there is a smooth function $q$ on $\mathcal U$ such that
\begin{equation}\label{eq:local-ambient-P-patch}
 P_\chi\bigl((1-\tau)\omega+\ddc q\bigr)\le c-\eta
 \quad\text{on }\mathcal U,
\end{equation}
and
\begin{equation}\label{eq:local-patch-log-bound}
 q(x)\le \ell\log d_Z(x)+C.
\end{equation}
\end{proposition}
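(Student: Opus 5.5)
The plan has three steps: build an intrinsic strict subsolution on $\widetilde Y$ with poles along an analytic set, extend it to a tubular neighbourhood of $\widetilde Y$ by adding a large multiple of the squared distance in normal directions, and push everything down to $X$ through $\mu$.

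\textbf{Step 1: intrinsic strict subsolution on $\widetilde Y$.} Since $\mathcal J_m(Y)>0$, the pulled-back pair $(f^*\alpha|_{\widetilde Y}, f^*\beta|_{\widetilde Y})$ on the $m$-dimensional manifold $\widetilde Y$ has positive top-degree defect. These classes are only nef and big, so we perturb to $(1-\tau)f^*\alpha+\delta\varpi_Y$. Since $\mathcal J_m(Y)>0$ is an open condition, it survives this perturbation for small $\tau,\delta$. We then apply Chen's mass concentration, \cref{prop:mass-current} (in the appendix form \cref{prop:full-trace-concentration}), on $\widetilde Y$, with $c$ as the $m$-dimensional cone constant. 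The strict top-degree margin should yield a current $S_Y\in(1-\tau)f^*\alpha$, possibly after subtracting a small multiple of an exceptional divisor to remove $\varpi_Y$, satisfying $P_{f^*\chi}(S_Y)\le c-2\eta$.

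Here the cone is the $P$-cone relative to the $m$-dimensional tangent space, i.e.\ the relevant inequality is the $(m-1)$-hyperplane trace. This is weaker than the ambient $P$ condition, and Step 2 has to compensate for the difference. We then regularize $S_Y$ using \cref{thm:global-cone-regularization} and \cref{prop:global-growth} on $M=\widetilde Y$, exactly as in \cref{thm:strict-barrier}. The resulting potential $q_Y$ is smooth off a proper analytic set $\widetilde Z\supset f^{-1}(Z_0)$, has logarithmic poles there (add $\delta\psi_{\widetilde Z}$ as before), and has a polynomial gradient bound. Take $Z:=f(\widetilde Z)\cup Z_0$; this is a proper analytic subset of $Y$ because $f$ is an isomorphism off $Z_0$.

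\textbf{Step 2: extension to a horn neighbourhood.} Over $Y\setminus Z$ we identify a tubular neighbourhood with the normal bundle. Define
\[
q=q_Y\circ\pi+h_{\rm corr}+K\,d_Y^2,
\]
where $h_{\rm corr}$ is a smooth correction, with $\ddc h_{\rm corr}$ bounded, arranged so that the restriction of $(1-\tau)\omega+\ddc q$ to $Y$ equals $(1-\tau)\omega|_Y+\ddc q_Y$.

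The term $K\,d_Y^2$ adds a large positive normal block. In block form, the tangential block is close to the strict intrinsic subsolution, the normal block is about $K$, and the mixed terms are of size $|\nabla q_Y|+|\nabla^2 q_Y|\cdot d_Y$. By block inversion (the Schur complement argument used in \cref{lem:matrix-minimization}), the inverse restricted to the normal directions is $O(1/K)$. The maximum over hyperplanes $H$ of $\tr_{A|_H}\chi|_H$ then becomes $P$ of the tangential block plus $O(1/K)$ plus mixed-term errors. Note that $P$ on $Y$ is already the ambient $P$ at the tangential level: when $m\le n-1$, the best hyperplane contains all of $TY$. So the constraint comes from the tangential trace, which is the full $Q$ on $Y$, and this is why the mass concentration in Step 1 should be run for the $Q$-operator on $\widetilde Y$. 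The appendix provides both the $P$ and the $Q$ versions.

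The errors are controlled by the polynomial bounds $|\nabla^k q_Y|\le C d_Z^{-N}$ together with the smallness $d_Y<\varepsilon d_Z^k$. Choosing $K\sim d_Z^{-2N}$ is not allowed, since $K$ must be a constant. Instead we let the normal weight vary, taking $K\,d_Y^2\,d_Z^{-M}$. Its Hessian error terms are of size $d_Y^2 d_Z^{-M-2}\le \varepsilon^2 d_Z^{2k-M-2}$, which is small once $k$ is large. This choice of exponents, so that all cross terms are absorbed, is where I expect the main obstacle.

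\textbf{Step 3: conclusion.} We push forward through $\mu$, which is biholomorphic off $Z_0$, so that $\mathcal U$ corresponds to a horn region in $\widetilde X$ where $d_{\widetilde Z}\asymp d_Z^{O(1)}$ by Łojasiewicz. The bound \eqref{eq:local-patch-log-bound} follows from the upper logarithmic bound on $q_Y$ together with $K\,d_Y^2\,d_Z^{-M}\le K\varepsilon^2 d_Z^{2k-M}\le C$. Finally, \eqref{eq:local-ambient-P-patch} follows by enlarging $\eta$-losses by the $1+C\varepsilon$ factors from \eqref{eq:F-perturbation}.
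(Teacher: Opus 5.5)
\textbf{Gap in Step 1.} Your Step 1 never supplies the input that mass concentration needs. \cref{prop:full-trace-concentration} does not turn positivity of the top-degree defect into a cone current on its own. It requires Kähler forms $\omega_j\in\alpha_j$ on $\widetilde Y$ with $\alpha_j\to f^*\alpha$ and $F_{\chi_j}(\omega_j)<c$. \cref{prop:mass-current} assumes a $J$-semistable pair on the manifold itself, and you have not checked this for the pulled-back, perturbed pair on $\widetilde Y$.

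Shrinking to $(1-\tau)f^*\alpha+\delta[\varpi_Y]$ before concentrating actually goes the wrong way. Suppose $Y$ contains a null subvariety $W\not\subset Z_0$ of dimension $p<m$; nothing excludes this, since $Y$ is a component of $Z_J\supset\NullJ$. Restrict any smooth form in that class with $Q<c$ to the strict transform of $W$ and integrate. The defect tends to $(1-\tau)^{p-1}\bigl(\mathcal J_p(W)-c\tau\,\alpha^p\cdot W\bigr)=-(1-\tau)^{p-1}c\tau\,\alpha^p\cdot W<0$, so for small perturbations no such forms exist.

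Also, the constant must be the ambient $c$, as you eventually note with $Q$ in Step 2. For the intrinsic cone constant of $Y$ the top-degree defect vanishes identically, and part (i) of the proposition does not apply.

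The missing idea is to import subsolutions from the ambient problem by restricting the approximants $\omega_j$ of \cref{thm:smooth-approximants}:
\begin{itemize}
\item Since $m\le n-1$, a trace over the (at most $m$-dimensional) image of $T\widetilde Y$ is dominated by the maximal hyperplane trace. Hence $Q_{f^*\chi}(f^*\omega_j+\varepsilon_j\varpi_Y)\le P_\chi(\omega_j)=c-\eta_j$.
\item With $\chi_j=f^*\chi+\delta_j\varpi_Y$ and $\delta_j\le\varepsilon_j\eta_j/(2m)$, the strict inequality $Q_{\chi_j}<c$ survives.
\item The projection formula gives $D=\mathcal J_m(Y)>0$.
\item Part (i) with $F=Q$ then yields $T\in f^*\alpha$ with $Q_{f^*\chi+\lambda\varpi_Y}(T)\le c-\eta$. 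Only afterwards do you scale by $(1-\tau)$, using $Q(sA)=s^{-1}Q(A)$.
\end{itemize}
No exceptional divisor has to be subtracted, because the appendix argument already returns to the limit class.

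\textbf{Step 2 is a different route.} Your global extension $q_Y\circ\pi+K\,d_Y^2d_Z^{-M}$ can probably be made to work, but it needs more than you have:
\begin{itemize}
\item polynomial bounds on $\nabla^2 q_Y$, whereas \cref{prop:global-growth} only gives the gradient bound;
\item a smooth substitute for $d_Z$, since the distance is only Lipschitz and $\ddc(d_Y^2d_Z^{-M})$ is undefined as written (for instance, pull back $\sum|\sigma_l|^2$ built from generators of the ideal of $\widetilde Z$);
\item control of the first-order cross terms $\partial d_Y^2\wedge\bar\partial d_Z^{-M}\sim d_Yd_Z^{-M-1}$, which you omitted.
\end{itemize}

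The paper avoids all of this. In adapted holomorphic coordinates $(z_i,w_i)$ with $\widetilde Y=\{w_i=0\}$, it sets $q_i=\varphi(z_i)+h_i(z_i,0)+K|w_i|^2-h_i(z_i,w_i)$. Then $(1-\tau)\mu^*\omega+\ddc q_i=\mathrm{diag}(\Theta,K\ddc|w_i|^2)$ exactly, with a constant $K$, because $\varphi$ is composed with a holomorphic projection and so produces no mixed terms. The price is gluing the local branches by a penalized regularized maximum, which preserves the cone condition by \cref{lem:cone-maxima}. This gluing needs only the overlap bound $|q_i-q_{i'}|\le C d_{\widetilde Y}d_{\widetilde Z}^{-N}$, which comes from the gradient estimate and is what forces the horn $d_{\widetilde Y}<\varepsilon_0d_{\widetilde Z}^{k_0}$ with $k_0>N+1$.

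Your Step 3 matches \cref{lem:resolved-tube-comparison} in outline. You still need the Jacobian bound $\lVert D\mu^{-1}\rVert\le Cd_Z^{-A}$ to show that $\mathcal U\subset\mu(\widetilde{\mathcal U})$.
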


We divide the proof into 4 steps. 
\medskip
\noindent\textit{Step 1}. We first use mass concentration to find a positive current on $\widetilde Y$.
Let $\omega_j=(1+s_j)\omega+\ddc u_j$ be the forms from
\cref{thm:smooth-approximants}, and set
$\eta_j=c-\max_XP_\chi(\omega_j)>0$.  Choose
$\varepsilon_j\downarrow0$ and
$0<\delta_j\le\varepsilon_j\eta_j/(2m)$, and put
\[
 \widetilde\omega_j=f^*\omega_j+\varepsilon_j\varpi_Y,
 \qquad
 \widetilde\chi_j=f^*\chi+\delta_j\varpi_Y.
\]
The rank inequality gives
\[
 Q_{f^*\chi}(\widetilde\omega_j)\le P_\chi(\omega_j),
 \qquad
 Q_{\widetilde\chi_j}(\widetilde\omega_j)<c,
\]
and the projection formula gives
\[
 c[\widetilde\omega_j]^m
 -m[\widetilde\chi_j]\cdot[\widetilde\omega_j]^{m-1}
 \longrightarrow\mathcal J_m(Y)>0.
\]
Thus \cref{prop:full-trace-concentration} applies directly after multiplying a small $1-\tau$ $\tau>0$, we obtain
a K\"ahler current $S\in (1-\tau)f^*\alpha$ and constants
$a,\lambda,\eta_1>0$ such that
\begin{equation}\label{eq:strict-current-on-resolution}
 S\ge a\varpi_Y,
 \qquad
 Q_{f^*\chi+\lambda\varpi_Y}(S)\le c-4\eta_1
\end{equation}
in the local-convolution sense.

\medskip
\noindent\textit{Step 2.} Next, we  extend $S$ to a smooth current $(1-\tau)\mu^*\omega+\ddc \widetilde q$ on a resolved tube $\widetilde{\mathcal U}$ such that  
\begin{equation}\label{eq:resolved-ambient-P-patch}
 P_{\mu^*\chi+\lambda\varpi}
 \bigl((1-\tau)\mu^*\omega+\ddc\widetilde q\bigr)
 \le c-\eta_1, \qquad \text{on }\ \widetilde{\mathcal U}
\end{equation}

Apply
\cref{thm:global-cone-regularization,prop:global-growth} on
$\widetilde Y$ with $F=Q$ to $S$ in Step 1.  As in the proof of
\cref{thm:strict-barrier}, insert a sufficiently small logarithmic pole
along $f^{-1}(Z_0)$ and the analytic set produced by regularization.
This gives a proper analytic set $\widetilde Z\subsetneq\widetilde Y$
containing $f^{-1}(Z_0)$ and a function
$\varphi\in C^\infty(\widetilde Y\setminus\widetilde Z)$ such that, for
\[
 \Theta=(1-\tau)\mu^*\omega|_{\widetilde Y}+\ddc\varphi,
 \qquad
 \widehat\chi=(\mu^*\chi+\lambda\varpi)|_{\widetilde Y},
\]
one has
\begin{equation}\label{eq:resolved-tangential-Q}
 Q_{\widehat\chi}(\Theta)\le c-3\eta_1.
\end{equation}
Define
\(
 d_{\widetilde Z}(\widetilde y)
 =\min\{1,\dist_{\widetilde Y}(\widetilde y,\widetilde Z)\},
\) and \(
 d_{\widetilde Y}(\widetilde x)
 =\min\{1,\dist_{\widetilde X}(\widetilde x,\widetilde Y)\}.
\)
Then
\begin{equation}\label{eq:resolved-tangential-growth}
 \varphi(\widetilde y)
 \le\ell\log d_{\widetilde Z}(\widetilde y)+C,
 \qquad
 |\nabla\varphi(\widetilde y)|
 \le C d_{\widetilde Z}(\widetilde y)^{-N}.
\end{equation}

Choose finitely many adapted coordinate neighborhoods
$B_i'\Subset B_i\subset\widetilde X$ such that
$B_i'\cap\widetilde Y$ cover $\widetilde Y$ and
$\widetilde Y\cap B_i=\{w_i=0\}$ in coordinates $(z_i,w_i)$.  If
$h_i$ is a local potential of $(1-\tau)\mu^*\omega$, put
\[
 q_i(z_i,w_i)
 =\varphi(z_i)+h_i(z_i,0)+K|w_i|^2-h_i(z_i,w_i).
\]
Then
\[
 (1-\tau)\mu^*\omega+\ddc q_i
 =
 \begin{pmatrix}
  \Theta(z_i)&0\\
  0&K\,\ddc|w_i|^2
 \end{pmatrix}.
\]
Since $\widehat\chi\ge\lambda\varpi_Y$,
\eqref{eq:resolved-tangential-Q} gives a uniform positive lower bound
for $\Theta$.  We may choose $K$ independently of $i$ and shrink the
normal directions so that
\begin{equation}\label{eq:local-extension-branches}
 (1-\tau)\mu^*\omega+\ddc q_i\ge a_0\varpi,
 \qquad
 P_{\mu^*\chi+\lambda\varpi}
 \bigl((1-\tau)\mu^*\omega+\ddc q_i\bigr)
 \le c-2\eta_1.
\end{equation}

Choose $\vartheta_i:B_i\to[-1,0]$ which vanishes near $B_i'$ and is
equal to $-1$ near the outer boundary of $B_i$.  For a sufficiently
small fixed $d>0$, the functions
$\widehat q_i=q_i+d\vartheta_i$ satisfy
\begin{equation}\label{eq:penalized-extension-branches}
 (1-\tau)\mu^*\omega+\ddc\widehat q_i
 \ge\frac{a_0}{2}\varpi,
 \qquad
 P_{\mu^*\chi+\lambda\varpi}
 \bigl((1-\tau)\mu^*\omega+\ddc\widehat q_i\bigr)
 \le c-\frac32\eta_1.
\end{equation}
On overlaps, \eqref{eq:resolved-tangential-growth} gives
\begin{equation}\label{eq:extension-overlap}
 |q_i(\widetilde x)-q_{i'}(\widetilde x)|
 \le C d_{\widetilde Y}(\widetilde x)
 d_{\widetilde Z}(\pi(\widetilde x))^{-N}.
\end{equation}

In a tubular neighborhood $U$ of $\widetilde Y$ in $\widetilde X$, let $\pi: U\to \widetilde Y$ be the nearest point projection. Fix $k_0>N+1$ and take $\varepsilon_0>0$ sufficiently small. \begin{equation}\label{eq:controlled-resolved-tube}
 \widetilde{\mathcal U}
 =\left\{\widetilde x:\pi(\widetilde x)\in
 \widetilde Y\setminus\widetilde Z,\quad
 d_{\widetilde Y}(\widetilde x)
 <\varepsilon_0
 d_{\widetilde Z}(\pi(\widetilde x))^{k_0}\right\}.
\end{equation}  On
$\widetilde{\mathcal U}$, \eqref{eq:extension-overlap} gives
$|q_i-q_{i'}|<d/4$ whenever both branches are defined.  If
$\widetilde x$ approaches the outer boundary of $B_i$, some
$B_{i'}'$ still contains $\pi(\widetilde x)$ and
\[
 \widehat q_i(\widetilde x)
 \le\widehat q_{i'}(\widetilde x)-\frac{3d}{4}.
\]
Hence the regularized maximum of the active branches
\[
 \widetilde q
 =\widetilde{\max}_{\delta}(\widehat q_i),
 \qquad 0<\delta<\frac d8,
\]
defines a smooth function on $\widetilde{\mathcal U}$; see
\cite[I.5.18]{Demailly}.  By \cref{lem:cone-maxima}, it satisfies
\eqref{eq:resolved-ambient-P-patch}.  The upper bound for the
regularized maximum, together with
\eqref{eq:resolved-tangential-growth} and
\eqref{eq:extension-overlap}, gives
\begin{equation}
    \quad \widetilde q(\widetilde x)
 \le\ell\log d_{\widetilde Z}(\pi(\widetilde x))+C, \qquad \text{on }\ \widetilde{\mathcal U}, \label{eq:resolved-patch-log-bound}
\end{equation}
for some $\ell,C>0$.

\medskip
\noindent\textit{Step 3.} Next, we would like to push $\widetilde q$ to $\mathcal{U}$. We need to compare with $\mu(\widetilde{\mathcal {U}})$ and $\mathcal{U}$.

\begin{lemma}
\label{lem:resolved-tube-comparison}
Notations as above. Let $Z=\mu(\widetilde Z)$. There are constants
$A,C>0$ and an integer $L\ge1$ with the following properties.  If
$x\in X\setminus Z$, $d_Y(x)<\frac12d_Z(x)$, and
$\widetilde x=\mu^{-1}(x)$, then
\begin{equation}\label{eq:resolved-normal-distance}
 d_{\widetilde Y}(\widetilde x)
 \le C d_Z(x)^{-A}d_Y(x).
\end{equation}
For $\widetilde y\in\widetilde Y$ one also has
\begin{equation}\label{eq:resolved-set-comparison}
 d_{\widetilde Z}(\widetilde y)^L
 \le C d_Z(f(\widetilde y)).
\end{equation}
Moreover, if $k_0>L$ and 
$k>A+k_0$, there is $\varepsilon>0$ such that
\begin{equation}\label{eq:downstairs-inside-upstairs-tube}
 \mathcal U=\{x\in X\setminus Z:
 d_Y(x)<\varepsilon d_Z(x)^k\}
 \subset
 \mu\bigl(\widetilde{\mathcal U}\setminus
 (\widetilde Z\cup\operatorname{Exc}(\mu))\bigr).
\end{equation}
For $x\in\mathcal U$, with
$\widetilde y=\pi(\mu^{-1}(x))$, one has
\begin{equation}\label{eq:two-sided-resolved-distance}
 C^{-1}d_Z(x)
 \le d_{\widetilde Z}(\widetilde y)
 \le C d_Z(x)^{1/L}.
\end{equation}
\end{lemma}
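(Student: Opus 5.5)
Throughout, the only tools are two \L ojasiewicz-type inequalities attached to the proper holomorphic map $\mu$, plus elementary tube geometry; all constants are allowed to depend on the fixed resolution and on $Z$. Recall that $\mu$ is an isomorphism over $X\setminus Z_0$, and that $\widetilde Z\supset f^{-1}(Z_0)$, so $Z=\mu(\widetilde Z)\supset Z_0$ is a proper analytic subset of $Y$ by Remmert's theorem.

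\emph{Step 1: \eqref{eq:resolved-set-comparison}.} Both $\widetilde y\mapsto d_{\widetilde Z}(\widetilde y)$ and $\widetilde y\mapsto d_Z(f(\widetilde y))$ are comparable to subanalytic functions on the compact manifold $\widetilde Y$. Their zero sets are $\widetilde Z$ and $f^{-1}(Z)\supset\widetilde Z$ respectively. The inclusion $f^{-1}(Z)\subset \widetilde Z$ is the delicate point: it holds after enlarging $\widetilde Z$ to $f^{-1}(f(\widetilde Z))$. This enlargement is harmless for Step~2 of the proof of \cref{lem:local-cone-patch}, since adding a small log pole along a larger analytic set only improves the upper bound and preserves the strict cone condition. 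With equal zero sets, the \L ojasiewicz inequality gives \eqref{eq:resolved-set-comparison} for some integer $L$.

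\emph{Step 2: \eqref{eq:resolved-normal-distance}.} The differential $d\mu$ is invertible off $\operatorname{Exc}(\mu)\subset\mu^{-1}(Z_0)$. Its inverse has norm bounded by a negative power of $\operatorname{dist}(\widetilde x,\mu^{-1}(Z_0))$, hence, by \L ojasiewicz applied to $|\det d\mu|$ and to $d_Z\circ\mu$, by $C\,d_Z(\mu(\widetilde x))^{-A_1}$. Take $x$ with $d_Y(x)<\tfrac12 d_Z(x)$, and let $y\in Y$ be a nearest point. Then $d_Z(y)\ge \tfrac12 d_Z(x)$, and the segment from $x$ to $y$ stays at distance $\gtrsim d_Z(x)$ from $Z$. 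Lift this segment by $\mu^{-1}$, which is defined along it because the segment avoids $Z_0$. The lifted curve ends on $\widetilde Y$ and has length at most $C d_Z(x)^{-A_1}d_Y(x)$. This yields \eqref{eq:resolved-normal-distance} with $A=A_1$.

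\emph{Step 3: \eqref{eq:two-sided-resolved-distance} and \eqref{eq:downstairs-inside-upstairs-tube}.} Let $x\in\mathcal U$ and $\widetilde x=\mu^{-1}(x)$. By Step~2,
\[
d_{\widetilde Y}(\widetilde x)\le C\varepsilon\, d_Z(x)^{k-A}.
\]
Since $\mu$ is Lipschitz, $\widetilde y=\pi(\widetilde x)$ satisfies
\[
|d_Z(f(\widetilde y))-d_Z(x)|\le C d_{\widetilde Y}(\widetilde x)\le C\varepsilon d_Z(x)^{k-A},
\]
which is at most $\tfrac12 d_Z(x)$ for $\varepsilon$ small. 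Hence $d_Z(f(\widetilde y))\simeq d_Z(x)$.

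For the upper bound in \eqref{eq:two-sided-resolved-distance}, combine this with \eqref{eq:resolved-set-comparison}, which gives $d_{\widetilde Z}(\widetilde y)\le Cd_Z(x)^{1/L}$. For the lower bound, use that $f$ is Lipschitz and $f(\widetilde Z)\subset Z$; this gives $d_Z(f(\widetilde y))\le C d_{\widetilde Z}(\widetilde y)$, hence $d_{\widetilde Z}(\widetilde y)\ge C^{-1}d_Z(x)$.

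For \eqref{eq:downstairs-inside-upstairs-tube}, the lower bound just proved gives
\[
\varepsilon_0 d_{\widetilde Z}(\widetilde y)^{k_0}\ge \varepsilon_0C^{-k_0}d_Z(x)^{k_0}.
\]
This dominates $C\varepsilon\, d_Z(x)^{k-A}\ge d_{\widetilde Y}(\widetilde x)$, because $k-A>k_0$ and $\varepsilon\ll\varepsilon_0$. Moreover $\widetilde y\notin\widetilde Z$, and $\widetilde x\notin\operatorname{Exc}(\mu)\cup\widetilde Z$ since $x\notin Z\supset Z_0$. Finally, $\widetilde x$ lies in the domain $U$ of $\pi$ once $\varepsilon$ is small.

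The main obstacle is Step~2. Making the inverse-differential bound uniform requires that the lifted path stays where $d\mu^{-1}$ is controlled, and that it actually reaches $\widetilde Y$ rather than another preimage branch. If this fails, I would instead argue via the \L ojasiewicz inequality for the pair of subanalytic functions $d_{\widetilde Y}$ and $d_Y\circ\mu$ on the region $\{d_Z\circ\mu\ge s\}$, tracking the dependence on $s$ polynomially.
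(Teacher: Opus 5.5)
Your proposal is correct and follows essentially the paper's proof: \L ojasiewicz for the Jacobian of $\mu$ plus lifting a minimizing geodesic from $x$ to its nearest point on $Y$ gives \eqref{eq:resolved-normal-distance}, \L ojasiewicz on $\widetilde Y$ gives \eqref{eq:resolved-set-comparison}, and Lipschitz/triangle-inequality bookkeeping gives the tube inclusion and \eqref{eq:two-sided-resolved-distance}. The points you flagged as delicate are automatic: since $f^{-1}(Z_0)\subset\widetilde Z$ and $\mu$ is injective over $X\setminus Z_0$, one already has $f^{-1}(Z)=\widetilde Z$, so no enlargement is needed, and the endpoint of the lifted path is the unique preimage of a point of $Y\setminus Z_0$, hence lies on $\widetilde Y$. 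Your upper bound in \eqref{eq:two-sided-resolved-distance}, obtained via $d_Z(f(\widetilde y))\le\frac32 d_Z(x)$, is a slightly cleaner variant of the paper's absorption argument, which instead uses $k_0>L$ and the defining inequality of $\widetilde{\mathcal U}$.
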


\begin{proof}
Since $f^{-1}(Z_0)\subset\widetilde Z$ and $\mu$ is injective away
from $Z_0$, one has $f^{-1}(Z)=\widetilde Z$.  Write
$\nu=\mu^{-1}$ on $X\setminus Z$.  If
$E=\operatorname{Crit}(\mu)$, then
$E\subset\mu^{-1}(Z_0)\subset\mu^{-1}(Z)$.  The \L ojasiewicz
inequality and the Lipschitz bound for $\mu$ give
\[
 |\operatorname{Jac}\mu(\widetilde z)|
 \ge C^{-1}\dist(\widetilde z,E)^A
 \ge C^{-1}d_Z(\mu(\widetilde z))^A.
\]
The cofactor formula for $(D\mu)^{-1}$ therefore gives
\begin{equation}\label{eq:inverse-modification-derivative}
 \lVert D\nu(z)\rVert\le C d_Z(z)^{-A}.
\end{equation}
Choose $x_Y\in Y$ with $\dist_X(x,x_Y)=d_Y(x)$.  If
$d_Y(x)<\frac12d_Z(x)$, a minimizing geodesic from $x$ to $x_Y$ stays
at distance at least $\frac12d_Z(x)$ from $Z$.  Since
$\nu(x_Y)\in\widetilde Y$, integrating
\eqref{eq:inverse-modification-derivative} along this geodesic proves
\eqref{eq:resolved-normal-distance}.

When $\widetilde x$ belongs to the domain of $\pi$, set
$\widetilde y=\pi(\widetilde x)$.  The Lipschitz bound for $\mu$ and
the triangle inequality give
\begin{equation}
    \label{eq:resolved-singular-distance1}
 d_Z(x)
 \le C\bigl(d_{\widetilde Y}(\widetilde x)
 +d_{\widetilde Z}(\widetilde y)\bigr).
\end{equation}

If
$g_1,\ldots,g_s$ are local generators of the ideal of $Z$, the common
zero set of $g_\alpha\circ f$ is $\widetilde Z$.  The \L ojasiewicz
inequality on $\widetilde Y$, followed by
$|g_\alpha|\le C d_Z$, gives \eqref{eq:resolved-set-comparison}.

It remains to prove \eqref{eq:downstairs-inside-upstairs-tube}.  Let
$x\in\mathcal U$ and $\widetilde x=\mu^{-1}(x)$.  Taking
$\varepsilon<\frac12$, \eqref{eq:resolved-normal-distance} gives
\begin{equation}
    \label{eq:resolved-singular-distance}
 d_{\widetilde Y}(\widetilde x)
 \le C\varepsilon d_Z(x)^{k-A}.
\end{equation}
Since $k-A>k_0\ge1$ and $d_Z(x)\le1$, decreasing $\varepsilon$
places $\widetilde x$ in the domain of $\pi$.  Put
$\widetilde y=\pi(\widetilde x)$.  After decreasing $\varepsilon$ once
more, \eqref{eq:resolved-singular-distance1} gives
\[
 d_Z(x)
 \le C\bigl(d_{\widetilde Y}(\widetilde x)
 +d_{\widetilde Z}(\widetilde y)\bigr)
 \le\frac12d_Z(x)+C d_{\widetilde Z}(\widetilde y).
\]
Thus
\(
 d_{\widetilde Z}(\widetilde y)\ge C^{-1}d_Z(x),
\)
and hence
\[
 d_{\widetilde Y}(\widetilde x)
 \le C\varepsilon d_Z(x)^{k-A}
 \le C\varepsilon d_Z(x)^{k_0}
 \le C'\varepsilon
 d_{\widetilde Z}(\widetilde y)^{k_0}.
\]
Taking $C'\varepsilon<\varepsilon_0$ proves that
$\widetilde x\in\widetilde{\mathcal U}$.  Since $x\notin Z$ and the
exceptional image is contained in $Z_0\subset Z$, this proves
\eqref{eq:downstairs-inside-upstairs-tube} and the lower bound in
\eqref{eq:two-sided-resolved-distance}.

For the upper bound, put $y=f(\widetilde y)$.  From
\eqref{eq:resolved-set-comparison}, the Lipschitz bound for $\mu$, and
the definition of $\widetilde{\mathcal U}$,
\[
 d_{\widetilde Z}(\widetilde y)^L\le C d_Z(y)
 \le C\bigl(d_Z(x)+\dist_X(x,y)\bigr)
 \le C d_Z(x)+C\varepsilon_0
 d_{\widetilde Z}(\widetilde y)^{k_0}.
\]
Since $k_0>L$ and $d_{\widetilde Z}(\widetilde y)\le1$, the last term
can be absorbed after decreasing $\varepsilon_0$.  Therefore
\(
 d_{\widetilde Z}(\widetilde y)^L\le C d_Z(x),
\)
which proves the upper bound in
\eqref{eq:two-sided-resolved-distance}.
\end{proof}

\medskip
\begin{proof}[Step 4.  Proof of \cref{lem:local-cone-patch}]
Use the current $S$ obtained in Step~1.  In step 2, we obtaining
$\widetilde Z$, $\tau$, $\eta_1$, $N$, and $\ell$, and $\widetilde q$.  Set
$Z=\mu(\widetilde Z)$.  Since
$\mu(\operatorname{Exc}(\mu))\subset Z_0$ and
$f^{-1}(Z_0)\subset\widetilde Z$, Remmert's theorem gives
\(
 Z_0\subset Z\subsetneq Y.
\)

Let $A$ and $L$ be the exponents in
\cref{lem:resolved-tube-comparison}.  Choose
$k_0>\max\{N+1,L\}$ and then $\varepsilon_0>0$ sufficiently small.
The extension lemma gives $\widetilde q$ on
$\widetilde{\mathcal U}$.
Choose any integer $k>A+k_0$ and then
$\varepsilon>0$ as in \cref{lem:resolved-tube-comparison}.  By
\eqref{eq:downstairs-inside-upstairs-tube}, $\widetilde q$ descends to
a smooth function $q$ on
\[
 \mathcal U=\{x\in X\setminus Z:
 d_Y(x)<\varepsilon d_Z(x)^k\}.
\]
Dropping the positive term $\lambda\varpi$ from the reference form in
\eqref{eq:resolved-ambient-P-patch} gives
\eqref{eq:local-ambient-P-patch}.  Moreover,
\eqref{eq:resolved-patch-log-bound} and
\eqref{eq:two-sided-resolved-distance} give
\[
 q(x)
 \le\ell\log d_{\widetilde Z}(\pi(\mu^{-1}(x)))+C
 \le\frac{\ell}{L}\log d_Z(x)+C.
\]
This is \eqref{eq:local-patch-log-bound}.
\end{proof}

Now we finish the proof of $\NullJ=Z_J$.
\begin{proof}[Proof of \cref{thm:locus-identification}]
By \cref{prop:uniform-positive-patch},
$\NullJ(\alpha,\beta)\subset Z_J$, and $Z_J$ has no zero-dimensional
component.  

Suppose that an irreducible component $Y$ of $Z_J$, of
dimension $m\ge1$, satisfies $\mathcal J_m(Y)>0$.  Write
\[
 Z_J=Y\cup Y',
\]
where $Y'$ is the union of the other irreducible components, and set
\(
 Z_0=Y_{\mathrm{sing}}\cup(Y\cap Y').
\)
Apply \cref{lem:local-cone-patch}, obtaining $Z$, $k$, $\varepsilon$,
and $q$.  Since $Y\cap Y'\subset Z$, the analytic separation inequality
allows us to increase $k$ and decrease $\varepsilon$ so that
\[
 \dist(x,Y')>d_Y(x),\qquad x\in\mathcal U.
\]
Thus $\dist(x,Z_J)=d_Y(x)$ on $\mathcal U$.  If $Y'=\varnothing$,
this separation is unnecessary.

Let $\underline v_*$ be the realizing barrier, with
$T_*=(1-\tau_*)\omega+\ddc\underline v_*$ and
$P_\chi(T_*)\le c-\eta_*$.  For every sufficiently large $j$, put
\[
 \lambda_j=\frac{2s_j}{\tau_*+2s_j},\qquad
 v_j=\lambda_j\underline v_*+(1-\lambda_j)u_j,
 \qquad
 \tau_j=\frac{\lambda_j\tau_*}{2}.
\]
Then
\[
 (1-\tau_j)\omega+\ddc v_j
 =\lambda_j T_*+(1-\lambda_j)
   \bigl((1+s_j)\omega+\ddc u_j\bigr),
\]
and hence
\begin{equation}\label{eq:lower-slope-cone}
 P_\chi\bigl((1-\tau_j)\omega+\ddc v_j\bigr)
 \le c-\lambda_j\eta_*.
\end{equation}
The logarithmic lower bound for the realizing barrier gives
\begin{equation}\label{eq:lower-slope-growth}
 v_j(x)\ge
 \lambda_j L_*\log\dist(x,Z_J)-C_j
\end{equation}
for some $L_*>0$.

On the collar
\(
\mathcal{C}= \{\frac{\varepsilon}{2}d_Z(x)^k
 <d_Y(x)<\varepsilon d_Z(x)^k\},
\)
\eqref{eq:local-patch-log-bound} and
\eqref{eq:lower-slope-growth} give
\[
 q(x)\le\ell\log d_Z(x)+C,
 \quad
 v_j(x)\ge\lambda_jL_*k\log d_Z(x)-C_j 
\]
Choose $j$ sufficiently large that
$\tau_j\le\tau$ and $\lambda_jL_*k<\ell$.  Then $q-v_j$ is bounded
above on  $\mathcal{C}$.  After subtracting a constant $C_*$ from $q$, set
\[
 v=
 \begin{cases}
  \max\{v_j,q-C_*\},&\text{on }\mathcal U,\\
  v_j,&\text{on }X\setminus\mathcal U.
 \end{cases}
\]
The two definitions agree near the outer boundary of $\mathcal U$.
Since $\tau_j\le\tau$, the local branch remains strict after adding
$(\tau-\tau_j)\omega$.  Thus
\cref{lem:cone-maxima}, together with
\eqref{eq:local-ambient-P-patch} and
\eqref{eq:lower-slope-cone}, shows that
$v$ is a strict barrier after normalization.

Its pole set is $Z\cup Y'$.  Indeed, both branches tend to $-\infty$
toward $Z$, the glued potential equals $v_j$ near
$Y'\setminus Z$, and $q$ is finite on $Y\setminus Z$.  Since
$Z\subsetneq Y$, this contradicts the minimality of $Z_J$.  Therefore
every irreducible component $Y$ of $Z_J$ satisfies
$\mathcal J_{\dim Y}(Y)=0$.  Hence
$Z_J\subset\NullJ(\alpha,\beta)$, while the reverse inclusion follows
from \cref{prop:uniform-positive-patch}.  Since \(Z_J\) is a closed analytic subset of the compact manifold \(X\), it has only finitely many irreducible components.
\end{proof}

%% file: AppendixMassConcentration.tex
We record the two forms of the Demailly--P\u aun concentration argument
used in the paper. The diagonal construction is taken from
\cite[Lemma~2.1 and Proposition~2.6]{DemaillyPaun}; the twisted equation
and the extraction of a cone current follow
\cite[Theorems~1.14 and~1.18]{Chen21}.

\begin{proposition}[Mass concentration for both $P$ and $Q$]
\label{prop:full-trace-concentration}
Let $(M^n,\gamma)$ be a connected compact K\"ahler manifold, let $\chi\ge0$ be a
smooth closed $(1,1)$-form, and put
$\chi_j=\chi+\delta_j\gamma$, where $\delta_j\downarrow0$.
Let $F\in\{P,Q\}$. Suppose that there are K\"ahler forms
$\omega_j\in\alpha_j$ such that
\[
 \alpha_j\longrightarrow\alpha,\qquad
 F_{\chi_j}(\omega_j)<c.
\]
Set
\(
 D_j:=c\,\alpha_j^n-n[\chi_j]\cdot\alpha_j^{n-1}\ge0,\) \(D_j\longrightarrow D.
\)
Then:
\begin{enumerate}[label=\textup{(\roman*)}]
\item if $D>0$, there are $a,\lambda,\eta>0$ and a K\"ahler current
$T\in\alpha$ such that
\[
 T\ge a\gamma,\qquad
 F_{\chi+\lambda\gamma}(T)\le c-\eta
\]
in the local-convolution sense;
\item the same conclusion holds if $D=0$, $F=P$, $\alpha$ is a
K\"ahler class, and $\chi$ is a K\"ahler form.
\end{enumerate}
\end{proposition}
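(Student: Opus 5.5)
The plan is to follow the Demailly--P\u aun diagonal concentration argument in the twisted form used by Chen \cite[Theorems~1.14 and~1.18]{Chen21}. I will run it simultaneously for $F=P$ and $F=Q$, using only the properties of $F$ recorded in \cref{lem:F-properties} and \cref{lem:elementary}.

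\emph{Step 1: twisted equations on the product.} Fix $j$ and a small concentration scale $\varepsilon>0$. On $\widehat M=M\times M$, equip the class $\mathrm{pr}_1^*\alpha_j+\mathrm{pr}_2^*\alpha_j$ with the product reference form $\widehat\chi_j=\mathrm{pr}_1^*\chi_j+\mathrm{pr}_2^*\chi_j$. Let $\Delta\subset\widehat M$ be the diagonal, and let $\rho_\varepsilon$ be a smooth positive $(n,n)$-form concentrating to $[\Delta]$, as in \cite[Lemma~2.1]{DemaillyPaun}. I would solve the twisted $J$-type equation
\[
 c\,\Omega^{2n}-2n\,\widehat\chi_j\wedge\Omega^{2n-1}
   =\kappa\,\rho_\varepsilon\wedge\Omega^{n}+b_{j,\varepsilon}\,\widehat\gamma^{2n}
\]
in the class $\mathrm{pr}_1^*\alpha_j+\mathrm{pr}_2^*\alpha_j$. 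Here $\kappa>0$ is a fixed small constant, and $b_{j,\varepsilon}\ge0$ is determined by cohomology. Its nonnegativity is where $D>0$ enters: by K\"unneth, $D_j\to D>0$ leaves room for the $\kappa[\Delta]$-mass term for $j$ large. The product form $\mathrm{pr}_1^*\omega_j+\mathrm{pr}_2^*\omega_j$ is a strict subsolution for the corresponding operator, since $F_{\chi_j}(\omega_j)<c$. Hence the twisted solvability theorem of \cite{Chen21} (Chen's Theorem~1.14 in the generalized Monge--Amp\`ere setting, equivalently \cite{Song20}) produces a smooth solution $\Omega_{j,\varepsilon}$ satisfying $F_{\widehat\chi_j}(\Omega_{j,\varepsilon})<c$.

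\emph{Step 2: push forward and take limits.} Set $T_{j,\varepsilon}:=(\mathrm{pr}_1)_*\bigl(\Omega_{j,\varepsilon}\wedge\mathrm{pr}_2^*\gamma^{n}\bigr)/\!\int\gamma^n$, or more precisely Chen's fibre-averaged variant, which is a closed positive current in $\alpha_j$. Let $\varepsilon\to0$ and then $j\to\infty$ along weakly convergent subsequences, obtaining $T\in\alpha$. Two properties must survive the limit. First, the concentrated mass near $\Delta$ gives a uniform lower bound $T\ge a\gamma$; this is the Demailly--P\u aun mechanism \cite[Proposition~2.6]{DemaillyPaun}. Second, the cone inequality survives because $F$ is convex and decreasing (\cref{lem:F-properties}), so it passes to fibre averages and then to local convolutions. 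Combining this with the characterization of \cref{lem:ac-cone}, one obtains $F_{\chi}(T)\le c$ in the local-convolution sense. Since $\chi_j\ge\chi$, the reference form may be lowered to $\chi$ at no cost.

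\emph{Step 3: strictness and the twist $\lambda$.} From $T\ge a\gamma$ I will write $T=(T-\tfrac a2\gamma)+\tfrac a2\gamma$ and compare with $\chi+\lambda\gamma$ for small $\lambda$. By the scaling rules $F_{s\gamma}=sF_\gamma$, together with \cref{lem:elementary} applied with $B$ a multiple of $\chi+\lambda\gamma$, adding the positive piece and then rescaling the class yields $F_{\chi+\lambda\gamma}(T)\le c-\eta$. The strict margin comes from $\kappa>0$: the diagonal mass lowers the effective constant from $c$ to $c/(1+\kappa' c/n)$. This is the same computation that produced \eqref{eq:Tmc-strict}.

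\emph{Case (ii) and the main obstacle.} For (ii), the top defect no longer gives room, so I would take $F=P$ and use the Kähler hypotheses. Fix a K\"ahler $\omega\in\alpha$ and use $\chi\ge\lambda_0\gamma$. Perturb the target constant to $c'=c+\sigma$; this makes the $Q$-type top defect computed with $c'$ positive, namely $\sigma\alpha^n$. Run Steps~1--2 with $c'$, using only the $P$-inequality, which is unaffected by the top-degree term. Then absorb the loss $\sigma$ by the gain from \cref{lem:elementary}, choosing $\sigma\ll\kappa$. The main obstacle is Step~2 uniformly in $\varepsilon$, that is, showing the pushforward retains both the Kähler lower bound and the convolution-sense cone bound for the \emph{hyperplane} operator $P$. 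For $Q$ the trace is linear in the inverse and averaging is clean. For $P$ I would first try the representation of $P$ as a maximum over rank-$(n-1)$ projections used in the proof of \cref{lem:matrix-minimization}, which is subadditive and monotone and therefore compatible with Jensen-type averaging.
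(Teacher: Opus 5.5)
Your proposal has genuine gaps in all three steps and in case~(ii). The missing pieces are the mechanism that transfers the cone bound from the product to $M$, the source of strictness, and the way the paper handles case~(ii).

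\textbf{Steps 1--2: the cone bound does not transfer as you describe.} Your pushforward $(\mathrm{pr}_1)_*(\Omega\wedge\mathrm{pr}_2^*\gamma^n)$ is only the $\gamma^n$-average of the horizontal block of $\Omega$. The Demailly--P\u aun concentration is carried by $\Omega^n$. Weak limits of $\Omega$ are positive $(1,1)$-currents, which put no mass on the codimension-$n$ diagonal once $n\ge2$. So this average sees no diagonal mass and gives no bound $T\ge a\gamma$.

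The paper instead pushes forward $T_{j,t}=\frac1n(\pi_1)_*(\boldsymbol\omega_{j,t}^n\wedge\pi_2^*\gamma)$. This is an average of forms $\mathcal H\ge\mathcal S=A-CV^{-1}C^*$ against the fibre measure $V^{n-1}\wedge\gamma$. Convexity alone gives nothing here, because the block-inverse formula only yields $Q_{\chi_j}(\mathcal S)+Q_\gamma(V)\le c+n$. The argument closes because the second factor is the zero-defect pair $([\gamma],\gamma)$ with product constant $c+n$. Since $V(x,\cdot)\in[\gamma]$, Cauchy--Schwarz gives $\frac1n\int Q_\gamma(V)^2V^n\ge n$, which cancels the extra $n$ and yields $Q_{\chi_j}(T_{j,t})\le c$.

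With your symmetric choice the product constant must be $2c$, not $c$. The analogous balancing then gives only $c+D/\alpha^n$, handing back exactly the defect. Chen's Theorem~1.14 also prescribes a fixed volume form on the right, such as $q\boldsymbol\gamma_t^{2n}+\ell_j\Gamma_0^{2n}$, not $\rho_\varepsilon\wedge\Omega^n$.

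This balancing works only for $Q$. For $F=P$ the product form is not even a $P$-subsolution, since $P$ on a product drops a single eigenvalue: $P_{\widehat\chi_j}(\omega_j\oplus\omega_j)=Q_{\chi_j}(\omega_j)+P_{\chi_j}(\omega_j)$. The paper therefore first uses $D_j>0$ and Chen's Theorem~1.14, with right-hand side $f_j=D_j/\int\chi_j^n$, to replace $\omega_j$ by $\widehat\omega_j\in\alpha_j$ with $Q_{\chi_j}(\widehat\omega_j)<c$. It then runs everything for $Q$ and recovers $P$ from $P\le Q$. The obstacle you anticipate for $P$ never arises.

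\textbf{Step 3 and case (ii).} Step~3 is circular. \cref{lem:elementary} must be applied to the piece \emph{before} the positive form is added, so you would need $F(T-\tfrac a2\gamma)\le c$. Knowing $T\ge a\gamma$ and $F(T)\le c$ implies no strict inequality.

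Producing that bound is the delicate part of the paper's proof, a fixed-metric version of Song's localization. Write $T_{j,t}=T^\circ+D-E$, where $D$ is the near-diagonal part of the fibre integral and $E$ replaces it by the bounded form $\chi_j/n$. Then $Q_{\chi_0}(T^\circ)\le c+n^2m$, where $m$ is the near-diagonal fibre mass. This $m$ tends to $0$ because the weak limit $\Xi_j$ of $\boldsymbol\omega^{n-1}$ puts no mass on $\Delta$. Meanwhile $D\ge(K+2)\sigma\gamma$ after convolution. This gives $T_j\ge K\sigma\gamma$ and $Q_{\chi_j}(T_j-2\sigma\gamma)\le c$. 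Only then does \cref{lem:elementary} give $Q_{\chi_j}(T_j)\le c/(1+2\sigma c/(nN))$.

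Your case~(ii) cannot work as stated. With $c'=c+\sigma$ the top defect is about $\sigma\alpha^n$. So the admissible diagonal mass (your $\kappa$), and hence the gain, are $O(\sigma)$ with geometric constants you do not control. The requirement $\sigma\ll\kappa$ is therefore impossible, and the gain need not beat the loss $\sigma$.

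The paper instead shows that the hypotheses force $J$-semistability of $(\alpha,[\chi])$: restrict to each $V$, pass to the limit, and use $D=0$. It then invokes Chen's Theorem~1.18 to get $S\in\alpha-\varepsilon[\chi]$ with $P_\chi(S)\le c$, and sets $T=S+\varepsilon\chi$. Applying \cref{lem:elementary} to $S$ gives $P_\chi(T)\le c_0<c$. The twist is absorbed via $P_{\chi+\lambda\gamma}(T)\le P_\chi(T)+\lambda Q_\gamma(T)\le c_0+\lambda n/a$.
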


\begin{proof}
After rescaling $\gamma$, we may assume that
$\int_M\gamma^n=1$.

Suppose first that $D=0$ and $F=P$. For every irreducible subvariety
$V\subset M$ of dimension $1\le p<n$, restriction to
$V_{\mathrm{reg}}$ and integration give
\[
 c\,\alpha_j^p\cdot[V]
 -p[\chi_j]\cdot\alpha_j^{p-1}\cdot[V]>0.
\]
Passing to the limit, together with $D=0$ in top degree, shows that
$(M,\alpha,[\chi])$ is $J$-semistable. The perturbed solvability
theorem of \cite{Chen21,Song20}, followed by
\cite[Theorem~1.18]{Chen21}, gives $\varepsilon>0$ and a closed positive
current
\[
 S\in\alpha-\varepsilon[\chi],\qquad
 P_\chi(S)\le c
\]
in the local-convolution sense. Put
$T=S+\varepsilon\chi\in\alpha$. Applying
\cref{lem:elementary} to the absolutely continuous parts and then using
\cref{lem:ac-cone}, we obtain
\[
 P_\chi(T)
 \le \frac{c}{1+\varepsilon c/n}=:c_0<c.
\]
Since $\chi$ is K\"ahler, $T\ge\varepsilon\chi\ge a\gamma$ for some
$a>0$, and hence $Q_\gamma(T)\le n/a$. Therefore
\[
 P_{\chi+\lambda\gamma}(T)
 \le P_\chi(T)+\lambda Q_\gamma(T)
 \le c_0+\lambda n/a.
\]
Choosing $\lambda>0$ so that
$\lambda n/a\le(c-c_0)/2$ proves \textup{(ii)} with
$\eta=(c-c_0)/2$.

Now assume that $D>0$. If $F=P$, then $D_j>0$ for all sufficiently
large $j$. Set
\[
 f_j:=\frac{D_j}{\int_M\chi_j^n}>0.
\]
By \cite[Theorem~1.14]{Chen21}, there is a K\"ahler form
$\widehat\omega_j\in\alpha_j$ satisfying
\[
 Q_{\chi_j}(\widehat\omega_j)
 +f_j\frac{\chi_j^n}{\widehat\omega_j^n}=c.
\]
Thus $Q_{\chi_j}(\widehat\omega_j)<c$. Replacing $\omega_j$ by
$\widehat\omega_j$, we reduce the construction to $F=Q$.

Let $M_1$ and $M_2$ be two copies of $M$, with points denoted by
$x\in M_1$ and $y\in M_2$, and let $\pi_i:M_1\times M_2\to M_i$ be the
projections. On $M_1\times M_2$, set
\[
 \boldsymbol\alpha_j=\pi_1^*\alpha_j+\pi_2^*[\gamma],
 \qquad
 \boldsymbol\chi_j=\pi_1^*\chi_j+\pi_2^*\gamma,
 \qquad
 \boldsymbol c=c+n.
\]
The product form $\pi_1^*\omega_j+\pi_2^*\gamma$ satisfies
$Q_{\boldsymbol\chi_j}<\boldsymbol c$. The intersection number
\begin{equation}\label{eq:product-defect}
 \boldsymbol c\,\boldsymbol\alpha_j^{2n}
 -2n[\boldsymbol\chi_j]\cdot\boldsymbol\alpha_j^{2n-1}
 =\binom{2n}{n}D_j.
\end{equation}

Let $\boldsymbol\gamma_t\in
[\pi_1^*\gamma+\pi_2^*\gamma]$ be the diagonal-concentrating family of
\cite[Lemma~2.1]{DemaillyPaun}. Since $D_j\to D>0$ and
$\int_{M_1\times M_2}\boldsymbol\gamma_t^{2n}$ is independent of $t$,
this choice can be made uniformly and explicitly.  Put
\[
 \Gamma_0:=\pi_1^*\gamma+\pi_2^*\gamma,
 \qquad V_0:=\int_{M_1\times M_2}\Gamma_0^{2n}.
\]
After discarding finitely many $j$, choose $q>0$ such that
$qV_0<\binom{2n}{n}D_j$ and set
\[
 \ell_j:=\frac{\binom{2n}{n}D_j-qV_0}{V_0}>0,
 \qquad
 \rho_{j,t}:=q\boldsymbol\gamma_t^{2n}+\ell_j\Gamma_0^{2n}.
\]
Then
\[
 \rho_{j,t}\ge q\boldsymbol\gamma_t^{2n},\qquad
 \int_{M_1\times M_2}\rho_{j,t}=\binom{2n}{n}D_j.
\]
The second identity and \eqref{eq:product-defect} are the compatibility
condition for the twisted equation. Hence
\cite[Theorem~1.14]{Chen21} gives
$\boldsymbol\omega_{j,t}\in\boldsymbol\alpha_j$ satisfying
\begin{equation}\label{eq:appendix-product-equation}
 Q_{\boldsymbol\chi_j}(\boldsymbol\omega_{j,t})
 +\frac{\rho_{j,t}}{\boldsymbol\omega_{j,t}^{2n}}
 =\boldsymbol c.
\end{equation}
In particular,
\begin{equation}\label{eq:product-volume-lower}
 \boldsymbol\omega_{j,t}^{2n}
 \ge\frac{q}{\boldsymbol c}\boldsymbol\gamma_t^{2n}.
\end{equation}

Define
\begin{equation}\label{eq:fiber-current}
 T_{j,t}:=\frac1n(\pi_1)_*
 \bigl(\boldsymbol\omega_{j,t}^n\wedge\pi_2^*\gamma\bigr).
\end{equation}
This is a smooth positive $(1,1)$-form on $M_1$. Expanding its class
and using $\int_{M_2}\gamma^n=1$ gives $[T_{j,t}]=\alpha_j$.

We next prove the estimate retained by fiber integration. At
$(x,y)\in M_1\times M_2$, write
\[
 \boldsymbol\omega_{j,t}(x,y)
 =\begin{pmatrix}A&C\\ C^*&V\end{pmatrix},
 \qquad
 \mathcal S:=A-CV^{-1}C^*,
\]
relative to
$T^{1,0}_xM_1\oplus T^{1,0}_yM_2$. The block-inverse formula and
\eqref{eq:appendix-product-equation} give
\begin{equation}\label{eq:schur-Q}
 Q_{\chi_j}(\mathcal S)+Q_\gamma(V)\le c+n.
\end{equation}
For fixed $x\in M_1$, define a measure on $M_2$ by
\[
 d\mu_{j,t,x}(y):=V(x,y)^{n-1}\wedge\gamma(y)
 =\frac1nQ_\gamma(V(x,y))V(x,y)^n.
\]
Since $V(x,\cdot)$ represents $[\gamma]$ on $M_2$,
\begin{equation}\label{eq:fiber-normalizations}
 \int_{M_2}d\mu_{j,t,x}=1,\qquad
 \int_{M_2}V^n=1,\qquad
 \int_{M_2}Q_\gamma(V)V^n=n.
\end{equation}
Put $\tau:=Q_\gamma(V)$.  In a $\gamma(y)$-unitary vertical frame, the
wedge expansion in the Schur-complement calculation of
\cite[Lemmas~10.7 and~10.8]{FangMa24} gives the horizontal form
\[
 \mathcal H_{j,t}:=\mathcal S+\tau^{-1}CV^{-2}C^*\ge\mathcal S.
\]
Comparing the horizontal coefficient in
$\boldsymbol\omega_{j,t}^n\wedge\pi_2^*\gamma$ gives the exact identity
\begin{equation}\label{eq:exact-fiber-average}
 T_{j,t}(x)=\int_{M_2}\mathcal H_{j,t}(x,y)\,d\mu_{j,t,x}(y),
\end{equation}
Because $Q_{\chi_j}$ is decreasing and convex,
\eqref{eq:schur-Q} implies
\begin{align}
 Q_{\chi_j}(T_{j,t})(x)
&\le\int_{M_2}Q_{\chi_j}(\mathcal H_{j,t})\,d\mu_{j,t,x}\notag\\
&\le\int_{M_2}Q_{\chi_j}(\mathcal S)\,d\mu_{j,t,x}\notag\\
 &\le c+n-\frac1n
 \int_{M_2}Q_\gamma(V)^2V^n.\label{eq:fiber-Q-bound}
\end{align}
By Cauchy--Schwarz and \eqref{eq:fiber-normalizations},
\[
 \frac1n\int_{M_2}Q_\gamma(V)^2V^n
 \ge\frac1n
 \frac{\left(\int_{M_2}Q_\gamma(V)V^n\right)^2}
      {\int_{M_2}V^n}
 =n.
\]
Thus
\begin{equation}\label{eq:fiber-Q-final}
 Q_{\chi_j}(T_{j,t})\le c.
\end{equation}

Fix $j$ and choose $t_{j,i}\downarrow0$. The lower bound
\eqref{eq:product-volume-lower} and
\cite[Proposition~2.6]{DemaillyPaun} give, after passing to a subsequence,
\[
 \boldsymbol\omega_{j,t_{j,i}}^n\rightharpoonup\Theta_j,
 \qquad \Theta_j\ge b[\Delta],
 \qquad
 \boldsymbol\omega_{j,t_{j,i}}^{n-1}\rightharpoonup\Xi_j.
\]
The constant $b>0$ is independent of all sufficiently large $j$ because
$q$ in \eqref{eq:product-volume-lower} is uniform and the
cohomological masses and background comparisons in
\cite[Proposition~2.6]{DemaillyPaun} are uniform when
$\alpha_j\to\alpha$. Since $\Xi_j$ has
bidegree $(n-1,n-1)$ and $\Delta$ has codimension $n$, the dimension
principle gives $\mathbf 1_\Delta\Xi_j=0$.

We next extract the diagonal mass while retaining the $Q$-bound.  This
is the fixed-metric version of the localization in
\cite[Lemmas~5.6--5.10 and the proof of Proposition~5.1]{Song20}; we
include the argument because the current used here is not the modified
current appearing in that proposition.  Fix $K>4$ and choose $N>0$ so
that
\begin{equation}\label{eq:uniform-upper-reference}
 \chi_j\le N\gamma
\end{equation}
for all sufficiently large $j$.

Let $\zeta_\delta$ be a smooth function on $M_1\times M_2$, equal to
one near $\Delta$ and supported in its $\delta$-neighborhood.  Define
the near-diagonal part and its bounded replacement by
\begin{align*}
 D_{j,t,\delta}
 &:=\frac1n(\pi_1)_*\bigl(
   \zeta_\delta\boldsymbol\omega_{j,t}^n
   \wedge\pi_2^*\gamma\bigr),\\
 E_{j,t,\delta}
 &:=\frac1n(\pi_1)_*\bigl(
   \zeta_\delta\pi_1^*\chi_j\wedge
   V^{n-1}\wedge\pi_2^*\gamma\bigr),\\
 T^\circ_{j,t,\delta}
 &:=T_{j,t}-D_{j,t,\delta}+E_{j,t,\delta}.
\end{align*}
These localized forms need not be closed; throughout this paragraph,
the superscript $(r)$ denotes coefficientwise convolution in the chosen
coordinate chart.
All three forms are positive: the first two are clear, while
\eqref{eq:exact-fiber-average} gives
\begin{equation}\label{eq:localized-fiber-average}
 T^\circ_{j,t,\delta}(x)
 =\int_{M_2}\left((1-\zeta_\delta)\mathcal H_{j,t}
       +\frac{\zeta_\delta}{n}\chi_j\right)d\mu_{j,t,x}.
\end{equation}
For a constant positive form $\chi_0\le\chi_j$ on a coordinate ball,
$Q_{\chi_0}(\chi_j/n)\le n^2$.  Hence convexity,
\eqref{eq:fiber-Q-bound}, and \eqref{eq:localized-fiber-average} give
\begin{equation}\label{eq:localized-Q-bound}
 Q_{\chi_0}(T^\circ_{j,t,\delta})(x)
 \le c+n^2m_{j,t,\delta}(x),
 \qquad
 m_{j,t,\delta}(x):=
 \int_{M_2}\zeta_\delta(x,y)\,d\mu_{j,t,x}(y).
\end{equation}

We record the two consequences of the weak limits above.  First,
for every fixed convolution radius $r>0$,
\begin{equation}\label{eq:diagonal-part-lower}
 D_{j,t_{j,i},\delta}^{(r)}
 \longrightarrow
 \left(\frac1n(\pi_1)_*
  (\zeta_\delta\Theta_j\wedge\pi_2^*\gamma)\right)^{(r)}
 \ge \frac bn\gamma^{(r)}.
\end{equation}
Second, the measures $m_{j,t_{j,i},\delta}\gamma^n$ are bounded by a
fixed multiple of
\[
 (\pi_1)_*\bigl(
 \zeta_\delta\boldsymbol\omega_{j,t_{j,i}}^{n-1}
 \wedge\pi_1^*\gamma^n\wedge\pi_2^*\gamma\bigr).
\]
Every weak limit of the latter is obtained from
$\zeta_\delta\Xi_j$ by the same wedge and pushforward.  Since
$\mathbf1_\Delta\Xi_j=0$, its mass tends to zero as
$\delta\downarrow0$.  After convolution with a fixed kernel, this gives
\begin{equation}\label{eq:replacement-small}
 (m_{j,t_{j,i},\delta})^{(r)}\longrightarrow0,
 \qquad E_{j,t_{j,i},\delta}^{(r)}\longrightarrow0
\end{equation}
as first $i\to\infty$ and then $\delta\downarrow0$.  The convergence is
uniform on the smaller coordinate balls.

A finite coordinate cover, \eqref{eq:diagonal-part-lower}, and
\eqref{eq:replacement-small} now give the required uniform reserve.
Choose $\sigma>0$ so that
$(K+2)\sigma<b/(2n)$, and then choose $r_0>0$ so that
$\gamma^{(r)}$ is uniformly positive on the smaller coordinate balls
for $0<r<r_0$.  These choices are independent of all sufficiently
large $j$ and have the following property.
For every fixed $r\in(0,r_0)$ and $\varepsilon>0$, first choose
$\delta=\delta(j,r,\varepsilon)>0$ and then take
$i\ge i_0(j,r,\varepsilon,\delta)$.  Only $\sigma$ and $r_0$ are
uniform in $j$, which is all that will be used below.  One then has
\begin{align}
 Q_{\chi_0}\bigl((T^\circ_{j,t_{j,i},\delta})^{(r)}\bigr)
 &\le c+\varepsilon,
 \label{eq:localized-cone-estimate}\\
 D_{j,t_{j,i},\delta}^{(r)}
 &\ge(K+2)\sigma\gamma^{(r)},
 &E_{j,t_{j,i},\delta}^{(r)}
 &\le\sigma\gamma^{(r)}.
 \label{eq:localized-reserve}
\end{align}
Here \eqref{eq:localized-cone-estimate} follows by convolving
\eqref{eq:localized-Q-bound} and using convexity; it holds for every
admissible constant $\chi_0\le\chi_j$.

Since
$T_{j,t}=T^\circ_{j,t,\delta}+D_{j,t,\delta}-E_{j,t,\delta}$,
\eqref{eq:localized-reserve} implies
\begin{equation}\label{eq:extracted-reserve}
 (T_{j,t_{j,i}}-K\sigma\gamma)^{(r)}\ge0,
 \qquad
 (T_{j,t_{j,i}}-2\sigma\gamma)^{(r)}
 \ge (T^\circ_{j,t_{j,i},\delta})^{(r)}.
\end{equation}
By monotonicity and \eqref{eq:localized-cone-estimate},
\begin{equation}\label{eq:extracted-Q}
 Q_{\chi_0}\bigl(
 (T_{j,t_{j,i}}-2\sigma\gamma)^{(r)}\bigr)
 \le c+\varepsilon.
\end{equation}

Passing to a further subsequence, let
$T_{j,t_{j,i}}\rightharpoonup T_j\in\alpha_j$.  Convolution with a
fixed kernel is continuous under weak convergence.  Letting
$i\to\infty$ and then $\varepsilon\downarrow0$ in
\eqref{eq:extracted-reserve}--\eqref{eq:extracted-Q} gives
\begin{equation}\label{eq:limit-reserve}
 T_j\ge K\sigma\gamma,
 \qquad
 Q_{\chi_j}(T_j-2\sigma\gamma)\le c
\end{equation}
in the local-convolution sense.  In the second assertion we used that
\eqref{eq:extracted-Q} holds for every constant form
$\chi_0\le\chi_j$.

It remains to make the inequality strict and return to the limiting
class.  Put $R_j:=T_j-2\sigma\gamma$.  By
\eqref{eq:limit-reserve}, $R_j\ge(K-2)\sigma\gamma$, and
\eqref{eq:uniform-upper-reference} gives
\[
 R_j+\frac{2\sigma}{N}\chi_j\le T_j.
\]
Apply \cref{lem:elementary} to the absolutely continuous parts and then
use \cref{lem:ac-cone}.  We obtain
\begin{equation}\label{eq:strict-Q-before-class-limit}
 Q_{\chi_j}(T_j)
 \le \frac{c}{1+2\sigma c/(nN)}=:c_0<c,
\end{equation}
where $c_0$ is independent of all sufficiently large $j$.

Choose smooth closed forms $h_j\in\alpha-\alpha_j$ with
$\lVert h_j\rVert_{C^\infty}\to0$, for instance the harmonic
representatives with respect to $\gamma$.  For a sufficiently large fixed
$j$, put
\[
 \theta_j:=\frac{\lVert h_j\rVert_\gamma}{K\sigma}.
\]
Then $-\theta_jT_j\le h_j\le\theta_jT_j$ and $\theta_j\to0$.  Thus the
current
\[
 T:=T_j+h_j\in\alpha
\]
satisfies $T\ge(1-\theta_j)T_j$.  Choose $j$ so large that
$\theta_j\le1/2$ and
$(1-\theta_j)^{-1}c_0\le(c+c_0)/2$.  Monotonicity and homogeneity now
give, on the absolutely continuous parts,
\[
 T\ge \frac{K\sigma}{2}\gamma,\qquad
 Q_{\chi_j}(T)
 \le(1-\theta_j)^{-1}Q_{\chi_j}(T_j)
 \le\frac{c+c_0}{2}=c-\eta,
 \qquad \eta:=\frac{c-c_0}{2}>0,
\]
and \cref{lem:ac-cone} gives the local-convolution statement. Finally take
$\lambda:=\delta_j$, so that $\chi+\lambda\gamma=\chi_j$, and set
$a:=K\sigma/2$.  This proves \textup{(i)} for $F=Q$; the case $F=P$
follows from $P\le Q$.
\end{proof}